\numberwithin{equation}{section}
\newtheorem{Thm}{Theorem}[section]
\newtheorem{Lem}[Thm]{Lemma}
\newtheorem{Def}[Thm]{Definition}
\newtheorem{Cor}[Thm]{Corollary}
\newtheorem{Prop}[Thm]{Proposition}
\newtheorem{Ex}[Thm]{Example}
\newtheorem{Rem}[Thm]{Remark}
\newcommand{\old}[1]{{\color{red}#1}}
\newcommand{\new}[1]{{\color{blue}#1}}
\title[An explicit construction of simple-minded systems]{An explicit construction of simple-minded systems over self-injective Nakayama algebras}
\author{Jing Guo, Yuming Liu, Yu Ye and Zhen Zhang}
\address{Jing Guo
\newline School of Mathematical Sciences
\newline University of Science and Technology of China
\newline Hefei, Anhui 230026
\newline P.R.China}
\email{gjws@mail.ustc.edu.cn}
\address{Yuming Liu
\newline School of Mathematical Sciences
\newline Laboratory of Mathematics and Complex Systems
\newline Beijing Normal University
\newline Beijing 100875
\newline P.R.China}
\email{ymliu@bnu.edu.cn}
\address{Yu Ye
\newline School of Mathematical Sciences
\newline Wu Wen-Tsun Key Laboratory of Mathematics
\newline University of Science and Technology of China
\newline Hefei, Anhui 230026
\newline P.R.China}
\email{yeyu@ustc.edu.cn}
\address{Zhen Zhang
\newline School of Mathematical Sciences
\newline Beijing Normal University
\newline Beijing 100875
\newline P.R.China}
\email{zhangzhen@mail.bnu.edu.cn}
\date{version of \today}
\newcommand{\lra}{\longrightarrow}
\newcommand{\ra}{\rightarrow}
\newcommand{\sdp}{\times\kern-.2em\vrule height1.1ex depth-.05ex}
\newcommand{\epi}{\lra \kern-.8em\ra}
\newcommand{\stmod}{\underline{\mathrm{mod}}}
\newcommand{\Hom}{\mathrm{Hom}}
\newcommand{\stHom}{\underline{\mathrm{Hom}}}
\newcommand{\stind}{\underline{\mathrm{ind}}}
\begin{document}
\renewcommand{\thefootnote}{\alph{footnote}}
\renewcommand{\thefootnote}{\alph{footnote}}
\setcounter{footnote}{-1} \footnote{\it{Mathematics Subject
Classification(2010)}: 16G20, 11Bxx.}
\renewcommand{\thefootnote}{\alph{footnote}}
\setcounter{footnote}{-1}
\footnote{\it{Keywords}: Non-crossing partition; Self-injective Nakayama algebra; Sms of long-type; Sms of short-type.}

\begin{abstract}
Recently, we obtained in \cite{GLYZ} a new  characterization for an orthogonal system to be a simple-minded system in the stable module category of any representation-finite self-injective algebra. In this paper, we apply this result to give an explicit construction of simple-minded systems over self-injective Nakayama algebras.

\end{abstract}

\maketitle

\section{Introduction}

In her famous work on classification of representation-finite self-injective algebras $A$ over an algebraically
closed field $k$, Riedtmann defined the notion of (combinatorial) configurations in the stable Auslander-Reiten
quiver $_s\Gamma_A$ of $A$. It turns out that the configurations of $_s\Gamma_A$ precisely correspond to simple-minded systems (sms for short) of the stable module category $A$-$\stmod$ (cf. \cite{CKL}). According to Riedtmann and her collaborators' work (\cite{Riedtmann2}, \cite{Riedtmann4}, \cite{Riedtmann3}, \cite{BLR}), the classification of sms's over any representation-finite self-injective algebra has been theoretically completed. In particular, if $A$ is the self-injective
Nakayama algebra with $n$ simple modules and Loewy length $\ell+1$, then the sms's of $A$-$\stmod$ are
classified by $\tau^n$-stable Brauer relations of order $\ell$.
Recently, Chan \cite{Chan} gave a new classification of sms's over
self-injective Nakayama algebras in terms of two-term tilting
complexes.

Both Riedtmann's and Chan's classifications are implicit and it is not easy to write down the sms's explicitly from these classifications. In the present paper, we give an explicit construction of sms's over self-injective Nakayama algebras. Our construction depends on a new characterization of sms's over representation-finite self-injective algebras in \cite{GLYZ} and a description (see Proposition \ref{orthogonal-condition} below) of orthogonality condition in the stable module category over any self-injective Nakayama algebra. We briefly state our main result as follows. Let $A$ be the self-injective
Nakayama algebra with $n$ simple modules and Loewy length $\ell +1$, and let $\mathcal{P}$ be the non-crossing partitions of the set $\underline{e}:=\{1,2,\cdots,e\}$, where $e$ is the greatest common divisor of $n$ and $\ell$. For each pair $(p,k)$ where $p\in \mathcal{P}$ and $k\in \underline{e}$, we construct two explicit families $\mathcal{L}'_{p,k}$ and $\mathcal{S}'_{p,k}$ of $A$-modules, and we prove that these families consist of a complete set of sms's over $A$ (see Theorem \ref{sms-description} and Theorem \ref{self-sms-description}). The virtue of our construction is that one can  directly write down the modules in the sms's from non-crossing partitions.

This paper is organized as follows. In Section 2,
we recall some notions and facts on sms's and on self-injective Nakayama algebras. In Section 3, we introduce the arc of indecomposable module over any symmetric Nakayama algebra and use it to describe the orthogonality in the corresponding stable module category. In Section 4, we introduce the non-crossing partitions and give an explicit construction of sms's over self-injective Nakayama algebras. In the last section, we study the behavior of our construction under (co)syzygy functor.

\section*{ Acknowledgements} The authors are supported by NSFC (No.11331006, No.11431010, No.11571329). We would like to thank Steffen Koenig and Aaron Chan for comments and many suggestions on the presentation of this paper. The first author would like to thank China Scholarship Council for supporting her study at the University of Stuttgart and also wish to thank the representation theory group in Stuttgart for hospitality at the same time. We are very grateful to the referee for valuable suggestions and comments, which have improved much on the presentation of this paper.

\section{Preliminaries}
Throughout this paper all algebras will be finite dimensional algebras over an algebraically closed field $k$. For all the details on representations of algebras and quivers we refer to \cite{ASS}. For an algebra $A$, we denote by $A$-mod the category of finite dimensional (left) $A$-modules. For any $A$-module $M$,
we denote by soc$(M)$ and rad$(M)$ the socle and the radical of $M$, respectively. We shall use the following notations: $rad^{0}(M):= M$,
$rad^{k+1}(M):=rad(rad^{k}(M))$ for $k\in\mathbb N$ and $top(M):=M/rad(M)$. Recall that the stable module category $A$-$\stmod$ of $A$-mod has the same objects as $A$-mod but the morphism space between two objects $M$ and $N$ is a quotient space
$\stHom_A(M,N):=\Hom_A(M,N)/\mathcal{P}(M,N)$, where $\mathcal{P}(M,N)$ is the subspace of $\Hom_A(M,N)$
consisting of those homomorphisms from $M$ to $N$ which factor through a projective $A$-module.

The notion of simple-minded system (sms for
short) was introduced by Koenig and Liu (see \cite{KL}) in the stable module category $A$-$\stmod$ of any finite dimensional algebra $A$. It was shown in \cite{KL} that when $A$ is
representation-finite self-injective, the sms's in
$A$-$\stmod$ can be defined as follows.

\begin{Def}{\rm(cf. \cite[Theorem 5.6]{KL})}
	Let $A$ be a representation-finite self-injective algebra. A family of objects $\mathcal{S}$ in $A$-$\stmod$ is an sms if and only if the following conditions are satisfied.	
	\begin{enumerate}[$(1)$]
		\item For any two objects $S, T$ in $\mathcal{S}$, $\stHom_A(S,T)\cong \left\{\begin{array}{ll} 0 & (S\neq T), \\
		k & (S=T).\end{array}\right.$
		\item For any indecomposable non-projective
		$A$-module $X$, there exists $S$ in $\mathcal{S}$ such that $\stHom_A(X,S)\neq 0$.
	\end{enumerate}
\end{Def}

Recently, we obtained in \cite{GLYZ} a new characterization of sms's over representation-finite self-injective algebras. To state the characterization, we first introduce the following definition.

	\begin{Def} {\rm(cf. \cite[Definition 2.1]{GLYZ})}\label{orthogonal-def}
	Let $A$ be a self-injective algebra and $M$ an indecomposable $A$-module. $M$ is a stable brick if $\underline{\Hom}_{A}(M,M)\cong k$. A set $S$ of stable bricks in $A$-$\underline{mod}$ is an orthogonal system if $\underline{\Hom}_{A}(M,N)=0$ for all distinct stable bricks $M$, $N$ in $S$.
\end{Def}

\begin{Thm}{\rm(\cite[Theorem 3.1]{GLYZ})}\label{main-thm}
	Let $A$ be a representation-finite self-injective  algebra and $\mathcal{S}$ a family of objects in $A$-$\stmod$. Then $\mathcal{S}$ is an sms if and only if $\mathcal{S}$ satisfies the following three conditions.
	\begin{enumerate}[$(1)$]
		\item $\mathcal{S}$ is an orthogonal system in $A$-$\stmod$.
		\item The cardinality of $\mathcal{S}$ is equal to the number of non-isomorphic non-projective simple $A$-modules.
		\item $\mathcal{S}$ is Nakayama-stable, that is, the Nakayama functor $\nu$ permutes the objects of $\mathcal{S}$.
	\end{enumerate}
\end{Thm}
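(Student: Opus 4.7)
The statement splits into necessity and sufficiency.

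For necessity, each of (1), (2), (3) reduces to a known fact. Condition (1) is immediate from the definition, since in an sms distinct objects have vanishing stable $\Hom$ and each object is a stable brick. Condition (2) is the classical cardinality theorem for sms's over representation-finite self-injective algebras, obtained through the bijection between sms's in $A$-$\stmod$ and Riedtmann configurations of ${}_s\Gamma_A$ recalled in the introduction. Condition (3) combines two ingredients: $\Omega$-closure of sms's, and the $\tau$-stability of Riedtmann configurations. Since on the stable category of a self-injective algebra $\tau\cong\nu\Omega^{2}$, closure under $\Omega$ and $\tau$ together yield closure under $\nu$.

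The substance lies in the sufficiency direction. Assume $\mathcal{S}$ satisfies (1)--(3); I must show that for each indecomposable non-projective $X$ there is some $S\in\mathcal{S}$ with $\stHom_A(X,S)\neq 0$. The plan is to invoke Auslander--Reiten duality on $A$-$\stmod$, which for self-injective $A$ reads
\begin{equation*}
D\,\stHom_A(X,Y)\;\cong\;\stHom_A\bigl(Y,\,\nu\Omega X\bigr),\qquad D=\Hom_k(-,k),
\end{equation*}
together with the Nakayama-stability of $\mathcal{S}$. Defining $\mathcal{X}:=\{X : \stHom_A(X,S)=0 \text{ for all } S\in\mathcal{S}\}$, the AR formula combined with $\nu(\mathcal{S})=\mathcal{S}$ quickly yields that $\mathcal{X}$ is $\nu$-stable and enjoys reasonable closure properties under $\tau=\nu\Omega^{2}$.

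The task then reduces to showing that $\mathcal{X}$ contains no non-projective indecomposable. The intended argument is by maximality in the stable Auslander--Reiten quiver: if such an $X$ existed, one could extract from its $\nu\Omega$-orbit new stable bricks orthogonal to every object of $\mathcal{S}$, and enlarging $\mathcal{S}$ by (a suitable Nakayama-closed subfamily of) them would produce a Nakayama-stable orthogonal family strictly larger than $\mathcal{S}$, contradicting the cardinality bound (2). In other words, (2) is to be read as saying that the number of non-projective simples is the maximum possible size of a Nakayama-stable orthogonal system of stable bricks in $A$-$\stmod$, and that saturation of this bound forces the sms generating property.

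The main obstacle is this last combinatorial step. The AR-duality reductions only produce closure of $\mathcal{X}$ under the standard functors, which is essentially formal; the substantive input is that the cardinality in (2) genuinely is the maximum such cardinality, and that saturation of this bound forces generation. This is where representation-finiteness enters in an essential way, through the detailed shape of the stable Auslander--Reiten quiver and the combinatorics of Riedtmann configurations; in particular, one needs enough control over the $\tau$-orbits of $_s\Gamma_A$ to guarantee that a hypothetical element of $\mathcal{X}$ really does yield new orthogonal bricks rather than merely duplicating existing elements of $\mathcal{S}$.
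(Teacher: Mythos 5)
This theorem is quoted in the paper from \cite{GLYZ} and is not proved in the present text, so there is no in-paper argument to compare against; I can only assess your sketch on its own terms, and it does not hold up.

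In the necessity direction, (1) and (2) are attributed correctly, but the argument you give for (3) is wrong. Individual sms's are neither $\Omega$-stable nor $\tau$-stable as sets: the paper itself shows in Section~5 that $\Omega(\mathcal{S}_{p,i})=\mathcal{L}_{p',i}$, a \emph{different} sms, and consequently (over $A_n^{dn}$, where $\nu\cong\mathrm{id}$ so $\tau\cong\Omega^2$) that $\tau$ also moves one sms to another. So the proposed deduction ``closure under $\Omega$ and $\tau$, combined with $\tau\cong\nu\Omega^2$, gives closure under $\nu$'' starts from two false premises, and the very example in the paper shows that $\nu$-stability must be established by some other route (e.g.\ via Serre/AR duality applied to the generation property), not as a corollary of $\Omega$- and $\tau$-stability.

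The substantive problem is in the sufficiency direction, which you yourself flag as unresolved. Your plan --- define $\mathcal{X}=\{X:\stHom_A(X,S)=0\text{ for all }S\in\mathcal{S}\}$, use Serre duality plus $\nu(\mathcal{S})=\mathcal{S}$ to get closure properties of $\mathcal{X}$, and derive a contradiction with (2) --- is sensible in outline, but the step that carries all the weight is missing. You reinterpret (2) as saying that the number of non-projective simples is the \emph{maximum} size of a Nakayama-stable orthogonal system of stable bricks and that saturating this bound forces generation; that reinterpretation is essentially the content of the theorem itself, so invoking it as a known fact is circular. You also do not explain why a hypothetical $X\in\mathcal{X}$ would furnish a new stable brick orthogonal to all of $\mathcal{S}$ in both directions and with a $\nu$-orbit disjoint from $\mathcal{S}$, nor where representation-finiteness enters concretely. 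As written, the proposal stops precisely where the real work begins.
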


Now we specialize our discussion to self-injective Nakayama algebras. We denote by $A_n^\ell$ the self-injective Nakayama algebra with
$n$ simples and Loewy length $\ell+1$, where $n,\ell$ are positive integers. More precisely, $A_n^\ell=kQ/I$ is given by the following quiver $Q$
$$\xymatrix@rd{
	1\ar@/^/[r]&2\ar@/^/[d]\\
	n\ar@/^/[u]&\cdots\ar@/^/[l] }$$ with admissible ideal $I=rad^{\ell+1}(kQ)$. It is known that $A_n^\ell$ is representation-finite, see \cite[V. 3. Theorem 3.5]{ASS}.

Let $A=A_n^\ell$ be a self-injective Nakayama algebra defined as above. As usual, we denote by $D$, $\nu$, $\Omega$, and $\tau=DTr$ the $k$-dual functor, the Nakayama functor, the syzygy functor and the Auslander-Reiten translate of $A$, respectively. Let $S_1,S_2,\cdots,S_n$ be the simple $A$-modules corresponding to the vertices $1,2,\cdots,n$ of the quiver $Q$. For any indecomposable $A$-module $M$, the Loewy length of $M$ is denoted by $\ell(M)$ and it means the number of composition factors in any composition series of $M$. Notice that any indecomposable $A$-module $M$ is uniserial and completely determined up to isomorphism by $top(M)$, $soc(M)$ and $\ell(M)$. We write $M=M^i_{j,k}$ to indicate that $top(M)$ is isomorphic to $S_i$, $soc(M)$ is isomorphic to $S_j$, and the multiplicity of $S_i$ in $M$ is $k+1$ (that is, the number of composition factors of $M$ which are isomorphic to $S_i$ is $k+1$). Moreover, the dimension of $M^i_{j,k}$ is $nk+[j-i)+1$ as vector space, where $[j-i)$ is the smallest non-negative integer with $[j-i)$=$(j-i)$ mod $n$. If $i<j$, then the dimensional vector of $M^i_{j,k}$ is
$(k,k,\cdots,k,k+1,k+1,\cdots,k+1,k,k,\cdots,k)$, where $k+1$ appears from position $i$ to position $j$. If $i>j$, then the dimensional vector of $M^i_{j,k}$ is $(k+1,k+1,\cdots,k+1,k,\cdots,k,k+1,k+1,\cdots,k+1)$, where $k+1$ appears from position $1$ to position $j$ and from position $i$ to position $n$. If $i=j$, then the dimensional vector of $M^i_{j,k}$ is  $(k,k,\cdots,k,k+1,k,k,\cdots,k)$, where $k+1$ appears in position $i$. In the following, we will freely use the above notation or a Loewy diagram as in Example \ref{p-ex} to express an indecomposable $A_n^\ell$-module.

The Nakayama functor $\nu$ of $A$ is important in the present paper and we give a description for it in the following two lemmas.

\begin{Lem}\label{Nakayama-functor}
	Let $A_n^\ell=kQ/I$ be a self-injective Nakayama algebra. If $M$ is
	an indecomposable non-projective $A_n^\ell$-module, then
	$\nu(M)\cong \tau^{-\ell}(M)$.
\end{Lem}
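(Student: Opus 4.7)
The plan is to combine the standard formula $\tau\cong \Omega^2\nu$ (valid on the stable module category of any self-injective algebra) with an explicit uniserial computation of $\Omega^2$ and of the Nakayama permutation of $A_n^\ell$. Concretely, it is enough to establish $\tau^{\ell+1}\cong \Omega^2$ in $A$-$\stmod$, since combined with $\tau\cong \Omega^2\nu$ this yields $\nu\cong \tau\Omega^{-2}\cong\tau\cdot\tau^{-(\ell+1)}=\tau^{-\ell}$.

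For the general identification $\tau\cong \Omega^2\nu$, I would recall that for a self-injective algebra the Nakayama functor $\nu=D\Hom_A(-,A)$ is an exact autoequivalence, so applying $\nu$ to a minimal projective presentation $P_1\to P_0\to M\to 0$ produces a projective presentation of $\nu M$ whose second syzygy is $\ker(\nu P_1\to \nu P_0)$; a direct comparison with $\tau M=D\,\mathrm{Tr}\,M$ (which is by definition the same kernel) gives the desired natural isomorphism in $A$-$\stmod$. For the computation on $A=A_n^\ell$, the uniserial structure $\mathrm{soc}(P_i)\cong S_{i+\ell}$ (indices read modulo $n$) yields the Nakayama permutation $\sigma(i)=i-\ell$, hence $\nu(M^i_{j,k})\cong M^{i-\ell}_{j-\ell,\,k}$ by exactness of $\nu$. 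Similarly, the kernel of the projective cover $P_i\to M^i_{j,k}$ is uniserial with top $S_{j+1}$ and socle $S_{i+\ell}$, and iterating once more gives $\Omega^2(M^i_{j,k})\cong M^{i+\ell+1}_{j+\ell+1,\,k}$. Combining, $\tau(M^i_{j,k})\cong \Omega^2\nu(M^i_{j,k})\cong M^{i+1}_{j+1,\,k}$, so $\tau$ preserves Loewy length and shifts both top and socle indices by $+1$; iterating this formula $\ell$ times then produces $\tau^{-\ell}(M^i_{j,k})\cong M^{i-\ell}_{j-\ell,\,k}\cong \nu(M^i_{j,k})$.

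The iteration is legal because $\tau$ preserves Loewy length and every non-projective indecomposable over $A_n^\ell$ has Loewy length at most $\ell$, so $\tau^{-s}(M)$ remains non-projective for all $0\le s\le\ell$. The main obstacle is really only bookkeeping: one must keep track of indices modulo $n$ consistently in the uniserial description, and one must be careful that $[j-i)$ is the correct residue so that the two shifts by $+\ell+1$ and by $-\ell$ compose to $+1$ as claimed. Once these conventions are fixed, the matching of tops, socles, and Loewy lengths is immediate and no deeper argument is required.
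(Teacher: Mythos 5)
Your proof is correct, but it takes a genuinely different and more explicit route than the paper's. The paper simply remarks that one verifies $\nu(S_i)\cong\tau^{-\ell}(S_i)$ on simple modules and then ``extends'' using that $\nu$ is a self-equivalence of $A$-mod; the extension step is left to the reader and amounts to observing that on the stable AR quiver $\mathbb{Z}A_\ell/\langle\tau^n\rangle$ both $\nu$ and $\tau^{-\ell}$ are $\tau$-commuting automorphisms, hence agree everywhere once they agree on one $\tau$-orbit. You instead invoke the general self-injective identity $\tau\cong\Omega^2\nu$, compute the Nakayama permutation $\nu(M^i_{j,k})\cong M^{i-\ell}_{j-\ell,k}$ and the double syzygy $\Omega^2(M^i_{j,k})\cong M^{i+\ell+1}_{j+\ell+1,k}$ directly from the uniserial structure (with $\mathrm{soc}(P_i)\cong S_{i+\ell}$ and $\mathrm{top}(\Omega M^i_{j,k})\cong S_{j+1}$, both consistent with the paper's radical-series conventions), and combine to get $\tau(M^i_{j,k})\cong M^{i+1}_{j+1,k}$, which you then iterate. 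Your iteration is legal for exactly the reason you give: over a self-injective algebra $\tau^{-1}$ always carries non-projective indecomposables to non-projective indecomposables, and Loewy length $\le\ell$ is preserved throughout. The computational route is longer but is self-contained and yields the useful by-product formula $\tau(M^i_{j,k})\cong M^{i+1}_{j+1,k}$ (equivalently $\tau^{\ell+1}\cong\Omega^2$ on $A_n^\ell$), which the paper's sketch does not make explicit.
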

\begin{proof}
We can easily verify this result for simple modules and then extend it to all indecomposable non-projective modules since $\nu$ is a self-equivalence over $A$-mod.
\end{proof}

\begin{Lem}\label{orbit}
	Let $M$ be an indecomposable non-projective $A_n^\ell$-module. We denote by $O_{\nu}(M)$ the $\nu$-orbit of $M$. Then the number of objects in $O_{\nu}(M)$ is $n/e$ and $O_{\nu}(M)=\{M, \tau^{-e}(M), \cdots, \tau^{-n+e}(M)\}$, where $e$ is the greatest common divisor of $n$ and $\ell$.
\end{Lem}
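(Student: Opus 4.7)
The plan is to combine Lemma \ref{Nakayama-functor} with the explicit description of the Auslander--Reiten translate on indecomposables of $A_n^\ell$, and then reduce the question to an elementary counting argument in $\mathbb{Z}/n\mathbb{Z}$.

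First I would use Lemma \ref{Nakayama-functor} to rewrite $\nu^s(M) \cong \tau^{-s\ell}(M)$ for every $s \geq 0$. Next I would recall that for a self-injective Nakayama algebra the AR-translate acts on indecomposables as a cyclic shift of the indices: $\tau(M^i_{j,k}) \cong M^{i-1}_{j-1,k}$ (indices taken modulo $n$). From this it follows that the $\tau$-orbit of any indecomposable non-projective $A_n^\ell$-module has period exactly $n$, so $\tau^{-a}(M) \cong \tau^{-b}(M)$ if and only if $a \equiv b \pmod{n}$.

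Combining these two observations, the cardinality of $O_\nu(M)$ equals the smallest positive integer $s$ with $s\ell \equiv 0 \pmod{n}$, which is precisely $n/e$ with $e = \gcd(n,\ell)$. To identify the orbit with $\{M, \tau^{-e}(M), \dots, \tau^{-n+e}(M)\}$, I would write $\ell = e\ell'$ and $n = en'$ with $\gcd(n',\ell')=1$, and observe that as $s$ runs through $0,1,\dots,n'-1$, the residues $s\ell \bmod n = e(s\ell' \bmod n')$ run through $\{0, e, 2e, \dots, (n'-1)e\}$ exactly once, because multiplication by $\ell'$ is a bijection on $\mathbb{Z}/n'\mathbb{Z}$. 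Translating this set of exponents back through $\tau^{-\bullet}(M)$ gives the claimed listing of $O_\nu(M)$.

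The main obstacle is really just bookkeeping: one must justify carefully that $\tau$ has period exactly $n$ on every indecomposable non-projective $A_n^\ell$-module (not just a dividing period), so that the $\nu$-orbit count is genuinely $n/e$ and not a proper divisor of it. This follows directly from the shift description of $\tau$ recorded after Lemma \ref{Nakayama-functor}, so once that is invoked the rest of the argument is an immediate number-theoretic identification.
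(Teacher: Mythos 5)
Your argument is correct and follows essentially the same route as the paper: both invoke Lemma \ref{Nakayama-functor} to identify the $\nu$-orbit with $\{\tau^{-s\ell}(M)\}_{s\geq 0}$ and then reduce to the elementary observation that the multiples of $\ell$ modulo $n$ are precisely the multiples of $e=\gcd(n,\ell)$, taken $n/e$ times. One minor caveat: you cite a shift formula for $\tau$ as "recorded after Lemma \ref{Nakayama-functor}," but the paper never explicitly states it there --- both your proof and the paper's implicitly rely on the standard fact that $\tau$ acts as a cyclic index shift with period exactly $n$ on indecomposable non-projective $A_n^\ell$-modules, so this is a citation slip rather than a mathematical gap.
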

\begin{proof}
	By Lemma \ref{Nakayama-functor}, we have $O_{\nu}(M)=\{M, \tau^{-\ell}(M),\cdots,\tau^{-(k-1)\ell}(M)\}$, where $k$ is the minimum positive integer such that $n$ divides $k\ell$. Since $n/e$ and $\ell/e$ are coprime, we have $k=n/e$. Thus, the number of objects in $O_{\nu}(M)$ is $n/e$ and $O_{\nu}(M)=\{M, \tau^{-e}(M),\cdots,\tau^{-n+e}(M)\}$.
\end{proof}

In the rest of this section, we prove several elementary results on homomorphism spaces in the stable category of a self-injective Nakayama algebra. For $f\in \Hom_A(M,N)$, we will denote its image in
$\stHom_A(M,N)$ by $\underline{f}$.
\begin{Lem}\label{zero-morphism}
	Let $A=A_n^\ell$ be a self-injective Nakayama algebra, and let $M$, $N$ be two indecomposable non-projective $A$-modules. Suppose that there exists a nonzero morphism $f\in \Hom_A(M,N)$ satisfying $Imf=rad^t(N)$. Then $\underline{f}=0$ if and only if $\ell(M)+t\geq \ell+1$. In particular, if we denote by $i$ (respectively $j$) the multiplicity  in $top(N)$ in $N/rad^t(N)$ (respectively $M$), then $\underline{f}=0$ implies $i+j\geq [\ell/n]+1$, where $[\ell/n]$ is the maximum integer of no more than $\ell/n$.
\end{Lem}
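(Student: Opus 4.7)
The plan is to reduce the factorization condition on $f$ to an extension problem along the inclusion of $M$ into its injective envelope. Since $A$ is self-injective, every indecomposable projective is also injective; in particular the injective envelope of $M$, which I will denote $I_M$, is an indecomposable projective $A$-module of Loewy length $\ell+1$ containing $M$ as the unique submodule of Loewy length $\ell(M)$ with the same socle, so $M = \mathrm{rad}^{\ell+1-\ell(M)}(I_M)$. The first step is to prove that $\underline{f}=0$ if and only if $f$ extends along the inclusion $\iota\colon M \hookrightarrow I_M$ to some $\tilde{f}\colon I_M \to N$. One direction is immediate: any such $\tilde{f}$ exhibits $f=\tilde{f}\circ\iota$ as a factorization through the projective $I_M$. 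For the other direction, given $f=gh$ with $h\colon M\to Q$, $g\colon Q\to N$ and $Q$ projective (equivalently, injective), the map $h$ extends along $\iota$ to some $\tilde{h}\colon I_M\to Q$, and $\tilde{f}:=g\tilde{h}$ is an extension of $f$.

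The next step is to compute when such an extension exists, using that both $I_M$ and $N$ are uniserial. Writing $\mathrm{top}\,N=S_b$, we have $\mathrm{top}\,M=S_{b+t}$ (indices mod $n$), and consequently $\mathrm{top}\,I_M = S_{b+t-(\ell+1-\ell(M))}=S_{b+s}$ where $s:=t+\ell(M)-(\ell+1)$. A nonzero morphism $I_M\to N$ is determined up to scalar by its image, which is some $\mathrm{rad}^{s'}(N)$; the condition that the restriction to $M=\mathrm{rad}^{\ell+1-\ell(M)}(I_M)$ land surjectively onto $\mathrm{rad}^t(N)$ forces $s'=s$. Provided $s\geq 0$, the composition $I_M\twoheadrightarrow\mathrm{rad}^{s}(N)\hookrightarrow N$ is a genuine morphism whose restriction to $M$ is nonzero (this uses $t<\ell(N)$, guaranteed by $\mathrm{Im}\,f\neq 0$), and since the subspace of $\Hom_A(M,N)$ consisting of maps with image $\mathrm{rad}^t(N)$ is one-dimensional, this restriction is a nonzero scalar multiple of $f$; rescaling yields an extension of $f$ itself. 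If $s<0$, no $\mathrm{rad}^{s}(N)$ exists and no extension is possible. Combining the two steps gives $\underline{f}=0 \Leftrightarrow \ell(M)+t\geq \ell+1$.

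For the ``in particular'' statement, I count composition factors isomorphic to $S_b=\mathrm{top}\,N$. The composition factors of $N/\mathrm{rad}^t(N)$ are $S_b,S_{b+1},\dots,S_{b+t-1}$ and those of $M$ are $S_{b+t},S_{b+t+1},\dots,S_{b+t+\ell(M)-1}$, all indices mod $n$; concatenated they form $t+\ell(M)$ consecutive simples beginning with $S_b$, in which $S_b$ occurs exactly $\lceil(t+\ell(M))/n\rceil$ times. Under the hypothesis $t+\ell(M)\geq\ell+1$ this yields $i+j\geq\lceil(\ell+1)/n\rceil$, and a case split on whether $n\mid\ell$ shows $\lceil(\ell+1)/n\rceil=[\ell/n]+1$ in both cases.

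The main delicate point is the equivalence set up in the first paragraph: both implications rely essentially on self-injectivity, first to ensure $I_M$ is itself projective, and second so that any projective $Q$ appearing in a factorization of $f$ is also injective and thus can receive $h$ extended along $\iota$. A secondary check is that the candidate extension constructed in the second paragraph does not accidentally vanish on $M$, which is precisely where the inequality $t<\ell(N)$ is used. The final combinatorial step is then a routine count.
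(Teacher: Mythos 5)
Your proof is correct and dual to the paper's. The paper factors $f$ through the projective cover $P_N\twoheadrightarrow N$ of the \emph{target} and reads off $\ell(M)\geq\ell(\mathrm{rad}^t(P_N))=\ell+1-t$, while you extend $f$ along the inclusion of the \emph{source} $M$ into its injective (hence projective) envelope $I_M$ and track $\mathrm{top}(I_M)$ against $\mathrm{top}(N)$; both reduce stable vanishing to the same inequality $\ell(M)+t\geq\ell+1$. Your first paragraph, establishing that ``factors through some projective'' is equivalent to ``extends along $M\hookrightarrow I_M$,'' is a clean justification of a reduction the paper leaves tacit, and you also prove the ``in particular'' clause explicitly by counting composition factors, which the paper states without argument.

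One small imprecision in your $\Longleftarrow$ direction: the maps $M\to N$ with image $\mathrm{rad}^t(N)$ do not form a one-dimensional subspace, and two surjections of $M$ onto $\mathrm{rad}^t(N)$ differ by an automorphism of $\mathrm{rad}^t(N)$, which when $\ell(N)-t>n$ is a nonzero scalar \emph{plus a nilpotent endomorphism}; so ``rescaling'' alone does not literally recover $f$ from $\tilde f|_M$. The conclusion nevertheless holds, because any automorphism $\alpha$ of $\mathrm{rad}^t(N)$ extends to an endomorphism $\gamma$ of $N$ (for uniserial $N$ the restriction map $\Hom_A(N,N)\to\Hom_A(\mathrm{rad}^t(N),\mathrm{rad}^t(N))$ is surjective), and then $f=\gamma\circ\tilde f|_M=(\gamma\tilde f)|_M$ exhibits $f$ as the restriction of $\gamma\tilde f\colon I_M\to N$, hence stably zero. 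The paper's corresponding assertion --- that one can choose $g\colon M\to P_N$ with $\mathrm{Im}\,g=\mathrm{rad}^t(P_N)$ and $f=\pi g$ on the nose --- hides exactly the same subtlety.
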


\begin{proof}
	``$\Longrightarrow$"
	Since $\underline{f}=0$, we have the following commutative diagram in $A$-mod:	
	$$\xymatrix{
		M \ar@//[dr]_g&  & N &\\
		& P_N\ar[ur]_{\pi} & ,
		\ar@{->}^{f}"1,1"; "1,3"
	}
	$$
	\noindent where $\pi$ is the projective cover of $N$. Then $Img=rad^t(P_N)$ and $\ell(M)\geq \ell(rad^t(P_N))=\ell(P_N)-t=\ell+1-t$, that is, $\ell(M)+t\geq\ell+1$.
	
	``$\Longleftarrow$'' Suppose that $\ell(M)+t\geq\ell+1$ and let $\pi\colon P_N\longrightarrow N$ be the projective cover of $N$. Then we can define a morphism $g$ from $M$ to $P_N$ satisfying $Img=rad^t(P_N)$ and $f=\pi g$, that is, $f$ factors through a projective module.
\end{proof}
\begin{Rem}\label{rem}
	Notice that $i+j\geq [\ell/n]+1$ is not a necessary and sufficient condition for $\underline{f}=0$ in general. However, if $A_n^\ell$ is a symmetric Nakayama algebra (that is, there exists an integer $d$ such that $\ell=dn$), then the condition $i+j\geq d+1$ is a necessary and sufficient condition for $\underline{f}=0$.
\end{Rem}
\begin{Lem}\label{easy-observation}
	Let $M$ and $N$ be two indecomposable non-projective $A_n^\ell$-modules. Let $f\in \Hom_{A_n^\ell}(M,N)$ be a nonzero homomorphism such that $Imf=rad^t(N)$, where $t$ is an integer such that there is no epimorphisms from $M$ to $rad^s(N)$ for $s<t$. Then $\underline{f}=0$ if and only if $\stHom_{A_n^\ell}(M,N)=0$.
\end{Lem}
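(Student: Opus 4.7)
The forward direction, $\stHom_A(M,N)=0 \Rightarrow \underline{f}=0$, is trivial since $\underline{f}$ lies in $\stHom_A(M,N)$. The plan for the converse is to exploit the uniseriality of indecomposable modules over a Nakayama algebra, together with Lemma \ref{zero-morphism}, observing that the map $f$ singled out in the hypothesis is the ``shallowest'' possible (its image sits as close to the top of $N$ as possible among nonzero homomorphisms from $M$), so once $f$ factors through a projective, every other nonzero homomorphism $M\to N$ must as well.

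To execute this, let $g\in \Hom_A(M,N)$ be an arbitrary nonzero homomorphism. Since $N$ is uniserial, its submodules are exactly the $\rad^s(N)$, so the image of $g$ equals $\rad^s(N)$ for some integer $s\geq 0$; equivalently, $g$ factors as an epimorphism $M\twoheadrightarrow \rad^s(N)$ followed by the canonical inclusion $\rad^s(N)\hookrightarrow N$. By the minimality hypothesis on $t$, no such epimorphism exists for $s<t$, so $s\geq t$.

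Now I would apply Lemma \ref{zero-morphism} twice. Applied to $f$ (whose image is $\rad^t(N)$), the hypothesis $\underline{f}=0$ gives $\ell(M)+t\geq \ell+1$. Since $s\geq t$, this upgrades to $\ell(M)+s\geq \ell+1$, and the ``if'' direction of Lemma \ref{zero-morphism} applied to $g$ yields $\underline{g}=0$. As $g\in \Hom_A(M,N)$ was arbitrary, $\stHom_A(M,N)=0$. I foresee no substantive obstacle: the entire argument is a direct reformulation of Lemma \ref{zero-morphism} once uniseriality is used to parametrize images, and the minimality of $t$ is precisely what is needed to transport the inequality $\ell(M)+t\geq\ell+1$ to every other nonzero homomorphism.
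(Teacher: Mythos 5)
Your proof is correct and follows exactly the same route as the paper's: both directions are handled the same way, and the converse uses uniseriality to write the image of any nonzero $g$ as $\rad^s(N)$ with $s\geq t$, then applies both directions of Lemma \ref{zero-morphism} to transfer $\ell(M)+t\geq\ell+1$ into $\underline{g}=0$.
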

\begin{proof}	
	``$\Longleftarrow$'' When $\stHom_{A_n^\ell}(M,N)=0$, it is clear that $\underline{f}=0$.
	
	``$\Longrightarrow$" If $\underline{f}=0$, by Lemma \ref{zero-morphism}, then $\ell(M)+t\geq \ell+1$. For any morphism $g$ in $\Hom_{A_n^\ell}(M,N)$, since there is no epimorphisms from $M$ to $rad^s(N)(s<t)$, we have $Img=rad^s(N)$ for some integer $s$, where $s\geq t$. Therefore $\ell(M)+s\geq \ell+1$, and again by Lemma \ref{zero-morphism}, $\underline{g}=0$. This shows $\stHom_{A_n^\ell}(M,N)=0$.
\end{proof}

\begin{Lem}\label{top-socle}
	Let $A_n^\ell$ be a self-injective Nakayama algebra and $M$ and $N$  two indecomposable non-projective $A_n^\ell$-modules. If $\stHom_{A_n^\ell}(M,N)=0$ and $\stHom_{A_n^\ell}(N,M)=0$, then $top(M)\ncong top(N)$ and $soc(M)\ncong soc(N)$.
\end{Lem}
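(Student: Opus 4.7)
The plan is to argue by contrapositive, treating the \emph{top} and \emph{socle} statements as formally dual applications of Lemma \ref{zero-morphism}. In each case I would assume the conclusion fails, produce a canonical morphism between $M$ and $N$, and use Lemma \ref{zero-morphism} to certify it is nonzero in the stable category.

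For the top claim, suppose $top(M)\cong top(N)\cong S_i$. Both $M$ and $N$ are uniserial quotients of the indecomposable projective cover $P_i$, so after possibly swapping $M$ and $N$ I may assume $\ell(M)\leq \ell(N)$, in which case $rad^{\ell(M)}(P_i)\supseteq rad^{\ell(N)}(P_i)$ and the canonical map gives a surjection $f\colon N\twoheadrightarrow M$ with kernel $rad^{\ell(M)}(N)$. Because $Imf=M=rad^0(M)$, Lemma \ref{zero-morphism} applied with $t=0$ says $\underline{f}=0$ iff $\ell(N)+0\geq \ell+1$, which is impossible since $N$ is non-projective (so $\ell(N)\leq \ell$). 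Hence $\underline{f}\neq 0$, contradicting $\stHom_{A_n^\ell}(N,M)=0$.

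For the socle claim, suppose $soc(M)\cong soc(N)\cong S_j$. In a self-injective Nakayama algebra both $M$ and $N$ embed uniquely into the indecomposable projective-injective hull $I$ of $S_j$. The submodules of the uniserial module $I$ are totally ordered by inclusion, so after possibly swapping we may again assume $\ell(M)\leq \ell(N)$, in which case $M\subseteq N$; inside the uniserial $N$ the unique submodule of Loewy length $\ell(M)$ is $rad^{\ell(N)-\ell(M)}(N)$, so the inclusion $f\colon M\hookrightarrow N$ has $Imf=rad^{t}(N)$ with $t=\ell(N)-\ell(M)$. Lemma \ref{zero-morphism} then gives $\underline{f}=0$ iff $\ell(M)+t=\ell(N)\geq \ell+1$, which again fails, yielding $\stHom_{A_n^\ell}(M,N)\neq 0$ and the desired contradiction.

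The only nontrivial bookkeeping is choosing the right natural morphism in each case (a surjection when the tops coincide, an inclusion when the socles coincide) and reading off the correct exponent $t$; once that is done the statement reduces in both cases to the impossibility of $\ell(N)\geq \ell+1$ for a non-projective $N$. I do not anticipate a serious obstacle — the whole argument is essentially a direct application of Lemma \ref{zero-morphism}.
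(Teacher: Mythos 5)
Your proof is correct and follows the same underlying idea as the paper's: when tops (resp.\ socles) agree, one module is a quotient (resp.\ submodule) of the other, and the resulting canonical surjection (resp.\ inclusion) cannot factor through a projective. The paper states this in one line and declares ``a contradiction,'' whereas you fill in the justification via Lemma~\ref{zero-morphism}, which is exactly the right way to make the implicit step precise.
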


\begin{proof} If $top(M)\cong top(N)$ (respectively $soc(M)\cong soc(N)$), then $M$ is a quotient module (respectively submodule) of $N$ or $N$ is a quotient module (respectively submodule) of $M$. A contradiction.
\end{proof}

We now describe when the $\nu$-orbit $O_{\nu}(M)$ of an indecomposable non-projective $A_n^\ell$-module $M$ forms an orthogonal system in $A_n^\ell$-$\stmod$.

\begin{Prop} \label{pairwise-orthogonal-stable-bricks}
	Let $A=A_n^\ell$ be a self-injective Nakayama algebra and $M$ an indecomposable non-projective $A$-module. Then the $\nu$-orbit $O_{\nu}(M)$ of $M$ is an orthogonal system in $A_n^\ell$-$\stmod$ if and only if $\ell(M)\leq e$ or $\ell+1-e\leq\ell(M)\leq \ell$, where $e$ is the greatest common divisor of $n$ and $\ell$.
\end{Prop}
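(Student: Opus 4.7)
The plan is to split on whether $L := \ell(M)$ lies in the small range $L \le e$, the large range $L \ge \ell + 1 - e$, or the excluded middle, and to collapse the large range to the small one via the syzygy functor. The reduction works because $\Omega$ is a stable self-equivalence of $A$-$\stmod$ that commutes with $\tau$; hence $\Omega(O_{\nu}(M)) = O_{\nu}(\Omega M)$, $\ell(\Omega M) = \ell + 1 - L$, and $\Omega$ preserves stable $\Hom$-spaces, so $O_{\nu}(M)$ is an orthogonal system iff $O_{\nu}(\Omega M)$ is.

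For the ``if'' direction with $L \le e$: since $e \le n$, the uniserial module $M$ of length $L \le n$ has pairwise distinct composition factors $S_i, S_{i+1}, \ldots, S_{i+L-1}$; hence $\Hom_A(M,M) = k$, and being non-projective, $M$ is a stable brick. For $1 \le c \le n/e - 1$ the translate $\tau^{-ce}(M)$ has its composition-factor arc shifted by $ce \in \{e, 2e, \ldots, n-e\}$ on the cyclic quiver $\mathbb{Z}/n$; since both $ce$ and $n - ce$ are at least $e \ge L$, this arc is disjoint from that of $M$, yielding $\Hom_A(M, \tau^{-ce}(M)) = 0$ already in $A$-mod. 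Combined with the $\Omega$-reduction, both ranges give orthogonal orbits.

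For the ``only if'' direction I take the contrapositive and assume $e + 1 \le L \le \ell - e$. If $n/e > 1$, set $N := \tau^{-(n-e)}(M)$, which is distinct from $M$ in $O_{\nu}(M)$ by Lemma \ref{orbit}. Classifying uniserial quotients of $M$ and submodules of $N$ by (top, length) and using $j \equiv i + L - 1 \pmod{n}$, a short residue calculation shows the length-$(L-e)$ quotient of $M$ is isomorphic to the length-$(L-e)$ submodule of $N$, producing a nonzero $f : M \to N$ with $\mathrm{Im}(f) = \rad^e(N)$. Lemma \ref{zero-morphism} then says $\underline{f} = 0$ iff $L + e \ge \ell + 1$, which the hypothesis $L \le \ell - e$ rules out; so $\underline{f} \ne 0$ and orthogonality fails. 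If instead $n/e = 1$, i.e., $e = n$ and $\ell = dn$ with $d \ge 2$ forced, the orbit is $\{M\}$ and I must instead defeat the brick condition: $L \ge n + 1$ forces $M$ to contain at least two copies of $S_i$, yielding a non-scalar endomorphism $g$ of $M$ with $\mathrm{Im}(g) = \rad^n(M)$, and Lemma \ref{zero-morphism} with $t = n$ together with $L + n \le dn = \ell$ gives $\underline{g} \ne 0$, so $\stHom_A(M,M)$ has dimension at least two.

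The main technical obstacle I foresee is handling $L > n$ uniformly with $L \le n$: in the wrap-around regime $M$'s composition factors repeat, so the matching of quotients of $M$ with submodules of $N$ must be done via residue classes modulo $n$, and one must verify that $r = L - e$ is indeed the largest valid image-length so that Lemma \ref{zero-morphism} is invoked at the smallest possible depth $t = e$. This is a short calculation but easy to mishandle when $L$ exceeds $n$.
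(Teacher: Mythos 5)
Your proposal is correct, and it differs from the paper's proof in two substantive ways that are worth noting. For the ``if'' direction the paper handles both length ranges directly, using Lemma \ref{zero-morphism} and Lemma \ref{easy-observation} to show the relevant stable maps vanish when $\ell+1-e\le\ell(M)\le\ell$; you instead collapse that range onto $\ell(M)\le e$ via the syzygy autoequivalence, using $\ell(\Omega M)=\ell+1-\ell(M)$ and the fact that $\Omega$ commutes with $\nu$ and $\tau$. This is a clean reduction and your arc-disjointness argument in the small range (both $ce$ and $n-ce$ are at least $e\ge\ell(M)$ for $1\le c\le n/e-1$) is a correct and slightly more explicit version of the paper's one-line observation about composition factors. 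For the ``only if'' direction the paper's argument considers $f\colon M\to\tau^{-e}(M)$ with $\mathrm{Im}\,f=\rad^e(\tau^{-e}(M))$ and invokes $\stHom_A(M,\tau^{-e}(M))=0$; this is silent about the degenerate case $e=n$, where $\tau^{-e}(M)\cong M$ and the hypothesis gives $\stHom_A(M,M)\cong k$ rather than $0$. You handle that case explicitly (and correctly) by defeating the stable brick condition: when $n/e=1$ and $n+1\le\ell(M)\le\ell-n$, the non-isomorphism $g$ with $\mathrm{Im}\,g=\rad^n(M)$ has $\underline{g}\ne0$ by Lemma \ref{zero-morphism}, so $\dim_k\stHom_A(M,M)\ge2$. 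This makes your treatment of the backward implication more careful than the paper's.

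Two minor remarks. First, in the $n/e=1$ branch the non-emptiness of the range $n+1\le\ell(M)\le(d-1)n$ actually forces $d\ge3$, not merely $d\ge2$; this has no effect on the argument since you only use $L+n\le dn$. Second, your closing worry about whether $t=e$ is the minimal exponent with $\mathrm{Im}\,f=\rad^t(N)$ is unnecessary: Lemma \ref{zero-morphism} gives $\underline{f}=0\Leftrightarrow\ell(M)+t\ge\ell+1$ for any nonzero $f$ with image $\rad^t(N)$, without a minimality hypothesis, so exhibiting one $f$ with $\underline{f}\ne0$ already breaks orthogonality. (Minimality is only needed in Lemma \ref{easy-observation}, which you do not use here.)
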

\begin{proof}
	``$\Longleftarrow$'' When $\ell(M)\leq e$, since any two composition
	factors of $M$ are not isomorphic and $top(M)$ is not a composition
	factor of the objects except $M$ in $O_{\nu}(M)$, it is clear that $O_{\nu}(M)$ is an orthogonal system in $A_n^\ell$-$\stmod$.
	
	When $\ell+1-e\leq\ell(M)\leq \ell$, for any object $N$ in
	$O_{\nu}(M)$, consider the morphisms $f$ from $N$ to $\tau^{-e}(N)$
	satisfying $Imf=rad^{e}(\tau^{-e}(N))$ and $g$ from $N$ to $N$
	satisfying $Img=rad^n(N)$. Notice that by
	Lemma \ref{orbit}, $\ell(N)=\ell(M)$. So by Lemma
	\ref{zero-morphism},
	$\underline{f}=0$ and $\underline{g}=0$. Furthermore, by Lemma \ref{easy-observation},
	$\stHom_{A}(N,N)\cong k$ and
	$\stHom_{A}(N,\tau^{-e}(N))=0$ if $\tau^{-e}(N)\ncong
	N$. There is a similar proof between $N$ and $\tau^{-ke}(N)$
	$(\tau^{-ke}(N)\ncong
	N, k\in\mathbb N)$. Therefore $O_{\nu}(M)$ is an orthogonal system in $A_n^\ell$-$\stmod$ because of the arbitrariness of the module
	$N$.
	
	``$\Longrightarrow$"  Consider the morphism $f$ from $M$ to $\tau^{-e}(M)$ satisfying $Imf=rad^{e}(\tau^{-e}(M))$.
	If $f=0$, then $\ell(\tau^{-e}(M))=\ell(M)\leq e$. If $f\neq0$, since $\stHom_{A}(M,\tau^{-e}(M))=0$, by Lemma \ref{zero-morphism}, we have $\ell+1-e\leq\ell(M)\leq \ell$.
\end{proof}

For any symmetric Nakayama algebra, the Nakayama functor is isomorphic to the identity functor and therefore we have the following corollary.

\begin{Cor}\label{symmetric}
	Let $A_n^{dn}=kQ/I$ be a symmetric Nakayama algebra and
	$M=M^i_{j,t}$ an indecomposable non-projective
	$A_n^{dn}$-module. Then the following are equivalent.
	
	\begin{enumerate}[$(1)$]
		\item $\stHom_{A^{dn}_n}(M,M)\cong k$, that is, $M$ is a stable brick.
		\item $\ell(M)\leq n$ or $(d-1)n+1\leq\ell(M)\leq dn$.
		\item $t=0$ or $t=d-1$.
	\end{enumerate}
\end{Cor}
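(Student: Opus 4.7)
The plan is to deduce this corollary directly from Proposition \ref{pairwise-orthogonal-stable-bricks} by specializing to the symmetric case.

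First I would observe that for the symmetric Nakayama algebra $A_n^{dn}$ the Nakayama functor $\nu$ is isomorphic to the identity, so the $\nu$-orbit $O_\nu(M)$ collapses to the singleton $\{M\}$. Thus the statement ``$O_\nu(M)$ is an orthogonal system in $A_n^{dn}\text{-}\stmod$'' is by definition the same as ``$\stHom_{A_n^{dn}}(M,M)\cong k$'', i.e.\ $M$ is a stable brick. This matches condition (1).

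Next I would compute $e=\gcd(n,dn)=n$ and feed this into Proposition \ref{pairwise-orthogonal-stable-bricks}: the orbit $O_\nu(M)$ is an orthogonal system if and only if $\ell(M)\le n$ or $dn+1-n\le \ell(M)\le dn$, that is, $\ell(M)\le n$ or $(d-1)n+1\le \ell(M)\le dn$. This is exactly condition (2), and gives the equivalence (1)$\Longleftrightarrow$(2).

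Finally, for (2)$\Longleftrightarrow$(3), I would use the dimension formula from the preliminaries: for $M=M^i_{j,t}$ one has $\ell(M)=nt+[j-i)+1$ with $0\le [j-i)\le n-1$. So $\ell(M)\in\{nt+1,nt+2,\dots,nt+n\}$. If $t=0$, then $\ell(M)\le n$; if $t=d-1$, then $(d-1)n+1\le \ell(M)\le dn$; conversely if $1\le t\le d-2$, the interval $[nt+1,nt+n]$ is strictly contained in $[n+1,(d-1)n]$, which lies outside the range in (2). (The case $t\ge d$ would force $\ell(M)\ge dn+1$, making $M$ projective, contradicting our standing assumption.) This gives (2)$\Longleftrightarrow$(3).

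There is no real obstacle here beyond bookkeeping; the only subtlety is the first step, namely invoking $\nu\cong\mathrm{id}$ to reduce the orbit-orthogonality condition to the single-brick condition, so that Proposition \ref{pairwise-orthogonal-stable-bricks} can be applied verbatim.
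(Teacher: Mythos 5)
Your proof is correct and follows exactly the route the paper intends: the corollary is stated immediately after Proposition \ref{pairwise-orthogonal-stable-bricks} with only the remark that $\nu\cong\mathrm{id}$ for symmetric Nakayama algebras, which is precisely your first step, and $e=\gcd(n,dn)=n$ then gives $(1)\Leftrightarrow(2)$. The translation $(2)\Leftrightarrow(3)$ via $\ell(M^i_{j,t})=nt+[j-i)+1$ is the routine bookkeeping the paper leaves implicit, and you have carried it out correctly, including ruling out $t\ge d$ by non-projectivity.
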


\section{The orthogonality condition for $A_n^{dn}$}

In this section, we introduce the arc for indecomposable $A_n^\ell$-modules and use it to describe the orthogonality condition in the stable module category of any symmetric Nakayama algebra.

\begin{Def}\label{Arc}
	Let $A_n^\ell=kQ/I$ be a self-injective Nakayama algebra. For any indecomposable
	$A_n^\ell$-module $M=M^i_{j,t}$, the arc of $M$ is defined to be the (unique) shortest path $\wideparen{ij}$ from the vertex $i$ to the vertex $j$ in $Q$. In particular, if $i=j$, then the arc of $M$ is the vertex $i$ in $Q$.
\end{Def}

Notice that we also regard $Q$ as an oriented geometric graph, thus the arc of $M$ means the segment from $i$ to $j$ in $Q$. We now use arc to describe orthogonality relation between stable bricks over symmetric Nakayama algebra $A_n^{dn}$.

\begin{Lem}\label{non-zero-morphism}
	Let $M=M^i_{k_i,l_i}$ $(i\neq k_i,k_i-1)$ and $N=N^j_{k_j,l_j}$ be two stable bricks over $A_n^{dn}$. If their arcs intersect as follows (this means that $j\in \wideparen{ik_i},k_i\in \wideparen{jk_j},k_j\in \wideparen{ji}$ and $k_j\neq i$):	
	$$ \begin{tikzpicture}[scale=1.1]
	\vspace{-2cm}
	\draw(-100:1.1)arc(-100:60:1.1);
	\fill (60:1.1)circle(2pt);
	\fill (-100:1.1)circle(2pt);
	\node[left] (i) at (60:1.1) {$i$};
	\node[left] (k_i) at (-100:1.1) {$k_i$};
	\draw(-60:0.8)arc(-60:-230:0.8);
	\fill (-60:0.8)circle(2pt);
	\fill (-230:0.8)circle(2pt);
	\node[above] (j) at (-60:0.8) {$j$};
	\node[right] (k_j) at (-230:0.8) {$k_j$};
	\end{tikzpicture},$$	
	
\noindent 	then $\stHom_{A_n^{dn}}(N,M)\neq0$.
\end{Lem}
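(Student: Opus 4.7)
The plan is to construct an explicit nonzero morphism $f\colon N\to M$ whose image is the submodule of $M$ corresponding to the overlap of the two arcs, and then to verify via Lemma \ref{zero-morphism} that $f$ does not factor through a projective module.

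First I would set $s=[j-i)$, the length of the forward path from $i$ to $j$ in $Q$. The hypothesis $j\in\wideparen{ik_i}$ guarantees that $S_j$ occurs as a composition factor of $M$, and that the submodules of $M$ with top $S_j$ are precisely $\rad^{s+kn}(M)$ for $k=0,1,\ldots,l_i$. I would take $t=s+kn$, where $k$ is the smallest non-negative integer with $\ell(M)-t\leq\ell(N)$; then $\rad^t(M)$ has top $S_j$ and length at most $\ell(N)$, so it is a quotient of the uniserial module $N$. Composing this quotient with the inclusion $\rad^t(M)\hookrightarrow M$ produces a nonzero $f\in\Hom_{A_n^{dn}}(N,M)$ with $\text{Im}(f)=\rad^t(M)$.

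By the stable brick condition (Corollary \ref{symmetric}) we have $l_i,l_j\in\{0,d-1\}$. A short check, using $k_i\in\wideparen{jk_j}$ to bound the required image length, shows that one may take $k=0$ except in the case $l_i=d-1$, $l_j=0$, where $k=d-1$ is forced; crucially $l_j+k\leq d-1$ in every case. The remaining hypothesis $k_j\in\wideparen{ji}$ with $k_j\neq i$ unfolds to $[k_j-j)<[i-j)=n-s$, that is, $[k_j-j)+s\leq n-1$. Assembling these,
\[
\ell(N)+t=(l_j+k)n+[k_j-j)+s+1\leq (d-1)n+(n-1)+1=dn,
\]
so $\ell(N)+t<dn+1$, and Lemma \ref{zero-morphism} (applied to $f\colon N\to M$ with $\ell=dn$) yields $\underline{f}\neq 0$. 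Hence $\stHom_{A_n^{dn}}(N,M)\neq 0$.

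The delicate step is choosing the correct exponent $t$: depending on whether $M$ is short ($l_i=0$) or long ($l_i=d-1$), the image of $f$ sits in a different ``revolution'' of the uniserial module $M$, and the case $l_i=d-1$, $l_j=0$ must be handled separately. The geometric input that makes the length estimate work is the strict inequality $k_j\neq i$, which provides the single unit of slack needed to keep $\ell(N)+t$ strictly below $dn+1$; without this, $f$ would factor through the projective cover of $M$.
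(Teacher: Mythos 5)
Your proposal is correct and follows essentially the same approach as the paper: construct a nonzero $f\colon N\to M$ with $\mathrm{Im}\,f = \rad^t(M)$, then show $\ell(N)+t\leq dn < dn+1$ and invoke Lemma \ref{zero-morphism}. Your unified rule for choosing the exponent $t=s+kn$ (smallest $k$ with $\ell(M)-t\leq\ell(N)$) recovers exactly the paper's case-by-case choices ($k=0$ except $k=d-1$ when $l_i=d-1$, $l_j=0$, i.e.\ the paper's $t_1$ versus $t_2$), and your length computation $\ell(N)+t=(l_j+k)n+[k_j-j)+s+1\leq dn$ is the same estimate done more explicitly.
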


\begin{proof}
	If $l_i=0$, then $\ell(M)\leq n$ and there exists a unique integer $t$ satisfying $top(rad^t(M))\cong S_j$. Therefore, there is a morphism $f$ from $N$ to $M$ satisfying $Imf=rad^t(M)$ and the multiplicity of $S_i$ in $M/rad^t(M)$  is 1. By Corollary \ref{symmetric}, there are two cases for $l_j$:
	
	When $l_j=0$, we read from the picture that $S_i$ is not a composition factor of $N$, and $\ell(N)+t\leq n<dn+1$. By Lemma \ref{zero-morphism}, $\underline{f}\neq0$ and therefore $\stHom_{A_n^{dn}}(N,M)\neq0$. When $l_j=d-1$, the multiplicity of $S_i$ in $N$ is $d-1$, and $\ell(N)+t\leq dn<dn+1$. By Lemma \ref{zero-morphism}, $\underline{f}\neq0$ and therefore $\stHom_{A_n^{dn}}(N,M)\neq0$.
	
	If $l_i=d-1$, then there exist a minimum integer $t_1$ satisfying $top(rad^{t_1}(M))\cong S_j$ and a maximum integer $t_2$ satisfying $top(rad^{t_2}(M))\cong S_j$. Again we consider two cases for $l_j$:
	
	When $l_j=0$, we also read from the picture that $S_i$ is not a composition factor of $N$ and there is a morphism $f$ from $N$ to $M$ satisfying $Imf=rad^{t_2}(M)$, and $\ell(N)+t_2\leq dn<dn+1$. By Lemma \ref{zero-morphism}, $\underline{f}\neq 0$ and therefore $\stHom_{A_n^{dn}}(N,M)\neq0$. When $l_j=d-1$, there is a morphism $f$ from $N$ to $M$ satisfying $Imf=rad^{t_1}(M)$, and $\ell(N)+t_1\leq dn<dn+1$. By Lemma \ref{zero-morphism}, $\underline{f}\neq 0$ and therefore $\stHom_{A_n^{dn}}(N,M)\neq0$.
\end{proof}

\begin{Prop}\label{orthogonal-condition}
	Let $A_n^{dn}=kQ/I$ $(d\geq 2)$ be a symmetric Nakayama algebra and let
	$M=M^i_{k_i,l_i}$, $N=N^j_{k_j,l_j}$  be two indecomposable
	non-projective $A_n^{dn}$-modules. Then $\{M,N \}$ is an orthogonal system in $A_n^{dn}$-$\stmod$ if and only if it satisfies the
	following conditions.
	
	\begin{enumerate}[$(a)$]
		\item $i\neq j$, $k_i\neq k_j$.
		\item $l_i=0$ or $l_i=d-1$, $l_j=0$ or $l_j=d-1$.
		\item Their arcs belong to one of the four cases:
		\begin{table}[!h]
				\begin{small}
				\captionsetup{
					justification=raggedright,singlelinecheck=false}
				\subcaptionbox*{(1)}{
					\begin{tikzpicture}
					\draw(-100:1)arc(-100:60:1);
					\fill (60:1)circle(2pt);
					\fill (-100:1)circle(2pt);
					\node[left] (i) at (60:1) {$i$};
					\node[left] (k_i) at (-100:1) {$k_i$};
					\node[left] at (0:2.8) {$l_i+l_j>0$};
					\node[left] at (-10:2.5) {$j\neq k_j$};
					\draw(-80:0.6)arc(-80:-340:0.6);
					\fill (-80:0.6)circle(2pt);
					\fill (-340:0.6)circle(2pt);
					\node[right] (j) at (-80:0.6) {$j$};
					\node[left] (k_i) at (-340:0.6) {$k_j$};
					\end{tikzpicture}
				}
				\subcaptionbox*{(2)}{
					\begin{tikzpicture}
					\begin{small}
					\draw(-100:1)arc(-100:60:1);
					\fill (60:1)circle(2pt);
					\fill (-100:1)circle(2pt);
					\node[above left] (i) at (60:1) {$i$};
					\node[below right] (k_i) at (-100:1) {$k_i$};
					\draw(-130:1)arc(-130:-230:1);
					\fill (-130:1)circle(2pt);
					\fill (-230:1)circle(2pt);
					\node[above right] (j) at (-130:1) {$j$};
					\node[above right] (k_j) at (-230:1) {$k_j$};
					\node[left] at (0:3.4) {$l_i+l_j\leq d-1$};
					\end{small}
					\end{tikzpicture}}
     \end{small}
\end{table}
\begin{table}[!h]
	\begin{small}
			\captionsetup{
				justification=raggedright,singlelinecheck=false}	
				\subcaptionbox*{(3)}{
					\begin{tikzpicture}
					\draw(-100:1)arc(-100:60:1);
					\fill (60:1)circle(2pt);
					\fill (-100:1)circle(2pt);
					\node[left] (i) at (60:1) {$i$};
					\node[above left] (k_i) at (-100:1) {$k_i$};
					\draw(-60:1)arc(-60:30:1);
					\fill (-60:1)circle(2pt);
					\fill (30:1)circle(2pt);
					\node[right] (j) at (-60:1) {$k_j$};
					\node[right] (k_j) at (30:1) {$j$};
					\node[left] at (0:2.93) {$l_j\leq l_i$};
					\end{tikzpicture}}
				\qquad
				\subcaptionbox*{(4)}{
					\begin{tikzpicture}
					\draw(-100:1)arc(-100:60:1);
					\fill (60:1)circle(2pt);
					\fill (-100:1)circle(2pt);
					\node[left] (i) at (60:1) {$j$};
					\node[above left] (k_i) at (-100:1) {$k_j$};
					\draw(-60:1)arc(-60:30:1);
					\fill (-60:1)circle(2pt);
					\fill (30:1)circle(2pt);
					\node[right] (j) at (-60:1) {$k_i$};
					\node[right] (k_j) at (30:1) {$i$};
					\node[left] at (0:2.9) {$l_i\leq l_j$};
					\node[left] at (0:3.3) {$,$};
					\end{tikzpicture}}
			\end{small}
		\end{table}
	\end{enumerate}
where the two arcs in $(2)$  are disjoint and in the other cases the two arcs do intersect.
\end{Prop}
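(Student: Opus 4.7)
The plan is to prove both directions by combining the stable-brick criterion (Corollary \ref{symmetric}) with a careful case analysis based on how the two arcs sit inside the cyclic quiver $Q$, using Lemma \ref{zero-morphism} (in the strengthened form of Remark \ref{rem}) and Lemma \ref{easy-observation} as the main arithmetic tools.

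For the ``only if'' direction, assume $\{M,N\}$ is an orthogonal system. Then each of $M$ and $N$ is a stable brick, so Corollary \ref{symmetric} gives $l_i,l_j\in\{0,d-1\}$, which is condition $(b)$. Next, Lemma \ref{top-socle} forces $top(M)\ncong top(N)$ and $soc(M)\ncong soc(N)$, i.e.\ $i\ne j$ and $k_i\ne k_j$, which is condition $(a)$. For $(c)$, I would enumerate (up to the symmetry exchanging $M$ and $N$) the possible geometric arrangements of two arcs on the oriented circle $Q$ compatible with $(a)$, and rule out every configuration not among the four listed. The essential tool here is Lemma \ref{non-zero-morphism}: in the ``crossing'' configuration drawn there one has $\stHom_{A_n^{dn}}(N,M)\ne 0$, contradicting orthogonality. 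The remaining non-listed arrangements are either instances of the crossing configuration after relabelling, or violate the constraints on $l_i+l_j$ that reappear as restrictions in cases $(1)$--$(4)$.

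For the ``if'' direction, assume $(a),(b),(c)$. By $(b)$ and Corollary \ref{symmetric}, $M$ and $N$ are stable bricks, so it remains to show $\stHom_{A_n^{dn}}(M,N)=0=\stHom_{A_n^{dn}}(N,M)$ in each of the four arc configurations. By Lemma \ref{easy-observation} it suffices, for each of the two hom-spaces, to test a single morphism whose image has the smallest admissible $t$. By Remark \ref{rem}, vanishing of such a morphism in the stable category is equivalent to the numerical condition that the multiplicity of $top(N)$ in $N/rad^{t}(N)$ plus its multiplicity in $M$ is at least $d+1$. One then verifies this multiplicity bound separately in each of the four cases, using $(a)$ to locate the relevant composition factors, $(b)$ to pin down the Loewy lengths (either $\le n$ or in $[(d-1)n+1,dn]$), and the additional arc hypothesis of that case ($l_i+l_j>0$ for $(1)$, $l_i+l_j\le d-1$ for $(2)$, $l_j\le l_i$ for $(3)$, $l_i\le l_j$ for $(4)$) to close the inequality.

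The main obstacle is the combinatorial/geometric bookkeeping: translating ``arcs disjoint vs.\ nested vs.\ overlapping'' into precise inequalities among the positions of $i,j,k_i,k_j$ on the cycle, and then into multiplicity counts of composition factors of $M$ and $N$. Case $(1)$, where one arc is nested inside the other and at least one of $l_i,l_j$ equals $d-1$, is the most delicate: both modules may have Loewy length exceeding $n$, so the multiplicity of $top(N)$ in $M$ may already be large, and one must use the precise position of $k_j$ inside $\wideparen{ik_i}$ together with $j\ne k_j$ to control the multiplicity of $top(M)$ inside $N$ in the reverse direction. Once case $(1)$ is handled, cases $(3)$ and $(4)$ follow by essentially the same argument with the roles of $M$ and $N$ interchanged, while case $(2)$ is the easiest because disjointness of the arcs together with $l_i+l_j\le d-1$ immediately bounds all relevant multiplicities from above.
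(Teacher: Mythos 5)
Your plan matches the paper's proof in both structure and choice of tools: conditions $(a)$ and $(b)$ come from Lemma \ref{top-socle} and Corollary \ref{symmetric}, the exclusion of crossing arcs comes from Lemma \ref{non-zero-morphism}, and both the remaining ``only if'' constraints on $l_i,l_j$ and the entire ``if'' direction are settled by pairing Lemma \ref{easy-observation} with the sharpened multiplicity criterion of Remark \ref{rem}, case by case according to the arc configuration and the values of $l_i,l_j$. This is essentially the paper's proof in outline form.
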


\begin{proof}
	``$\Longrightarrow$"  (a) and (b) follow from Lemma \ref{top-socle} and Corollary \ref{symmetric}. The four pictures about arcs in (1)-(4) can follow from Lemma \ref{non-zero-morphism}, we just need to verify the conditions for $l_i$ and $l_j$ in four cases.
	
	{\it Case 1.} If $l_i=l_j=0$, then there is a unique integer $t$ satisfying $top(rad^t(M))\cong S_j$ such that the multiplicity  of $S_i$ in $M/rad^{t}(M)$ is $1$, and there is  a morphism $f\colon N\longrightarrow M$ satisfying $Imf=rad^t(M)$. Since the multiplicity  of $S_i$ in $N$ is $1$ and $d\geq 2$, by Remark \ref{rem}, we have that $\underline{f}\neq0$. This contradiction shows that $l_i+l_j>0$.
	
	{\it Case 2.} If $l_i=d-1,l_j=d-1$, then the multiplicity  of $S_i$ in $N$  is $d-1$ and the multiplicity of $S_j$ in $M$ is $d-1$. There exists a minimum integer $t$ satisfying $top(rad^{t}(M))\cong S_j$ such that the multiplicity of $S_i$ in $M/rad^{t}(M)$  is $1$. There is a morphism $f\colon N\longrightarrow M$ satisfying $Imf=rad^{t}(M)$. By Remark \ref{rem}, $\underline{f}\neq0$. This contradiction shows that $l_i+l_j\leq d-1$.
	
	{\it Case 3.}  If $l_i=0,l_j=d-1$, then  the multiplicity of $S_i$ in $N$  is $d-1$ and the multiplicity of $S_j$ in $M$ is $1$. There exists a unique integer $t$ satisfying $top(rad^{t}(M))\cong S_j$ such that  the multiplicity of $S_i$ in $M/rad^{t}(M)$ is $1$, and there is a morphism $f\colon N\longrightarrow M$ satisfying $Img=rad^{t}(M)$. By Remark \ref{rem}, $\underline{f}\neq0$. This contradiction shows that $l_j\leq l_i$.
	
	{\it Case 4.} If $l_i=d-1,l_j=0$, then we can show similarly as Case 3 that $l_i\leq l_j$.

	``$\Longleftarrow$" By Corollary \ref{symmetric}, we can assume that $M=M^i_{k_i,l_i}$ and $N=N^j_{k_j,l_j}$ are two stable bricks in $A_n^{dn}$-$\stmod$ under the following conditions: $i\neq j$, $k_i\neq k_j$ and $l_i=0$ or $l_i=d-1$, $l_j=0$ or $l_j=d-1$.
	
	We now prove that $\stHom_{A_n^{dn}}(M,N)=0$ and $\stHom_{A_n^{dn}}(N,M)=0$ by checking the four cases. In each case, we consider three subcases according to the values of $l_i$ and $l_j$.
	
	{\it Case 1.} (i) When $l_i=0,l_j=d-1$, the multiplicity of $S_i$ in $N$ is $d$ and  the multiplicity of $S_j$ in $M$ is $1$. There is a maximum integer $t_1$ satisfying $top(rad^{t_1}(N))\cong S_i$ such that  the multiplicity of $S_j$ in $N/rad^{t_1}(N)$ is $d$. There exists a morphism $f\colon M\longrightarrow N$ satisfying $Imf=rad^{t_1}(N)$. By Remark \ref{rem}, $\underline{f}=0$ and by Lemma \ref{easy-observation}, $\stHom_{A_n^{dn}}(M,N)=0$. Moreover, there is a unique integer $t_2$ satisfying $top(rad^{t_2}(M))\cong S_j$ and a morphism $g\colon N\longrightarrow M$ satisfying $Img=rad^{t_2}(M)$. By Remark \ref{rem}, $\underline{g}=0$ and by Lemma \ref{easy-observation}, $\stHom_{A_n^{dn}}(N,M)=0$.
	
	(ii) When $l_i=d-1,l_j=0$, the multiplicity of $S_i$ in $N$ is $1$ and  the multiplicity of $S_j$ in $M$  is $d$. There is a similar description as (i) for this case. Then $\stHom_{A_n^{dn}}(N,M)=0$ and $\stHom_{A_n^{dn}}(M,N)=0$.
	
	(iii) When $l_i=d-1,l_j=d-1$,  the multiplicity of $S_i$ in $N$ is $d$ and  the multiplicity of $S_j$ in $M$ is $d$. There is a minimum integer $t_1$ satisfying $top(rad^{t_1}(N))\cong S_i$ such that  the multiplicity of $S_j$ in $N/rad^{t_1}(N)$ is $1$. There exists a morphism $f\colon M\longrightarrow N$ satisfying $Imf=rad^{t_1}(N)$. By Remark \ref{rem}, $\underline{f}=0$ and by Lemma \ref{easy-observation}, $\stHom_{A_n^{dn}}(M,N)=0$. Similarly, there is a minimum integer $t_2$ satisfying $top(rad^{t_2}(M))\cong S_j$ such that the multiplicity of $S_i$ in $M/rad^{t_2}(M)$ is $1$. There is a morphism $g\colon N\longrightarrow M$ satisfying $Img=rad^{t_2}(M)$. By Remark \ref{rem}, $\underline{g}=0$ and by Lemma \ref{easy-observation}, $\stHom_{A_n^{dn}}(N,M)=0$.
	
	{\it Case 2.} (i) When $l_i=0,l_j=0$, $S_j$ is not a composition factor of $M$ and $S_i$ is not a composition factor of $N$. Then $\Hom_{A_n^{dn}}(M,N)=0$, $\Hom_{A_n^{dn}}(N,M)=0$ and therefore $\stHom_{A_n^{dn}}(M,N)=0$, $\stHom_{A_n^{dn}}(N,M)=0$.
	
	(ii) When $l_i=0,l_j=d-1$, $S_j$ is not a composition factor of $M$ and the multiplicity of $S_i$ in $N$ is $d-1$. Then $\Hom_{A_n^{dn}}(N,M)=0$ and there is a maximum integer $t$ satisfying $top(rad^t(N))\cong S_i$, however, $\ell(rad^t(N))>\ell(M)$, we have $\Hom_{A_n^{dn}}(M,N)=0$. Therefore, $\stHom_{A_n^{dn}}(M,N)=0$, $\stHom_{A_n^{dn}}(N,M)=0$.
	
	(iii) When $l_i=d-1,l_j=0$, $S_i$ is not a composition factor of $N$ and the multiplicity of $S_j$ in $M$ is $d-1$. It follows similarly as above that $\stHom_{A_n^{dn}}(M,N)=0$, $\stHom_{A_n^{dn}}(N,M)=0$.
	
	{\it Case 3.} (i) When $l_i=0,l_j=0$, $S_i$ is not a composition factor of $N$. Then $\Hom_{A_n^{dn}}(M,N)=0$ and therefore  $\stHom_{A_n^{dn}}(M,N)=0$. There is a unique integer $t$ satisfying $top(rad^{t}(M))\cong S_j$, however, $\ell(rad^t(M))>\ell(N)$, we have $\Hom_{A_n^{dn}}(N,M)=0$ and therefore $\stHom_{A_n^{dn}}(N,M)=0$.
	
	(ii) When $l_i=d-1,l_j=0$, we have that $S_i$ is not a composition factor of $N$. Then $\Hom_{A_n^{dn}}(M,N)=0$ and therefore  $\stHom_{A_n^{dn}}(M,N)=0$. There is a maximum integer $t$ satisfying $top(rad^{t}(M))\cong S_j$, however, $\ell(rad^t(M))>\ell(N)$. Then $\Hom_{A_n^{dn}}(N,M)=0$ and $\stHom_{A_n^{dn}}(N,M)=0$.
	
	(iii)  When $l_i=d-1,l_j=d-1$, the multiplicity of $S_i$ in $N$ is $d-1$ and the multiplicity of $S_j$ in $M$ is $d$. There exists a minimum integer $t_1$ satisfying $top(rad^{t_1}(N))\cong S_i$ such that the multiplicity of $S_j$ in $N/rad^{t_1}(N)$ is $1$, and there is a morphism $f\colon M\longrightarrow N$ satisfying $Imf=rad^{t_1}(N)$. By Remark \ref{rem}, $\underline{f}=0$ and by Lemma \ref{easy-observation}, $\stHom_{A_n^{dn}}(M,N)=0$. There exists an integer $t_2$ satisfying $top(rad^{t_2}(M))\cong S_j$ such that the multiplicity of $S_i$ in  $M/rad^{t_2}(M)$ is $2$, and there is a morphism $g\colon N\longrightarrow M$ satisfying $Img=rad^{t_2}(M)$. By Remark \ref{rem}, $\underline{g}=0$ and by Lemma \ref{easy-observation}, $\stHom_{A_n^{dn}}(N,M)=0$.
	
	{\it Case 4.}  This is similar to {\it Case 3}.
	
	Summarizing the above discussion we get that $\{M,N \}$ is an orthogonal system in $A_n^{dn}$-$\stmod$.
\end{proof}

\begin{Rem}\label{d=1} When $d=1$, the assertion of Proposition \ref{orthogonal-condition} remains valid by removing the conditions for $l_i$ and $l_j$ in $(c)$.
\end{Rem}

\section{A construction of sms's over $A_n^\ell$}

\subsection{Non-crossing partitions}

In this subsection we first introduce (classical) non-crossing partitions, then we give some observations on the non-crossing partitions associated to sms's over $A_n^{dn}$.

\begin{Def} {\rm(cf. \cite{K})}
	A partition of the set $\underline{n}:=\{1,2,\cdots,n\}$ is a map $p$ from $\underline{n}$ to its power set with the following properties: (1) $i\in p(i)$ for all $1\leq i\leq n$; (2) $p(i)=p(j)$ or $p(i)\cap p(j)=\emptyset$ for all $1\leq i,j \leq n$. We call $p(i)$ a block of $p$. A non-crossing partition of the set $\underline{n}$ is a partition $p$ that no two blocks cross each other, that is, if $a$ and $b$ belong to one block and $x$ and $y$ belong to another, we cannot have $a<x<b<y$.
\end{Def}

We show how an sms $\mathcal{S}$ of $A_n^{dn}$ relates to a non-crossing partition. By \cite[Proposition 6.2]{KL}, both the top and the socle series of $\mathcal{S}$ give the complete set $\{S_1, S_2,\cdots, S_n\}$ of simple $A_n^{dn}$-modules.
For each $1\leq i\leq n$, there is a subset $p(i)=\{i, k_i,
k_i^{(2)},\cdots,k_i^{(s_i-1)}\}$ of $\underline{n}$ such that
there exists object $M_{ij}$ in $\mathcal{S}$ with $top(M_{ij})\cong
S_{k_i^{(j)}}, soc(M_{ij})\cong S_{k_i^{(j+1)}}$ for each $0\leq j\leq
s_i-1$, where $k_i^{(0)}=k_i^{(s_i)}=i, k_i^{(1)}=k_i$. In this way, we get a partition $p$ of the set $\underline{n}$.

\begin{Rem}\label{permutation}
Since we have subset  $p(i)=\{i, k_i,
		k_i^{(2)},\cdots,k_i^{(s_i-1)}\}$ of $\underline{n}$ for each $1\leq i\leq n$, we can define a permutation $\sigma$ on $\underline{n}$ such that $\sigma(i)=k_i$ for any $i$ in $\underline{n}$. Moreover $\sigma^j(i)=k_i^{(j)}$ for each $2\leq j\leq s_i-1$. 				
\end{Rem}

\begin{Ex}\label{p-ex}
 Consider $\mathcal{S}=\left\{4,~~\begin{matrix}2 \\3\\4\\1\end{matrix},~~\begin{matrix}3\\4\\1 \\2\end{matrix},~~\begin{matrix}1\\2 \\3\end{matrix}\right \}$ in $A_4^{4}$-$\stmod$. Notice that here we use Loewy diagram to express indecomposable modules for simplicity. By Theorem \ref{main-thm}, $\mathcal{S}$ is an sms of $A_4^4$. By the above notion of $p(i)$ for each $1\leq i\leq 4$, we have $p(1)=p(2)=p(3)=\{3,2,1\}$ and $p(4)=\{4\}$. Moreover the permutation $\sigma$ on $\underline{4}$ defined in Remark \ref{permutation} is as follows: $\sigma(1)=3$, $\sigma(2)=1$, $\sigma(3)=2$ and $\sigma(4)=4$.
\end{Ex}
		
From now on we fix the following notations: $\mathcal{S}$ is an sms of $A_n^{dn}$, $p$ is the corresponding partition. For each $1\leq i\leq n$, the block $p(i)=\{i, k_i,
k_i^{(2)},\cdots,k_i^{(s_i-1)}\}$ is determined as explained above, that is, there exists object $M_{ij}$ in $\mathcal{S}$ satisfying $top(M_{ij})\cong S_{k_i^{(j)}}$ and
$soc(M_{ij})\cong S_{k_i^{(j+1)}}$ for each $0\leq j\leq s_i-1$, where $k_i^{(0)}=k_i^{(s_i)}=i, k_i^{(1)}=k_i$.

By Proposition \ref{orthogonal-condition}, we have that the partition $p$ satisfies the following ``anti-clockwise" property.

\begin{Cor}\label{anticlockwise}
	Let $\mathcal{S}$ be an sms of $A_n^{dn}=kQ/I$ and $p$ the partition obtained as above. Suppose that $p(i)=\{i, k_i,
	k_i^{(2)},\cdots,k_i^{(s_i-1)}\}$, where $s_i\geq3$. Then $k_i^{(t)}$ is a vertex on the
	arc $\wideparen{ik_i^{(t-1)}}$ in the quiver $Q$ for each $2\leq t\leq s_i-1$.
\end{Cor}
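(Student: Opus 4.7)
The plan is to prove the corollary by induction on $t$. For the base case $t=2$, the orthogonal pair $\{M_{i,0},M_{i,1}\}$ has arcs $\wideparen{ik_i}$ and $\wideparen{k_ik_i^{(2)}}$ sharing the endpoint $k_i$ (as $\mathrm{soc}(M_{i,0})\cong\mathrm{top}(M_{i,1})\cong S_{k_i}$), ruling out case $(2)$ of Proposition \ref{orthogonal-condition}$(c)$. If $k_i^{(2)}$ lay in the complement arc $\wideparen{k_ii}\setminus\{k_i,i\}$, I would exhibit the canonical morphism $f\colon M_{i,1}\to M_{i,0}$ with $\mathrm{Im}\,f=\mathrm{rad}^{\ell(M_{i,0})-1}(M_{i,0})\cong S_{k_i}$; a length check via Lemma \ref{zero-morphism}, using Corollary \ref{symmetric} to bound $l_{M_{i,0}},l_{M_{i,1}}\in\{0,d-1\}$, yields $\underline{f}\neq 0$ and contradicts orthogonality, so $k_i^{(2)}\in\wideparen{ik_i}$.

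For the inductive step, suppose $k_i^{(s)}\in\wideparen{ik_i^{(s-1)}}$ for $2\le s<t$; iterating this, the block elements appear along $\wideparen{ik_i}$ in the order $i,k_i^{(s_i-1)},\dots,k_i^{(t-1)},k_i^{(t-2)},\dots,k_i^{(2)},k_i$. I analyse the orthogonal pair $\{M_{i,0},M_{i,t-1}\}$: if $k_i^{(t)}\in\wideparen{k_ii}\setminus\{k_i,i\}$, the four crossing conditions of Lemma \ref{non-zero-morphism} (with $M=M_{i,0}$, $N=M_{i,t-1}$) are verified from the inductive ordering, producing $\stHom(M_{i,t-1},M_{i,0})\neq 0$---a contradiction. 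If instead $k_i^{(t)}\in\wideparen{k_i^{(t-1)}k_i}\setminus\{k_i^{(t-1)},k_i\}$, then $k_i^{(t)}$ lies strictly in some gap $\wideparen{k_i^{(j)}k_i^{(j-1)}}$ with $2\le j\le t-1$ (setting $k_i^{(1)}:=k_i$): for $j=t-1$ orthogonality of $\{M_{i,t-2},M_{i,t-1}\}$ is violated by a base-case-style morphism argument (their arcs share only $k_i^{(t-1)}$, and $M_{i,t-2}$'s arc wraps around), while for $2\le j\le t-2$ Lemma \ref{non-zero-morphism} applied with $M=M_{i,j-1}$, $N=M_{i,t-1}$ gives $\stHom(M_{i,t-1},M_{i,j-1})\neq 0$. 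The only remaining possibility is $k_i^{(t)}\in\wideparen{ik_i^{(t-1)}}$.

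The main obstacle is the case analysis in the inductive step: one must correctly identify the module $M_{i,j-1}$ whose orthogonality with $M_{i,t-1}$ is violated for each forbidden position of $k_i^{(t)}$, and uniformly verify the length inequality of Lemma \ref{zero-morphism} (or the crossing conditions of Lemma \ref{non-zero-morphism}) across the possible values of $l\in\{0,d-1\}$ given by Corollary \ref{symmetric}. Edge cases, e.g.\ when $\ell(M_{i,0})\le 2$ so that the assumption $i\neq k_i,k_i-1$ of Lemma \ref{non-zero-morphism} fails, are handled by direct application of Lemma \ref{zero-morphism}.
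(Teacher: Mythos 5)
Your high-level strategy (induction on $t$, contradicting orthogonality of suitable pairs $\{M_{i,\cdot},M_{i,\cdot}\}$ via Lemma \ref{non-zero-morphism} or an explicit morphism plus Lemma \ref{zero-morphism}) does recover the conclusion and is essentially an unrolled version of the paper's appeal to Proposition \ref{orthogonal-condition}(c). There is, however, a genuine gap in the morphism argument you propose for the base case, and which you reuse for the subcase $j=t-1$ of the inductive step. You fix the morphism $f\colon M_{i,1}\to M_{i,0}$ whose image is $\mathrm{rad}^{\ell(M_{i,0})-1}(M_{i,0})=\mathrm{soc}(M_{i,0})\cong S_{k_i}$, i.e.\ you take $t=\ell(M_{i,0})-1$ in Lemma \ref{zero-morphism}, and assert that the length check yields $\underline f\neq 0$ for every admissible pair $l_{M_{i,0}},l_{M_{i,1}}\in\{0,d-1\}$. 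This fails when $l_{M_{i,0}}=l_{M_{i,1}}=d-1$, a subcase permitted by Corollary \ref{symmetric}. Writing $a=[k_i-i)$ and $b=[k_i^{(2)}-k_i)$, Lemma \ref{zero-morphism} gives $\underline f\neq 0$ iff $\ell(M_{i,1})+t\leq dn$, which here reads $2n(d-1)+a+b+1\leq dn$, i.e.\ $a+b\leq n(2-d)-1$; for $d\geq 2$ the right-hand side is $\leq -1$ while $a,b\geq 1$, so in fact $\underline f=0$ and you obtain no contradiction.

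The correct choice in that subcase is the morphism realising the \emph{minimal} $t$ with $\mathrm{top}(\mathrm{rad}^{t}(M_{i,0}))\cong S_{k_i}$, namely $t=[k_i-i)=a$: then $\ell(M_{i,1})+t=n(d-1)+a+b+1\leq n(d-1)+n=dn<dn+1$ (since $k_i^{(2)}$ strictly on $\wideparen{k_ii}$ forces $a+b\leq n-1$), so $\underline f\neq 0$. This is precisely the choice made in the proof of Lemma \ref{non-zero-morphism} when $l_i=l_j=d-1$, so the clean fix is to invoke Lemma \ref{non-zero-morphism} in the base case as well, exactly as you already do for $2\leq j\leq t-2$ in the inductive step, rather than constructing a morphism with a pre-fixed $t$. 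For comparison, the paper's own proof is shorter: it observes that the shared vertex $\mathrm{soc}(M_{i0})\cong\mathrm{top}(M_{i1})\cong S_{k_i}$ rules out cases (2)--(4) of Proposition \ref{orthogonal-condition}(c), so case (1) must hold, giving $k_i^{(2)}\in\wideparen{ik_i}$ directly and then iterating ``similarly'' along the block.
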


\begin{proof}
	Firstly, we consider the objects $M_{i0}$ and $M_{i1}$ in $\mathcal{S}$, where $\{M_{i0},M_{i1}\}$ is an orthogonal system in $A_n^{dn}$-$\stmod$ and  $top(M_{i0})\cong S_{i}, soc(M_{i0})\cong S_{k_i}$, $top(M_{i1})\cong S_{k_i}, soc(M_{i1})\cong S_{k^{(2)}_i}$. By Proposition \ref{orthogonal-condition}(c), we have that their arcs must be the first case. Then $k_i^{(2)}$ is a vertex on the arc $\wideparen{ik_i}$ from the vertex $i$ to the vertex $k_i$. Similarly, when $s_i\geq4$, we have that $k_i^{(t)}$ is a vertex on the arc $\wideparen{ik_i^{(t-1)}}$ for each $3\leq t\leq s_i-1$.
\end{proof}

We are ready to prove that the above partition $p$ corresponding to $\mathcal{S}$ is actually a non-crossing partition.

\begin{Cor}\label{sms-partition}
	Let $\mathcal{S}$ be an sms of $A_n^{dn}$ and $p$ the partition corresponding to $\mathcal{S}$. Then $p$ is a non-crossing partition of $\underline{n}$.
\end{Cor}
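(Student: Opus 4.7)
My plan is a contradiction argument: assume $p$ is crossing, pass to a minimal crossing quadruple, and apply Lemma \ref{non-zero-morphism} to two specific modules of $\mathcal{S}$ to produce a nonzero stable homomorphism, contradicting orthogonality.

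Concretely, suppose four elements $a,b \in p(i)$ and $x,y \in p(j)$ with $p(i) \ne p(j)$ satisfy $a<x<b<y$, and choose such a quadruple minimizing $(b-a)+(y-x)$. I would first show that minimality forces $a,b$ to be linearly adjacent within $p(i)$ (no $c \in p(i) \cap (a,b)$) and $x,y$ to be linearly adjacent within $p(j)$: otherwise, an interposed $c \in p(i) \cap (a,b)$, which cannot equal $x$ since the blocks are distinct, produces either the smaller crossing $(c,x,b,y)$ if $c<x$ or $(a,x,c,y)$ if $c>x$, in both cases strictly decreasing $b-a$; the symmetric argument applied to any $z \in p(j) \cap (x,y)$ gives crossings $(a,z,b,y)$ or $(a,x,b,z)$ with strictly smaller $y-x$. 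Combining these adjacency conclusions with the clockwise order of the $\sigma$-orbits supplied by Corollary \ref{anticlockwise} (together with Remark \ref{permutation}), I deduce that $\sigma(b)=a$ and $\sigma(y)=x$. Hence the modules $M_b,M_y \in \mathcal{S}$ (the unique members of $\mathcal{S}$ with tops $S_b$ and $S_y$) have socles $S_a$ and $S_x$ respectively.

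Next I would apply Lemma \ref{non-zero-morphism} with $M := M_b$ and $N := M_y$, so that $(i,k_i)=(b,a)$ and $(j,k_j)=(y,x)$. The arc $\wideparen{ba}$ is the clockwise path $b \to b+1 \to \cdots \to n \to 1 \to \cdots \to a$, which contains $y$ since $b<y\le n$; symmetrically $\wideparen{yx}$ contains $a$ (as $1 \le a<x$) and $\wideparen{yb}$ contains $x$ (as $1 \le x<b$). Together with $x \ne b$, these inclusions supply exactly the crossing configuration required by the lemma. The standing hypothesis $b \ne a,\, a-1\pmod n$ is automatic: $a<b$ rules out $b=a$, and $b \equiv a-1\pmod n$ would force $(a,b)=(1,n)$, leaving no room for $y>b$ in $\underline{n}$. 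Lemma \ref{non-zero-morphism} then yields $\stHom_{A_n^{dn}}(M_y,M_b) \ne 0$, contradicting the orthogonality of $\mathcal{S}$.

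The main obstacle is choosing the right minimization quantity so that both pairs $a,b$ and $x,y$ simultaneously become cyclically adjacent within their respective blocks, enabling Corollary \ref{anticlockwise} to pin down the socles of $M_b$ and $M_y$ as $S_a$ and $S_x$; once this is set up, the three arc inclusions read off directly from the linear order $a<x<b<y$ and the verification of Lemma \ref{non-zero-morphism} is mechanical.
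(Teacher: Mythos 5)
Your proof is correct and ultimately follows the same route as the paper: both arguments locate two modules of $\mathcal{S}$ whose arcs cross and invoke Lemma \ref{non-zero-morphism} to produce a nonzero stable homomorphism contradicting orthogonality. The difference is purely in how the crossing pair is isolated: the paper relabels base points within the two blocks (``without loss of generality'') using the cyclic ordering from Corollary \ref{anticlockwise}, whereas you pass to a crossing quadruple minimizing $(b-a)+(y-x)$, which makes the $\sigma$-adjacency $\sigma(b)=a$, $\sigma(y)=x$ explicit and gives a more self-contained justification of the same reduction.
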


\begin{proof}
Using the above notations, we have the block $p(i)=\{i, k_i, k_i^{(2)},\cdots,k_i^{(s_i-1)}\}$ such that there exists object $M_{ij}$ in $\mathcal{S}$ satisfying $top(M_{ij})\cong S_{k_i^{(j)}}$ and $soc(M_{ij})\cong S_{k_i^{(j+1)}}$ for each $0\leq j\leq s_i-1$, where $k_i^{(0)}=k_i^{(s_i)}=i, k_i^{(1)}=k_i$. Take two different blocks $p(i)=\{i, k_i, k_i^{(2)},\cdots,k_i^{(s_i-1)}\}$ and $p(j)=\{j, k_j, k_j^{(2)},\cdots,k_j^{(s_j-1)}\}$. By Corollary \ref{anticlockwise}, we have the following graph about the vertices in $p(i)$ 	$$\begin{tikzpicture}
	\draw(-180:1)arc(-180:180:1);
	\fill (90:1)circle(2pt);
	\fill (-90:1)circle(2pt);
	\fill (0:1)circle(2pt);
	\fill (-180:1)circle(2pt);
	\fill (-45:1)circle(2pt);
	\fill (-135:1)circle(2pt);
	\node[above ] (i) at (90:1) {$i$};
	\node[left] (k_i) at (-180:1) {$k_i$};
	\node[right] (k_i) at (0:1) {$k_i^{(s_i-1)}$};
	\end{tikzpicture}.$$
Without loss of generality we can assume that the vertex $j$ lies on the arc $\wideparen{k_ii}$. We claim that $k_j$, $k_j^{(2)},\cdots,~k_j^{(s_j-1)}$ are also vertices on the arc $\wideparen{k_ii}$. Otherwise, without loss of generality we can assume that the vertex $k_j$ is a vertex on the arc $\wideparen{ik_i}$. Moreover, there exist objects $M_{i0}$ in $\mathcal{S}$ satisfying $top(M_{i0})\cong S_{i}$, $soc(M_{i0})\cong S_{k_i}$ and $M_{j0}$ in $\mathcal{S}$ satisfying $top(M_{j0})\cong S_{j}$, $soc(M_{j0})\cong S_{k_j}$. By Lemma \ref{non-zero-morphism}, $\stHom_{A_n^{dn}}(M_{i0},M_{j0})\neq 0$. This is a contradiction!
	
Therefore, $p$ is a non-crossing partition.
\end{proof}

In the next two results, we use non-crossing partitions to describe some properties of sms's.

\begin{Lem}\label{basic-property}
	Let $\mathcal{S}$ be an sms of $A_n^{dn}$ $(d\geq 2)$ and $p$ the corresponding non-crossing
	partition. For each $1\leq i\leq n$,
	let $p(i)=\{i, k_i, k_i^{(2)},\cdots,k_i^{(s_i-1)}\}$ be defined as above.
	For each $0\leq j\leq s_i-1$, \new{assume that} \old{if}  $M_{ij}=M_{k_i^{(j+1)}, l_{ij}}^{k_i^{(j)}}$ for some $l_{ij}\geq0$, then there is at most one $l_{ij}$ satisfying
	$l_{ij}=0$ for all $0\leq j\leq s_i-1$.
\end{Lem}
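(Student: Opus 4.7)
The plan is to argue by contradiction: assume $l_{ij_1}=l_{ij_2}=0$ for some indices $j_1<j_2$ in $\{0,\ldots,s_i-1\}$, and I will construct a morphism $f\colon M_{ij_1}\to M_{ij_2}$ that is nonzero in $\stHom_{A_n^{dn}}(M_{ij_1},M_{ij_2})$, contradicting the orthogonality of $\mathcal{S}$. First, I invoke Corollary \ref{anticlockwise} to record that traversing $Q$ clockwise from $i$ one meets $k_i^{(s_i-1)},\ldots,k_i^{(2)},k_i$ in this order, and then returns to $i$ through a ``gap'' containing none of the other $k_i^{(t)}$. A direct inspection of this cyclic configuration shows that the top vertex $k_i^{(j_1)}$ of $M_{ij_1}$ lies on the arc of $M_{ij_2}$, and that the socle vertex $k_i^{(j_2+1)}$ of $M_{ij_2}$ lies on the arc of $M_{ij_1}$.

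Since $l_{ij_1}=l_{ij_2}=0$, both modules have Loewy length at most $n$, and hence multiplicity-free composition factors. By uniseriality, $M_{ij_2}$ then has a unique submodule $N$ with $top(N)\cong S_{k_i^{(j_1)}}$ and $soc(N)\cong S_{k_i^{(j_2+1)}}$, while $M_{ij_1}$ has a unique quotient $Q$ with the same top and socle; both have Loewy length $r=[k_i^{(j_2+1)}-k_i^{(j_1)})+1$. Composing $M_{ij_1}\twoheadrightarrow Q\cong N\hookrightarrow M_{ij_2}$ produces a nonzero morphism $f\colon M_{ij_1}\to M_{ij_2}$ with $Imf=\rad^t(M_{ij_2})$, where $t=\ell(M_{ij_2})-r$.

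A short cyclic-arithmetic calculation---exploiting that the clockwise composite path $k_i^{(j_2)}\to k_i^{(j_1)}\to k_i^{(j_1+1)}$ wraps around $Q$ exactly once---then gives
$$\ell(M_{ij_1})+t \;=\; n+[k_i^{(j_1+1)}-k_i^{(j_2)})+1.$$
Since $d\geq 2$ and $0\leq[k_i^{(j_1+1)}-k_i^{(j_2)})\leq n-1$, the right-hand side is at most $2n$, strictly less than $dn+1=\ell+1$. Lemma \ref{zero-morphism} then gives $\underline{f}\neq 0$, contradicting $\stHom_{A_n^{dn}}(M_{ij_1},M_{ij_2})=0$.

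The main obstacle will be verifying the ``wraps around exactly once'' identity uniformly across the degenerate configurations: when $j_1=0$ (so the arc of $M_{ij_1}$ is the short path from $i$ to $k_i$ rather than a nearly full loop), when $j_2=j_1+1$ (so $k_i^{(j_1+1)}=k_i^{(j_2)}$ and the bracket quantity vanishes), and when $j_2=s_i-1$ (so $k_i^{(j_2+1)}$ coincides with $i$). The hypothesis $d\geq 2$ is essential for the final strict inequality; the case $d=1$ must be handled by separate remark.
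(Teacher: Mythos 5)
Your argument is correct in outline and arrives at the right conclusion, but it takes a noticeably different route from the paper. The paper's proof first reduces (WLOG by cyclic relabelling) to the case $j_1=0$, then observes via Corollary~\ref{anticlockwise} that the pair $\{M_{i0},M_{ij}\}$ falls exactly into configuration~(1) of Proposition~\ref{orthogonal-condition}, whose necessity clause $l_i+l_j>0$ immediately forces $l_{ij}=d-1$ for all $j\geq 1$. You instead bypass Proposition~\ref{orthogonal-condition} and re-derive the needed non-vanishing of $\underline{f}$ from scratch: you build the composite $M_{ij_1}\twoheadrightarrow Q\cong N\hookrightarrow M_{ij_2}$, compute $\ell(M_{ij_1})+t = n+[k_i^{(j_1+1)}-k_i^{(j_2)})+1\leq 2n<dn+1$, and invoke Lemma~\ref{zero-morphism}. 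This is essentially a hand re-proof of the necessity of condition~(1) in Proposition~\ref{orthogonal-condition} (compare its {\it Case~1}), so your argument is sound but duplicates work the paper has already factored out. Two small remarks: (i) your approach for general $j_1<j_2$ is fine, but adopting the paper's WLOG $j_1=0$ would collapse most of the ``degenerate configurations'' you list at the end, leaving only the $s_i=2$ case to check separately; (ii) once you know $\underline{f}\neq 0$ you already have $\stHom(M_{ij_1},M_{ij_2})\neq 0$ directly, so Lemma~\ref{easy-observation} is not needed here, and the contradiction with orthogonality of $\mathcal{S}$ is immediate since $M_{ij_1}\neq M_{ij_2}$ (they have distinct tops, as $k_i^{(j_1)}\neq k_i^{(j_2)}$). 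Be aware, though, that your final paragraph defers verification of the degenerate cases — a complete write-up must either do that casework or avoid it via the WLOG reduction.
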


\begin{proof}
	If $s_i=1$, then $p(i)=\{i\}$ and therefore we have our desired result.
	
	If $s_i\geq2$,
	without loss of generality we can assume $l_{i0}=0$.  When $s_i\geq3$, we use Corollary \ref{anticlockwise}. Notice that $k_i^{(2)}=i$ when $s_i=2$. Then, regardless of $s_i\geq3$ or $s_i=2$, we have that the arcs of $M_{i0}$ and $M_{ij}$ are as follows for any $1\leq j\leq s_i-1$:
	
	$$\begin{tikzpicture}
	\draw(-100:1)arc(-100:60:1);
	\fill (60:1)circle(2pt);
	\fill (-100:1)circle(2pt);
	\node[left] (i) at (60:1) {$i$};
	\node[left] (k_i) at (-100:1) {$k_i$};
	\draw(-80:0.6)arc(-80:-340:0.6);
	\fill (-80:0.6)circle(2pt);
	\fill (-340:0.6)circle(2pt);
	\node[right] (j) at (-80:0.6) {$k_i^{(j)}$};
	\node[left] (k_i) at (-340:0.6) {$k_i^{(j+1)}$};
	\end{tikzpicture}.$$
	Since $\{M_{i0}, M_{ij}\}$ is an orthogonal system in $A_n^{dn}$-$\stmod$, by Proposition \ref{orthogonal-condition}, we must have $l_{ij}=d-1$ for each $1\leq j\leq s_i-1$.
\end{proof}

\begin{Lem}\label{basic-property2}
	Let $\mathcal{S}$ be an sms of $A_n^{dn}$ $(d\geq 2)$ and $p$ the corresponding non-crossing
	partition. For each $1\leq i\leq n$, let $p(i)=\{i, k_i, k_i^{(2)},\cdots,k_i^{(s_i-1)}\}$ be defined as above. For each $0\leq j\leq s_i-1$, \new{let} \old{let} $M_{ij}=M_{k_i^{(j+1)}, l_{ij}}^{k_i^{(j)}}$  for some $l_{ij}\geq0$. Suppose that there exists some $i$ satisfying $l_{ij}=d-1$ for all $0\leq j\leq s_i-1$. Then, for any other block $p(s)$ different from $p(i)$ there is only one $t$ such that $l_{st}=0$.
\end{Lem}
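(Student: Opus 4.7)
The plan is to argue by contradiction, combined with Lemma~\ref{basic-property}. Since Lemma~\ref{basic-property} already gives that at most one $t$ satisfies $l_{st}=0$ in any block of $p$, it suffices to show that every block $p(s)$ different from $p(i)$ contains at least one $t$ with $l_{st}=0$. I would fix such a block $p(s)$ and suppose, toward a contradiction, that $l_{st}=d-1$ for every $0\leq t\leq s_s-1$.

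The first step is to locate $p(s)$ geometrically with respect to $p(i)$. Corollary~\ref{sms-partition} tells us that $p$ is a non-crossing partition, so $p(s)$ must be contained in one of the ``gaps'' of $p(i)$ along the cyclic quiver $Q$. Using the anti-clockwise ordering supplied by Corollary~\ref{anticlockwise} (which in particular implies that along $Q$-arrows the vertices of $p(i)$ appear in the cyclic order $i,k_i^{(s_i-1)},k_i^{(s_i-2)},\ldots,k_i^{(1)}$), there is an index $m$ such that $p(s)$ sits entirely in the open arc $G_m$ from $k_i^{(m+1)}$ to $k_i^{(m)}$ in $Q$-direction.

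The next step is to single out a distinguished pair of modules from the two blocks. List the vertices of $p(s)$ along $Q$ inside $G_m$ as $w_1,\ldots,w_{s_s}$. The anti-clockwise labeling of $p(s)$ then produces a unique index $t_0$ such that the arc of $M_{s,t_0}$ runs from $w_1$ to $w_{s_s}$, and this arc lies entirely inside $G_m$. On the other side, the arc of $M_{i,m}$ is the $Q$-path from $k_i^{(m)}$ to $k_i^{(m+1)}$ going the ``long way around'', i.e., through every vertex of $Q$ outside $G_m$, meeting $G_m$ only at its two boundary vertices $k_i^{(m+1)}$ and $k_i^{(m)}$. Consequently the arcs of $M_{i,m}$ and $M_{s,t_0}$ are disjoint subsets of $Q$.

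The last step is to invoke Proposition~\ref{orthogonal-condition}. Among the four arc-configurations in part~(c), only configuration~(2) describes disjoint arcs, and it requires $l_{i,m}+l_{s,t_0}\leq d-1$. Under the standing assumption both $l$'s equal $d-1$, so their sum is $2(d-1)$, which exceeds $d-1$ whenever $d\geq 2$. This contradiction forces at least one $l_{st}=0$, and Lemma~\ref{basic-property} upgrades ``at least one'' to ``exactly one''. The main technical obstacle I anticipate is the geometric verification in the third paragraph, namely using Corollary~\ref{anticlockwise} to check that the arcs of the chosen modules really are disjoint, especially in the degenerate corner cases $s_i=1$ or $s_s=1$ where one of the arcs reduces to a single vertex; these cases need to be addressed explicitly but present no real difficulty.
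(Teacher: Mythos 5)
Your proof is correct and takes essentially the same route as the paper: locate $p(s)$ in a gap of $p(i)$ using the non-crossing property, find a pair of modules whose arcs are disjoint, and invoke Case~(2) of Proposition~\ref{orthogonal-condition} with $l_{i,m}=d-1$ to force the other exponent to be $0$; then Lemma~\ref{basic-property} upgrades existence to uniqueness. The paper compresses the argument into a picture together with an implicit "without loss of generality" (re-choosing the representatives $i$ and $s$ so that $p(s)$ lies in the gap bounded by $k_i$ and $i$ and so that $\wideparen{sk_s}$ stays inside that gap, hence considering $M^i_{k_i,l_{i0}}$ and $M^s_{k_s,l_{s0}}$), whereas you keep the representatives fixed and instead explicitly identify the index $m$ of the gap and the index $t_0$ for which $M_{s,t_0}$ has arc $\wideparen{w_1w_{s_s}}$ — this is a more careful unpacking of the same idea, and your observation that the degenerate cases $s_i=1$ or $s_s=1$ merit a sentence is a reasonable point the paper does not address.
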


\begin{proof}
	Without loss of generality we can assume the vertices in $p(i)$ and in $p(s)$ as follows:
	
	$$\begin{tikzpicture}
	\draw(-180:1)arc(-180:180:1);
	\fill (90:1)circle(2pt);
	\fill (-90:1)circle(2pt);
	\fill (0:1)circle(2pt);
	\fill (135:1)circle(2pt);
	\fill (-180:1)circle(2pt);
	\fill (-45:1)circle(2pt);
	\fill (-135:1)circle(2pt);
	\node[left] (i) at (135:1) {$k_s$};
	\node[above ] (i) at (90:1) {$i$};
	\node[below] (k_i) at (-90:1) {$k_i$};
	\node[left] (i) at (-135:1) {$s$};
	\end{tikzpicture}.$$
	Consider the modules $M^s_{k_s,l_{s0}}$ and $M^i_{k_i,l_{i0}}$. Since $\{M^s_{k_s,l_{s0}}, M^i_{k_i,l_{i0}}\}$ is an orthogonal system in $A_n^{dn}$-$\stmod$, by Proposition \ref{orthogonal-condition}, we must have $l_{s0}=0$. Moreover, by Lemma \ref{basic-property}, $l_{s0}$ is the only one.
\end{proof}

\subsection{The construction of sms's}

In this subsection, we give an explicit construction of sms's over any self-injective Nakayama algebra. We first construct sms's of symmetric Nakayama algebra and then use covering theory to deal with the general case.

We denote by $\mathcal{P}$ the set of non-crossing
partitions of $\underline{n}=\{1,2,\cdots,n\}$ and given $i\in \mathbb{Z}$, by $\bar{i}$ we denote the positive integer in $\underline{n}$ such that $i\equiv\bar{i}$ mod $n$. For $p\in \mathcal{P}$, let $p(i)=\{i, k_i, k_i^{(2)},\cdots,k_i^{(s_i-1)}\}$ (with the ordering as in Corollary \ref{anticlockwise}, when $s_i\geq3$) be the block for $1\leq i\leq n$ and let $\widehat{i}$ be the set $\{i,\overline{i+1},\cdots,k_i\}$. Under these notations, we introduce the following definition.

\begin{Def}\label{two-type-def}
	Let $A_n^{dn}$ be a symmetric Nakayama algebra and
	$\mathcal{P}$ the set of non-crossing partitions of the
	set $\underline{n}$, where $n,d$ are positive integers. For any non-crossing
	partition $p$ in $\mathcal{P}$ and any $1\leq k\leq n$, we define
	two types of sets of indecomposable $A_n^{dn}$-modules as
	follows.
	
	$(1)$ $\mathcal{L}_{p,k}=\{M^i_{k_i,l_i}|i=1, 2,\cdots,n\},$ where $ l_i=\begin{cases}
	0, & \widehat{i}\cap p(k)=\emptyset,\\
	d-1, & otherwise.\\
	\end{cases}$
	
	$(2)$ $\mathcal{S}_{p,k}=\{M^i_{k_i,l_i}|i=1,2,\cdots,n\}$, where
	$l_i=\begin{cases}
	0, & \widehat{i}\cap p(k)=\emptyset,\\
	0, & i=k,\\
	d-1, & otherwise.
	\end{cases}$
\end{Def}

\begin{Rem}\label{basic-facts}
	From the above definition, the cardinalities of  $\mathcal{L}_{p,k}$ and $\mathcal{S}_{p,k}$ are equal to the number of non-isomorphic simple $A_n^{dn}$-modules. If $d\geq 2$, we have the following facts about $\mathcal{L}_{p,k}$ and $\mathcal{S}_{p,k}$.
	Let $p\in \mathcal{P}$ and $1\leq k\leq n$ be given. The modules $M^i_{k_i,l_i}$ with $i\in p(k)$ in $\mathcal{L}_{p,k}$ satisfy $l_i=d-1$ and for each block $p(t)$ different from $p(k)$, there exists a unique module $M^i_{k_i,l_i}$ in $\mathcal{L}_{p,k}$ satisfying $i\in p(t)$ and $l_i=0$. Moreover, for each block $p(t)$, there exists a unique module $M^i_{k_i,l_i}$ in $\mathcal{S}_{p,k}$ satisfying $i\in p(t)$ and $l_i=0$.
\end{Rem}

\begin{Thm}\label{sms-description}
	Let $A_n^{dn}$ be a symmetric Nakayama algebra and
	$\mathcal{P}$ the set of non-crossing partitions of the
	set $\underline{n}$. Then we have the following.
	
	\begin{enumerate}[$(a)$]
		\item For any $p\in \mathcal{P}$ and any $1\leq k\leq n$, $\mathcal{L}_{p,k}$ and $\mathcal{S}_{p,k}$ are sms's.
		\item All sms's of $A_n^{dn}$ are of these forms.
		\item If $d\geq 2$, then for $p, p'\in \mathcal{P}$ and $1\leq k,k'\leq n$, we have the following results.
		\begin{enumerate}[$(1)$]
			\item $\mathcal{L}_{p,k}\neq \mathcal{S}_{p',k'}$.
			\item $\mathcal{L}_{p,k}=\mathcal{L}_{p',k'}$ if and only if $p=p'$ and $p(k)=p(k')$.
			\item $\mathcal{S}_{p,k}=\mathcal{S}_{p',k'}$ if and only if the following three conditions hold: (i) $p=p'$; (ii) $k=k'$ or $\widehat{k}\cap \widehat{k'}=\emptyset$; (iii) $p_{k\vee k'}\in \mathcal{P}$, where $p_{k\vee k'}(i)=\begin{cases}
			p(k)\cup p(k'), & i\in p(k)\cup p(k'),\\
			p(i), & otherwise.\\
			\end{cases}$
		\end{enumerate}
	\end{enumerate}
\end{Thm}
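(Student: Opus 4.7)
The plan is to verify the three parts of the theorem separately, combining Theorem \ref{main-thm}, Proposition \ref{orthogonal-condition}, and Lemmas \ref{basic-property}--\ref{basic-property2}.

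For (a), I check the three criteria of Theorem \ref{main-thm} for the families $\mathcal{L}_{p,k}$ and $\mathcal{S}_{p,k}$. The cardinality is immediate from Definition \ref{two-type-def}, since each family contains one module per $i\in\underline{n}$. Nakayama-stability is automatic because $A_n^{dn}$ is symmetric, so $\nu\cong\mathrm{id}$. The main substance lies in orthogonality: for any two distinct modules $M^i_{k_i,l_i}$, $M^j_{k_j,l_j}$ in the family, I check conditions (a)--(c) of Proposition \ref{orthogonal-condition}. Conditions (a) and (b) are built directly into Definition \ref{two-type-def}. For (c), I identify which of the four arc configurations the pair lands in, using the anti-clockwise ordering from Corollary \ref{anticlockwise} encoded in the non-crossing partition $p$, and then verify the associated inequality on $l_i$ and $l_j$. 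The parameter $k$ is designed precisely so these inequalities hold: in the $\mathcal{L}$ case, whenever two arcs fall into picture (1) at least one of them meets $p(k)$, so $l_i+l_j>0$; in pictures (3)--(4) the nested arc touches $p(k)$ exactly when the surrounding one does; the $\mathcal{S}$ case is analogous, with the extra exception at $i=k$ accommodating the borderline configuration.

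For (b), I begin with an arbitrary sms $\mathcal{S}$ of $A_n^{dn}$ and use Corollary \ref{sms-partition} to produce its associated non-crossing partition $p$. Writing $p(i)=\{i, k_i, \ldots, k_i^{(s_i-1)}\}$ and $M_{ij}=M^{k_i^{(j)}}_{k_i^{(j+1)},l_{ij}}$, Corollary \ref{symmetric} forces each $l_{ij}\in\{0,d-1\}$. Lemma \ref{basic-property} gives at most one $l_{ij}=0$ per block, and Lemma \ref{basic-property2} promotes this into a dichotomy: either every block has exactly one zero, or exactly one block has no zero while every other has exactly one. In the second case, let $k$ index the exceptional block; I claim $\mathcal{S}=\mathcal{L}_{p,k}$. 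In the first case, pick $k$ so that $k$ itself is the unique $l=0$ position inside its own block, and I claim $\mathcal{S}=\mathcal{S}_{p,k}$. Each claim amounts to showing that in any other block $p(t)$ the unique zero occurs precisely at the $i\in p(t)$ with $\widehat{i}\cap p(k)=\emptyset$, which I handle by another arc analysis via Proposition \ref{orthogonal-condition} to pin down that unique position.

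For (c), observe first that $\mathcal{L}_{p,k}$ contains no $l=0$ module in the block $p(k)$, whereas every block of any $\mathcal{S}_{p',k'}$ contributes a zero; this yields (1). For (2) and (3), equality of the families forces the pairs $(i,k_i)$ to match and hence $p=p'$, reducing everything to matching the set of positions where $l_i=0$. For $\mathcal{L}$, this set is $\{i:\widehat{i}\cap p(k)=\emptyset\}$, and $p(k)$ is recovered as the complement of the union of these arcs, giving (2). For $\mathcal{S}$, the zero set is $\{k\}\cup\{i:\widehat{i}\cap p(k)=\emptyset\}$, and matching it against the analogous set for $k'$ forces both (ii) and (iii). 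The main obstacle is establishing (iii): I must show that merging the blocks $p(k)$ and $p(k')$ yields a non-crossing partition if and only if the two zero sets coincide. I plan to do this by an arc-picture argument, reading off the required non-crossing condition from the containment pattern of $\widehat{k}$, $\widehat{k'}$ and the other blocks of $p$, and using the non-crossing property of $p$ itself to reverse the implication.
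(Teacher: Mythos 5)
Your proposal matches the paper's own proof in all essential respects: part (a) is reduced to Theorem \ref{main-thm} (cardinality and Nakayama-stability being immediate) and then to checking the four arc configurations of Proposition \ref{orthogonal-condition}; part (b) uses Corollary \ref{sms-partition} together with the dichotomy coming from Lemmas \ref{basic-property} and \ref{basic-property2}; and part (c) hinges, as in the paper, on reading off the set of positions with $l_i=0$ and translating the equality of two such sets into the combinatorial conditions (ii)--(iii). The only cosmetic difference is that in (a) you organize the verification by arc picture while the paper organizes it by the four possible values of $(l_{i_1},l_{i_2})$ and lists which pictures can occur in each subcase; the underlying computation is the same.
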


\begin{proof}
	$(a)$ We only prove that $\mathcal{L}_{p,k}$ is an sms, since the proof for $\mathcal{S}_{p,k}$ is similar. Since the Nakayama functor $\nu$ is isomorphic to the identity functor, $\mathcal{L}_{p,k}$ is Nakayama-stable. By Theorem \ref{main-thm}, it is enough to show that any two objects $M^{i_1}_{k_{i_1},l_{i_1}}$ and $M^{i_2}_{k_{i_2},l_{i_2}}$ ($i_1\neq i_2$) in $\mathcal{L}_{p,k}$ ($p\in \mathcal{P}, 1\leq k\leq n$) form an orthogonal system in $A_n^{dn}$-$\stmod$. When $d=1$, $l_{i_1}=l_{i_2}=0$, since $p$ is a non-crossing partition, there are four cases about the arcs of $M^{i_1}_{k_{i_1},l_{i_1}}$ and $M^{i_2}_{k_{i_2},l_{i_2}}$ corresponding to the four diagrams of Proposition \ref{orthogonal-condition}. It follows from Remark \ref{d=1} that $\{M^{i_1}_{k_{i_1},l_{i_1}}, M^{i_2}_{k_{i_2},l_{i_2}}\}$ is an orthogonal system in $A_n^{dn}$-$\stmod$.
	
	When $d\geq 2$, by the definition of $\mathcal{L}_{p,k}$, we consider four cases (1)-(4). In each case it is straightforward to check by Proposition \ref{orthogonal-condition} that $\{M^{i_1}_{k_{i_1},l_{i_1}}, M^{i_2}_{k_{i_2},l_{i_2}}\}$ is an orthogonal system in $A_n^{dn}$-$\stmod$. We now list all the cases as follows:

	(1) $l_{i_1}=l_{i_2}=0$, that is, $\widehat{i_1}\cap p(k)=\emptyset$, $\widehat{i_2}\cap p(k)=\emptyset$. There are three subcases about the arcs of $M^{i_1}_{k_{i_1},l_{i_1}}$ and $M^{i_2}_{k_{i_2},l_{i_2}}$:
	\begin{table}[!h]
		\vspace{-0.6cm}
		\begin{small}
			\captionsetup{
				justification=raggedright,singlelinecheck=false}
			\subcaptionbox*{}{
				\begin{tikzpicture}
				\draw(-100:1)arc(-100:60:1);
				\fill (60:1)circle(2pt);
				\fill (-100:1)circle(2pt);
				\node[above left] (i) at (60:1) {$i_1$};
				\node[below right] (k_i) at (-100:1) {$k_{i_1}$};
				\draw(-130:1)arc(-130:-230:1);
				\fill (-130:1)circle(2pt);
				\fill (-230:1)circle(2pt);
				\node[above right] (j) at (-130:1) {$i_2$};
				\node[above right] (k_j) at (-230:1) {$k_{i_2}$};
				\end{tikzpicture}
			}
			\qquad
			\subcaptionbox*{}{
				\begin{tikzpicture}
				\draw(-100:1)arc(-100:60:1);
				\fill (60:1)circle(2pt);
				\fill (-100:1)circle(2pt);
				\node[left] (i) at (60:1) {$i_1$};
				\node[above left] (k_i) at (-100:1) {$k_{i_1}$};
				\draw(-60:1)arc(-60:30:1);
				\fill (-60:1)circle(2pt);
				\fill (30:1)circle(2pt);
				\node[right] (j) at (-60:1) {$k_{i_2}$};
				\node[right] (k_j) at (30:1) {$i_2$};
				\end{tikzpicture}}
			\qquad
			\subcaptionbox*{}{
				\begin{tikzpicture}
				\draw(-100:1)arc(-100:60:1);
				\fill (60:1)circle(2pt);
				\fill (-100:1)circle(2pt);
				\node[left] (i) at (60:1) {$i_2$};
				\node[above left] (k_i) at (-100:1) {$k_{i_2}$};
				\draw(-60:1)arc(-60:30:1);
				\fill (-60:1)circle(2pt);
				\fill (30:1)circle(2pt);
				\node[right] (j) at (-60:1) {$k_{i_1}$};
				\node[right] (k_j) at (30:1) {$i_1$};
				\node[right] (k_j) at (-45:2) {$.$};
				\end{tikzpicture}}
		\end{small}
	\end{table}	
	
    (2) $l_{i_1}=0,l_{i_2}=d-1$, that is, $\widehat{i_1}\cap p(k)=\emptyset$, $\widehat{i_2}\cap p(k)\neq\emptyset$. There are three subcases about the arcs of $M^{i_1}_{k_{i_1},l_{i_1}}$ and $M^{i_2}_{k_{i_2},l_{i_2}}$:
	\begin{table}[!h]
		\vspace{-0.6cm}
		\begin{small}
			\captionsetup{
				justification=raggedright,singlelinecheck=false}
			\subcaptionbox*{}{
				\begin{tikzpicture}
				\draw(-100:1)arc(-100:60:1);
				\fill (60:1)circle(2pt);
				\fill (-100:1)circle(2pt);
				\node[left] (i) at (60:1) {$i_1$};
				\node[left] (k_i) at (-100:1) {$k_{i_1}$};
				\draw(-80:0.6)arc(-80:-340:0.6);
				\fill (-80:0.6)circle(2pt);
				\fill (-340:0.6)circle(2pt);
				\node[right] (j) at (-80:0.6) {$i_2$};
				\node[left] (k_i) at (-340:0.6) {$k_{i_2}$};
				\end{tikzpicture}
			}
			\qquad
			\subcaptionbox*{}{
				\begin{tikzpicture}
				\draw(-100:1)arc(-100:60:1);
				\fill (60:1)circle(2pt);
				\fill (-100:1)circle(2pt);
				\node[above left] (i) at (60:1) {$i_1$};
				\node[below right] (k_i) at (-100:1) {$k_{i_1}$};
				\draw(-130:1)arc(-130:-230:1);
				\fill (-130:1)circle(2pt);
				\fill (-230:1)circle(2pt);
				\node[above right] (j) at (-130:1) {$i_2$};
				\node[above right] (k_j) at (-230:1) {$k_{i_2}$};
				\end{tikzpicture}
			}
			\qquad
			\subcaptionbox*{}{
				\begin{tikzpicture}
				\draw(-100:1)arc(-100:60:1);
				\fill (60:1)circle(2pt);
				\fill (-100:1)circle(2pt);
				\node[left] (i) at (60:1) {$i_2$};
				\node[above left] (k_i) at (-100:1) {$k_{i_2}$};
				\draw(-60:1)arc(-60:30:1);
				\fill (-60:1)circle(2pt);
				\fill (30:1)circle(2pt);
				\node[right] (j) at (-60:1) {$k_{i_1}$};
				\node[right] (k_j) at (30:1) {$i_1$};
				\node[right] (k_j) at (-45:2) {$.$};
				\end{tikzpicture}}
		\end{small}
	\end{table}
	
	(3) $l_{i_1}=d-1,l_{i_2}=0$, that is, $\widehat{i_1}\cap p(k)\neq\emptyset$, $\widehat{i_2}\cap p(k)=\emptyset$. This is similar to Case (2).
	
	(4) $l_{i_1}=l_{i_2}=d-1$, that is, $\widehat{i_1}\cap p(k)\neq\emptyset$, $\widehat{i_2}\cap p(k)\neq\emptyset$. There are three subcases about the arcs of $M^{i_1}_{k_{i_1},l_{i_1}}$ and $M^{i_2}_{k_{i_2},l_{i_2}}$:
	
	\begin{table}[!h]
		\vspace{-1.2cm}
		\begin{small}
			\captionsetup{
				justification=raggedright,singlelinecheck=false}
			\subcaptionbox*{}{
				\begin{tikzpicture}
				\draw(-100:1)arc(-100:60:1);
				\fill (60:1)circle(2pt);
				\fill (-100:1)circle(2pt);
				\node[left] (i) at (60:1) {$i_1$};
				\node[left] (k_i) at (-100:1) {$k_{i_1}$};
				\draw(-80:0.6)arc(-80:-340:0.6);
				\fill (-80:0.6)circle(2pt);
				\fill (-340:0.6)circle(2pt);
				\node[right] (j) at (-80:0.6) {$i_2$};
				\node[left] (k_i) at (-340:0.6) {$k_{i_2}$};
				\end{tikzpicture}
			}
			\qquad
			\subcaptionbox*{}{
				\begin{tikzpicture}
				\draw(-100:1)arc(-100:60:1);
				\fill (60:1)circle(2pt);
				\fill (-100:1)circle(2pt);
				\node[left] (i) at (60:1) {$i_1$};
				\node[above left] (k_i) at (-100:1) {$k_{i_1}$};
				\draw(-60:1)arc(-60:30:1);
				\fill (-60:1)circle(2pt);
				\fill (30:1)circle(2pt);
				\node[right] (j) at (-60:1) {$k_{i_2}$};
				\node[right] (k_j) at (30:1) {$i_2$};
				\end{tikzpicture}}
			\qquad
			\subcaptionbox*{}{
				\begin{tikzpicture}
				\draw(-100:1)arc(-100:60:1);
				\fill (60:1)circle(2pt);
				\fill (-100:1)circle(2pt);
				\node[left] (i) at (60:1) {$i_2$};
				\node[above left] (k_i) at (-100:1) {$k_{i_2}$};
				\draw(-60:1)arc(-60:30:1);
				\fill (-60:1)circle(2pt);
				\fill (30:1)circle(2pt);
				\node[right] (j) at (-60:1) {$k_{i_1}$};
				\node[right] (k_j) at (30:1) {$i_1$};
				\node[right] (k_j) at (-45:2) {$.$};
				\end{tikzpicture}}
		\end{small}
	\end{table}
	
	$(b)$ By Corollary \ref{sms-partition}, any sms $\mathcal{S}$ of $A_n^{dn}$ determines a non-crossing partition $p$ in $\mathcal{P}$. For $1\leq i\leq n$, we denote by $p(i)$ the block which $i$ belongs to. Then we can assume that $p(i)=\{i, k_i, k_i^{(2)},\cdots,k_i^{(s_i-1)}\}$ such that there exists object $M_{ij}$ in $\mathcal{S}$ satisfying $top(M_{ij})\cong S_{k_i^{(j)}}$ and $soc(M_{ij})\cong S_{k_i^{(j+1)}}$ for each $0\leq j\leq s_i-1$, where $k_i^{(0)}=k_i^{(s_i)}=i, k_i^{(1)}=k_i$. Notice that by our notation $M_{ij}=M_{k_i^{(j+1)}, l_{ij}}^{k_i^{(j)}}$ for $0\leq j\leq s_i-1$ (cf. Lemma \ref{basic-property} and Lemma \ref{basic-property2}).
	
	If there is a block $p(i)$ satisfying $l_{ij}=d-1$ for each $0\leq j\leq s_i-1$, then from the proof of Lemma \ref{basic-property2}, we know for each block $p(s)$ that  $ l_{st}=\begin{cases}
	0, & \widehat{k_s^{(t)}}\cap p(i)=\emptyset,\\
	d-1, & otherwise.\\
	\end{cases}$ Therefore $\mathcal{S}=\mathcal{L}_{p,i}$.
	
	If there is no block $p(i)$ satisfying $l_{ij}=d-1$ for each $0\leq j\leq s_i-1$, suppose that $p(i_1), p(i_2),\cdots,p(i_k)$ are all blocks,
	by Lemma \ref{basic-property}, without loss of generality, we assume $l_{i_t0}=0$ for any block $p(i_t)$. We have $l_{i_tj}=d-1$ for $j$ different from $0$. Then there exists some $i$ in $\{i_1, i_2,\cdots,i_k\}$ satisfying $\widehat{i_t}\cap p(i)=\emptyset$ for any $i_t$ different from $i$.
	It follows easily that $\mathcal{S}$ must have the form $\mathcal{S}_{p,i}$.
	
	$(c)$ (1) This follows easily from Remark \ref{basic-facts}. More precisely, if $d\geq2$, for each block $p'(t)$, there exists a unique object $M^i_{k_i,l_i}$ in $\mathcal{S}_{p',k'}$ with $i\in p'(t)$ and $l_i=0$ , however, all the modules $M^i_{k_i,l_i}$ in $\mathcal{L}_{p,k}$ which correspond to the block $p(k)$ satisfy $l_i=d-1$ for any $i$ in this block. Therefore $\mathcal{L}_{p,k}\neq \mathcal{S}_{p',k'}$ for any $p,p',k,k'$.
	
	(2) If $p=p'$, $p(k)=p(k')$, then by definitions of $\mathcal{L}_{p,k}$ and $\mathcal{L}_{p,k'}$, we have $\mathcal{L}_{p,k}=\mathcal{L}_{p',k'}$.
	
	If $\mathcal{L}_{p,k}=\mathcal{L}_{p',k'}$, then $p=p'$. Otherwise, there exist modules $M^i_{k_i,l_i}$ in $\mathcal{L}_{p,k}$ and $M^i_{k'_i,l'_i}$ in $\mathcal{L}_{p',k'}$ with $k_i\neq k'_i$, and therefore $M^i_{k_i,l_i}\neq M^i_{k'_i,l'_i}$, this contradicts the fact that $\mathcal{L}_{p,k}=\mathcal{L}_{p',k'}$. Assume now that $\mathcal{L}_{p,k}=\mathcal{L}_{p,k'}$. We have that $l_i=d-1$ for the modules $M^i_{k_i,l_i}$ in $\mathcal{L}_{p,k}$, where $i$ is in $p(k)$ or $i$ is in $p(k')$. By Lemma \ref{basic-property2}, there is only one such block for $\mathcal{L}_{p,k}$. Therefore $p(k)=p(k')$.
	
	(3) If $\mathcal{S}_{p,k}=\mathcal{S}_{p',k'}$, then $p=p'$. Otherwise, there exist modules $M^i_{k_i,l_i}\in \mathcal{S}_{p,k}$ and $M^i_{k'_i,l'_i}\in \mathcal{S}_{p',k'}$ with $k_i\neq k'_i$, and therefore $M^i_{k_i,l_i}\neq M^i_{k'_i,l'_i}$, this contradicts the fact that $\mathcal{S}_{p,k}=\mathcal{S}_{p',k'}$. Assume now that $\mathcal{S}_{p,k}=\mathcal{S}_{p,k'}$ and $k\neq k'$. By definition of $\mathcal{S}_{p,k}$, $\mathcal{S}_{p,k}=\mathcal{S}_{p,k'}$ if and only if the following conditions hold:
	
	$\widehat{k}\cap p(k')=\emptyset$; $\widehat{k'}\cap p(k)=\emptyset$; $\widehat{i}\cap p(k)=\emptyset$ if and only if $\widehat{i}\cap p(k')=\emptyset$ for $i\neq k,k'$.
	
	The first two conditions are equivalent to $\widehat{k}\cap \widehat{k'}=\emptyset$. Moreover, the last condition implies that $p_{k\vee k'}$ is also a non-crossing partition. Conversely, if $p_{k\vee k'}$ is a non-crossing partition, then clearly the last condition holds.
\end{proof}

\begin{Rem}\label{special}
$(1)$ Notice that the partition associated with the sms $\mathcal{S}_{p,k}$ or $\mathcal{L}_{p,k}$ (as discussed in Subsection 4.1) is exactly the partition $p$.	

$(2)$ For an equivalent formulation of the condition $\mathcal{S}_{p,k}=\mathcal{S}_{p',k'}$, see Remark \ref{an-equivalent-formulation}.
	
$(3)$ For $1\leq k,k'\leq n$, if $d=1$, then $\mathcal{L}_{p,k}=\mathcal{L}_{p,k'}=\mathcal{S}_{p,k}=\mathcal{S}_{p,k'}$.
\end{Rem}

\begin{Ex}\label{symmetric-ex}
We describe the sms's of the symmetric Nakayama algebra $A_2^6$  using the set $\mathcal{P}$ of non-crossing partitions of $\underline{2}$.
	Since $\mathcal{P}=\{p_1,p_2\}$, where $p_1=\{\{1\},\{2\}\}$, $p_2=\{\{1,2\}\}$, we can directly write down all sms's of $A_2^6$ from the definitions of $\mathcal{L}_{p,k}$ and $\mathcal{S}_{p,k}$:
$$\mathcal{L}_{p_1,1}=\left \{M^1_{1,2},~ M^2_{2,0}\right\}=\left \{\begin{matrix}1 \\2\\1\\2\\1\end{matrix},~~2 \right\},\quad \mathcal{L}_{p_1,2}=\{M^1_{1,0},M^2_{2,2}\}=\left\{1,~~\begin{matrix}2 \\1\\2\\1\\2\end{matrix}\right \}, \quad\mathcal{S}_{p_2,1}=\{ M^1_{2,0},~ M^2_{1,2}\}=\left\{\begin{matrix}1\\2\end{matrix} ,~~\begin{matrix}2\\1\\2\\1\\2\\1 \end{matrix} \right \}, $$	

$$\mathcal{S}_{p_2,2}=\{M^1_{2,2},~ M^2_{1,0}\}=\left\{\begin{matrix}1\\2\\1\\2\\1\\2 \end{matrix},~~\begin{matrix}2\\1\end{matrix} \right \}, \quad \mathcal{L}_{p_2,1}=\mathcal{L}_{p_2,2}=\{M^1_{2,2},~ M^2_{1,2}\}=\left\{\begin{matrix}1\\2\\1\\2\\1\\2\end{matrix},~~\begin{matrix}2\\1\\2\\1\\2\\1 \end{matrix} \right \},$$
$$ \mathcal{S}_{p_1,1}=\mathcal{S}_{p_1,2}=\{M^1_{1,0},~M^2_{2,0}\}=\{1,~~2\}.$$
\end{Ex}

In the following, using covering theory, we describe the sms's of self-injective Nakayama algebra $A_n^{\ell}$. We first recall some notions.

\begin{Def}$($\cite[Definition 1.3]{BG}$)$ \label{covering}
		A translation-quiver morphism $f\colon \Delta\rightarrow\Gamma$ is called a covering if for each point $p\in\Delta_{0}$ the induced maps $p^{-}\rightarrow f(p)^{-}$ and $p^{+}\rightarrow f(p)^{+}$ are bijection. Furthermore, $\tau(p)$ and $\tau^{-}(q)$ should be defined if $\tau(f(p))$ and $\tau^{-}(f(q))$ are respectively so (of course, since f is a translation-quiver morphism, we have $f(\tau(p))=\tau(f(p))$ whenever $\tau(p)$ is defined).
\end{Def}
	
\begin{Def}$($\cite[Definition 3.1]{BG}$)$ \label{covering-functor}
		Let $F:\mathcal{C}\rightarrow \mathcal{D}$ be a k-linear functor between two k-categories. F is called a covering functor if the maps
		$$     \coprod_{z/b} \mathcal{C}(x,z)\rightarrow \mathcal{D}(a,b) \text{ and }  \coprod_{t/a} \mathcal{C}(t,y)\rightarrow \mathcal{D}(a,b),$$
		which are induced by $F$, are bijective for any two objects $a$ and $b$ of $\mathcal{D}$. Here $t$ and $z$ range over all objects of $\mathcal{C}$ such that $Ft=a$ and $Fz=b$ respectively; the maps are supposed to be bijective for all $x$ and $y$ chosen among the $t$ and $z$ respectively.
\end{Def}

\begin{Lem}\label{covering-A_{n}} Let $A=A_{n}^{\ell}$ and $B=A_{e}^{\ell}$ be two self-injective Nakayama algebras such that $e$ is the greatest common divisor of $n$ and $\ell$. Then there is a covering of stable translation quivers $\pi\colon_{s}\Gamma_{A}\longrightarrow_{s}\Gamma_{B}\cong_{s}\Gamma_{A}/\langle \nu\rangle$ (where $\nu$ is the Nakayama automorphism of $_{s}\Gamma_{A}$), which induces a covering functor $F\colon$$A$-$\stmod\longrightarrow B$-$\stmod$.
Consequently, if $\mathcal{S}$ is an orthogonal system in $B$-$\stmod$, then $\mathcal{S}$ is an sms of $B$-$\stmod$ if and
only if $F^{-1}(\mathcal{S})$ is an sms of $A$-$\stmod$. Moreover, if $\mathcal{S}'$ is a Nakayama-stable orthogonal system in $A$-$\stmod$, then $F^{-1}(F(\mathcal{S}'))=\mathcal{S}'$ and therefore $\mathcal{S}'$ is an sms of $A$-$\stmod$ if and only if  $F(\mathcal{S}')$ is an sms of $B$-$\stmod$. In particular, there is a bijection between the sms's of $A$-$\stmod$ and the sms's of $B$-$\stmod$ induced by $F$.
\end{Lem}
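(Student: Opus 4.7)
The plan is to first exhibit the covering at the level of stable translation quivers, then invoke the standard Bongartz--Gabriel machinery to pass to a covering functor between the corresponding mesh categories, and finally combine the covering-functor identity with the sms criterion of Theorem~\ref{main-thm}.

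By Riedtmann's structure theorem the stable Auslander--Reiten quiver is ${}_s\Gamma_A=\mathbb{Z}A_\ell/\langle\tau^n\rangle$, and by Lemma~\ref{Nakayama-functor} the Nakayama automorphism on the non-projective component is $\nu=\tau^{-\ell}$. Inside the cyclic group $\langle\tau\rangle/\langle\tau^n\rangle\cong\mathbb{Z}/n\mathbb{Z}$, the class of $\tau^\ell$ generates a subgroup of order $n/e$ (cf.\ Lemma~\ref{orbit}), and $\langle\tau^n,\tau^\ell\rangle=\langle\tau^e\rangle$. Hence the (free) quotient by $\langle\nu\rangle$ yields ${}_s\Gamma_A/\langle\nu\rangle=\mathbb{Z}A_\ell/\langle\tau^e\rangle\cong{}_s\Gamma_B$, and the quotient map $\pi$ is a covering of translation quivers in the sense of Definition~\ref{covering}. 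Since $A$ and $B$ are representation-finite self-injective, their stable categories are equivalent to the mesh categories of their stable AR-quivers, and it is a standard fact of Bongartz--Gabriel that a covering of translation quivers lifts to a covering functor $F\colon A\text{-}\stmod\to B\text{-}\stmod$ of the associated mesh categories.

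For the remaining statements the key identity is the covering-functor formula of Definition~\ref{covering-functor}: for any $M,M'\in A\text{-}\stmod$,
\[ \stHom_B(F(M),F(M'))=\bigoplus_{N\in F^{-1}(F(M'))}\stHom_A(M,N)=\bigoplus_{N\in O_\nu(M')}\stHom_A(M,N). \]
Coupled with two easy observations---$B=A_e^\ell$ is symmetric (since $e\mid\ell$), so every subset of $B\text{-}\stmod$ is automatically Nakayama-stable; and $F^{-1}(\mathcal{S})$ is automatically $\nu$-stable---this reduces each equivalence to bookkeeping. If $\mathcal{S}\subset B\text{-}\stmod$ is orthogonal, then applying the formula with $F(M)=F(M')$ and using that the $N=M$ summand contains the identity forces $\stHom_A(M,M)=k$ and every other summand, in particular $\stHom_A(M,N)$ for $N\in O_\nu(M)\setminus\{M\}$, to vanish; the case $F(M)\neq F(M')$ gives $\stHom_A(M,M')=0$ directly, so $F^{-1}(\mathcal{S})$ is orthogonal in $A$. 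Conversely, if $\mathcal{S}'$ is orthogonal and Nakayama-stable in $A$, the same formula (evaluated on representatives) shows that $F(\mathcal{S}')$ is orthogonal in $B$. The cardinality condition of Theorem~\ref{main-thm} transfers via $|F^{-1}(\mathcal{S})|=(n/e)\,|\mathcal{S}|$, matching the numbers $n$ and $e$ of non-projective simples of $A$ and $B$. Combining these with Theorem~\ref{main-thm} proves both sms equivalences, and the bijection then follows from $F^{-1}(F(\mathcal{S}'))=\mathcal{S}'$ (which is precisely Nakayama-stability of $\mathcal{S}'$) together with the surjectivity of $\pi$ on objects.

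The only genuinely non-routine step is the identification of ${}_s\Gamma_A/\langle\nu\rangle$ with ${}_s\Gamma_B$ as a \emph{translation} quiver---verifying that $\tau$ descends correctly and matches the translation of ${}_s\Gamma_B$; after that, everything is a direct application of the covering-functor identity and Theorem~\ref{main-thm}, so the written proof can cite Bongartz--Gabriel as a black box and handle the combinatorial check in a few lines.
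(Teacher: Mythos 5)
Your proposal is correct and follows essentially the same route as the paper: a covering of stable translation quivers yields a covering functor between mesh categories (identified with the stable categories via standardness), and the covering-functor formula $\coprod_{F(c)=F(b)}\stHom_A(a,c)\cong\stHom_B(F(a),F(b))$ combined with Theorem~\ref{main-thm} transfers the sms property. Where you add value is in making two steps that the paper leaves terse fully explicit: (i) the identification ${}_s\Gamma_A/\langle\nu\rangle\cong{}_s\Gamma_B$ via the computation $\langle\tau^n,\tau^\ell\rangle=\langle\tau^e\rangle$ inside $\mathbb{Z}A_\ell$, whereas the paper simply asserts ``Clearly there is a covering''; and (ii) the verification that the three conditions of Theorem~\ref{main-thm} (orthogonality, cardinality $|F^{-1}(\mathcal S)|=(n/e)|\mathcal S|$ against the counts $n$ and $e$ of non-projective simples, and automatic Nakayama-stability on both sides) transfer under $F$, which in the paper is compressed into a ``Hence'' and a closing appeal to $F$ being a covering functor. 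The paper's one step you reproduce verbatim is the argument that $F^{-1}(F(\mathcal{S}'))=\mathcal{S}'$ precisely because $\mathcal{S}'$ is $\nu$-stable; everything else in your writeup is a legitimate expansion rather than a different method.
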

\begin{proof} 
Clearly there is a covering of stable translation quivers $\pi\colon$$_{s}\Gamma_{A}\longrightarrow$ $_{s}\Gamma_{B}\cong$ $_{s}\Gamma_{A}/\langle \nu\rangle$, where $\nu$ is the Nakayama automorphism of  $_{s}\Gamma_{A}$. It follows that there is a covering functor between the corresponding mesh categories (cf. \cite[Section 2]{Riedtmann1})  $k(_{s}\Gamma_{A})$ and $k(_{s}\Gamma_{B})$. On the other hand, since $A$ and $B$ are standard representation-finite self-injective algebras (cf. \cite[Section 2]{A1}), we can identify $k(_{s}\Gamma_{A})$ and $k(_{s}\Gamma_{B})$ with $A$-$\stind$ and $B$-$\stind$, respectively (cf. \cite[Section 3]{CKL}). Therefore we get a covering functor $A$-$\stind\longrightarrow B$-$\stind$, which extends to a covering functor F: $A$-$\stmod\longrightarrow B$-$\stmod$ satisfying that $F^{-1}(Y)$ is the $\nu$-orbit of $X$ for any object $Y$ in $B$-$\stind$, where $F(X)=Y$ for some object $X$ in $A$-$\stind$. Hence, for an orthogonal system $\mathcal{S}$ in $B$-$\stmod$, $\mathcal{S}$ is an sms of $B$-$\stmod$ if and only if $F^{-1}(\mathcal{S})$ is an sms of $A$-$\stmod$. Notice that $F(F^{-1}(\mathcal{S}))=\mathcal{S}$.

We next show  $F^{-1}(F(\mathcal{S}'))=\mathcal{S}'$ for any Nakayama-stable orthogonal system $\mathcal{S}'$ in $A$-$\stmod$. It is easy to see  $\mathcal{S}'\subseteq F^{-1}(F(\mathcal{S}'))$. On the other hand, for an object $X$ in $F^{-1}(F(\mathcal{S}'))$, there is an object $Y$ in $\mathcal{S}'$ satisfying $F(X)=F(Y)$ and therefore $X$ is in the $\nu$-orbit of $Y$. Since  $\mathcal{S}'$ is Nakayama-stable, $X$ is also in $\mathcal{S}'$ and $F^{-1}(F(\mathcal{S}'))\subseteq\mathcal{S}'$. Therefore $F^{-1}(F(\mathcal{S}'))= \mathcal{S}'$.

For a Nakayama-stable orthogonal system $\mathcal{S}'$ in $A$-$\stmod$, using the formula $\coprod\limits_{F(c)=F(b)} \underline{\Hom}_{A}(a,c)\cong \underline{\Hom}_{B}(F(a),F(b))$, we have that $F(\mathcal{S}')$ is an orthogonal system in $B$-$\stmod$. Since $F$ is a covering functor, $F(\mathcal{S}')$ is an sms  of $B$-$\stmod$ if and only if $F^{-1}(F(\mathcal{S}'))=\mathcal{S}'$ is an sms of  $A$-$\stmod$.

From the above discussion, we know that there is a bijection between the sms's of $A$-$\stmod$ and the sms's of $B$-$\stmod$ induced by $F$.
\end{proof}

By the above Lemma, for a self-injective Nakayama algebra $A_n^\ell$, we know that
$\mathcal{S}$ is an sms of $A_{e}^\ell$-$\stmod$ if and
only if $F^{-1}(\mathcal{S})$ is an sms of $A_n^\ell$-$\stmod$. Since $e$ divides $\ell$,  $A_{e}^\ell$ is a symmetric Nakayama algebra and therefore we have two types of sms's $\mathcal{L}_{p,k}$ and $\mathcal{S}_{p,k}$, where $p\in \mathcal{P}$, $1\leq k\leq e$, and $\mathcal{P}$ is the set of non-crossing partitions of $\underline{e}=\{1,2,\cdots,e\}$. Using the above covering functor we define two classes of objects in $A_n^\ell$-$\stmod$ as follows:
$$\mathcal{L}'_{p,k}:=F^{-1}(\mathcal{L}_{p,k}), \quad \quad  \mathcal{S}'_{p,k}:=F^{-1}(\mathcal{S}_{p,k}).$$
Notice that the covering functor $F$ is induced from a covering of stable Auslander-Reiten quivers $\pi\colon$$_{s}\Gamma_{A_n^\ell}\longrightarrow$ $_{s}\Gamma_{A_{e}^\ell}\cong$ $_{s}\Gamma_{A_n^\ell}/\langle \nu\rangle$ (where $\nu$ is the Nakayama automorphism of $_{s}\Gamma_{A_n^\ell}$), therefore it is very easy to construct $\mathcal{L}'_{p,k}$ and $\mathcal{S}'_{p,k}$ from $\mathcal{L}_{p,k}$ and $\mathcal{S}_{p,k}$ in practice. We have the following theorem.

\begin{Thm}\label{self-sms-description}
	Let $A_n^\ell$ be a self-injective Nakayama algebra and
	$\mathcal{P}$ the set of non-crossing partitions of $\underline{e}$, where $e$ is the greatest
	common divisor of $n$ and $\ell$. Then we have the following.
	\begin{enumerate}[$(a)$]
		\item For any $p\in \mathcal{P}$ and any $1\leq k\leq e$, $\mathcal{L}'_{p,k}$ and $\mathcal{S}'_{p,k}$ are sms's.
		\item All sms's of $A_n^\ell$ are of these forms.
		\item If $\ell/e\geq 2$, then for $p, p'\in \mathcal{P}$ and $1\leq k,k'\leq e$, then we have the following results.
		\begin{enumerate}[$(1)$]
			\item $\mathcal{L}'_{p,k}\neq \mathcal{S}'_{p',k'}$.
			\item $\mathcal{L}'_{p,k}=\mathcal{L}'_{p',k'}$ if and only if $p=p'$ and $p(k)=p(k')$.
			\item $\mathcal{S}'_{p,k}=\mathcal{S}'_{p',k'}$ if and only if the following three conditions hold: (i) $p=p'$; (ii) $k=k'$ or $\widehat{k}\cap \widehat{k'}=\emptyset$; (iii) $p_{k\vee k'}\in \mathcal{P}$, where $p_{k\vee k'}(i)=\begin{cases}
			p(k)\cup p(k'), & i\in p(k)\cup p(k'),\\
			p(i), & otherwise.\\
			\end{cases}$
		\end{enumerate}
	\end{enumerate}
\end{Thm}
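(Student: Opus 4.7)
The plan is to reduce the statement to the symmetric case (Theorem \ref{sms-description}) via the covering functor $F\colon A_n^\ell$-$\stmod\longrightarrow A_e^\ell$-$\stmod$ provided by Lemma \ref{covering-A_{n}}. Write $B:=A_e^\ell$; since $e\mid\ell$, the algebra $B$ is symmetric with $d:=\ell/e$, so Theorem \ref{sms-description} fully classifies its sms's into the families $\mathcal{L}_{p,k}$ and $\mathcal{S}_{p,k}$. By definition $\mathcal{L}'_{p,k}=F^{-1}(\mathcal{L}_{p,k})$ and $\mathcal{S}'_{p,k}=F^{-1}(\mathcal{S}_{p,k})$, so I expect each part of the theorem to follow from the corresponding part of Theorem \ref{sms-description} together with the transport properties of $F$ recorded in Lemma \ref{covering-A_{n}}.

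For part $(a)$, I would simply combine Theorem \ref{sms-description}$(a)$, which says that $\mathcal{L}_{p,k}$ and $\mathcal{S}_{p,k}$ are sms's of $B$-$\stmod$, with the implication in Lemma \ref{covering-A_{n}} that the preimage under $F$ of any sms of $B$-$\stmod$ is an sms of $A_n^\ell$-$\stmod$.

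For part $(b)$, I would take an arbitrary sms $\mathcal{S}'$ of $A_n^\ell$-$\stmod$. By Theorem \ref{main-thm} the set $\mathcal{S}'$ is Nakayama-stable, so Lemma \ref{covering-A_{n}} yields both that $F(\mathcal{S}')$ is an sms of $B$-$\stmod$ and that $F^{-1}(F(\mathcal{S}'))=\mathcal{S}'$. By Theorem \ref{sms-description}$(b)$ the image $F(\mathcal{S}')$ is of the form $\mathcal{L}_{p,k}$ or $\mathcal{S}_{p,k}$, and applying $F^{-1}$ then identifies $\mathcal{S}'$ with $\mathcal{L}'_{p,k}$ or $\mathcal{S}'_{p,k}$, which is the desired conclusion.

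For part $(c)$, I would observe that the hypothesis $\ell/e\geq 2$ is exactly the condition $d\geq 2$ for $B$, so Theorem \ref{sms-description}$(c)$ is applicable to $\mathcal{L}_{p,k}$ and $\mathcal{S}_{p,k}$. Since $F$ induces a bijection between the sms's of $A_n^\ell$-$\stmod$ and those of $B$-$\stmod$ (Lemma \ref{covering-A_{n}}), equality or inequality of two primed families is equivalent to the corresponding equality or inequality of the unprimed ones, so all three distinctness statements transfer verbatim from Theorem \ref{sms-description}$(c)$. The main obstacle is then little more than bookkeeping: confirming that $F^{-1}$ preserves the $\mathcal{L}$-versus-$\mathcal{S}$ distinction, which is immediate from the defining equalities $\mathcal{L}'_{p,k}=F^{-1}(\mathcal{L}_{p,k})$ and $\mathcal{S}'_{p,k}=F^{-1}(\mathcal{S}_{p,k})$ together with the injectivity of $F^{-1}$ on sms's, so no genuinely new argument beyond Theorem \ref{sms-description} and Lemma \ref{covering-A_{n}} should be needed.
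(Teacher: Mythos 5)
Your proposal is correct and follows essentially the same route as the paper: reducing to the symmetric algebra $A_e^\ell$ via the covering functor $F$ from Lemma \ref{covering-A_{n}}, then transporting each of the three parts of Theorem \ref{sms-description} across $F$. The only minor addition is your explicit appeal to Theorem \ref{main-thm} in part $(b)$ to justify Nakayama-stability before invoking $F^{-1}(F(\mathcal{S}'))=\mathcal{S}'$, a step the paper leaves implicit.
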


\begin{proof}
By Lemma \ref{covering-A_{n}}, there is a covering functor $F\colon$$A_n^{\ell}$-$\stmod\longrightarrow A_e^{\ell}$-$\stmod$.

$(a)$ For any $p\in \mathcal{P}$ and any $1\leq k\leq e$, $\mathcal{L}'_{p,k}=F^{-1}(\mathcal{L}_{p,k})$ and $\mathcal{S}'_{p,k}=F^{-1}(\mathcal{S}_{p,k})$. Since $\mathcal{L}_{p,k}$ and $\mathcal{S}_{p,k}$ are sms's of $A_e^{\ell}$-$\stmod$, by Lemma \ref{covering-A_{n}}, $\mathcal{L}'_{p,k}$ and $\mathcal{S}'_{p,k}$ are sms's of $A_n^\ell$-$\stmod$.

 $(b)$ Take an sms $\mathcal{S}'$ of $A_n^{\ell}$-$\stmod$, by Lemma \ref{covering-A_{n}}, $F(\mathcal{S}')$ is an sms of $A_e^{\ell}$-$\stmod$. By Theorem \ref{sms-description}, $F(\mathcal{S}')$ is $\mathcal{L}_{p,k}$ or $\mathcal{S}_{p,k}$ for some $p\in \mathcal{P}$ and some $1\leq k\leq e$. Moreover, by Lemma \ref{covering-A_{n}}, $F^{-1}(F(\mathcal{S}'))=\mathcal{S}'$ and therefore $\mathcal{S}'$ is $\mathcal{L}'_{p,k}$ or $\mathcal{S}'_{p,k}$ for some $p\in \mathcal{P}$ and some $1\leq k\leq e$.

$(c)$ By Lemma \ref{covering-A_{n}}, there is a bijection between the sms's of $A_n^{\ell}$-$\stmod$ and of $A_e^\ell$-$\stmod$ induced by $F$. By Theorem \ref{sms-description}, we have that the conditions $(1)$, $(2)$ and $(3)$ are satisfied.
\end{proof}

\begin{Ex}\label{self-injective-ex}
	We describe the sms's of self-injective Nakayama algebra $A_4^6$. We have known the sms's of $A_2^6$ in Example \ref{symmetric-ex}.  Let $\mathcal{P}=\{p_1,p_2\}$ be the set of non-crossing partitions of $\underline{2}$, where $p_1=\{\{1\},\{2\}\}$, $p_2=\{\{1,2\}\}$. Then we can easily write down all sms's of $A_4^6$ as follows.	
		
	$$\mathcal{L}'_{p_1,1}=\left \{\begin{matrix}1 \\2\\3\\4\\1\end{matrix},~~\begin{matrix}3\\4\\1\\2\\3\end{matrix},~~2,~~4 \right\},\quad \mathcal{S}'_{p_2,1}=\left\{\begin{matrix}1\\2\end{matrix},~~\begin{matrix}3\\4\end{matrix} ,~~\begin{matrix}2\\3\\4\\1\\2\\3 \end{matrix},~~ \begin{matrix}4\\1\\2\\3\\4\\1 \end{matrix}\right \}, \quad \mathcal{L}'_{p_1,2}=\left\{1,~~3,~~\begin{matrix}2 \\3\\4\\1\\2\end{matrix},~~\begin{matrix}4 \\1\\2\\3\\4\end{matrix}\right \},\quad \mathcal{S}'_{p_2,2}=\left\{\begin{matrix}1\\2\\3\\4\\1\\2 \end{matrix},~~\begin{matrix}3\\4\\1\\2\\3\\4 \end{matrix},~~ \begin{matrix}2\\3\end{matrix},~~\begin{matrix}4\\1\end{matrix}\right \},$$
	
\bigskip	
	$$\mathcal{L}'_{p_2,1}=\mathcal{L}'_{p_2,2}=\left\{\begin{matrix}1\\2\\3\\4\\1\\2\end{matrix},~~\begin{matrix}3\\4\\1\\2\\3\\4\end{matrix},~~
	\begin{matrix}2\\3\\4\\1\\2\\3 \end{matrix},~~\begin{matrix}4\\1\\2\\3\\4\\1 \end{matrix} \right \},\qquad \mathcal{S}'_{p_1,1}=\mathcal{S}'_{p_1,2}=\{1,~~3,~~2,~~4\}.$$	
\end{Ex}
\bigskip
 \begin{Rem} \label{types-of-sms's} By the observation in Remark \ref{basic-facts}, in the following we will call $\mathcal{L}_{p,k}$ (or
     $\mathcal{L}'_{p,k}$) an sms of long-type and $\mathcal{S}_{p,k}$ (or $\mathcal{S}'_{p,k}$) an sms of short-type.
 \end{Rem}

 \begin{Rem}
 Using some descriptions of non-crossing partitions from \cite{P}, Wenting Huang \footnote{ {\it Wenting Huang's email address:} 877977235@qq.com}, an undergraduate student from BNU, gives an algorithm to realize our construction of sms's over self-injective Nakayama algebras by computer \cite{H}.
\end{Rem}

\section{Sms's of $A_n^\ell$ under (co)syzygy functor}

This section is devoted to study the behavior of sms's over $A_n^\ell$ under (co)syzygy functor.

\subsection{The permutations over the set $\mathcal{P}$ of non-crossing partitions}

We fix some notations from previous sections. We denote by $\mathcal{P}$ the set of non-crossing
partitions of $\underline{n}=\{1, 2,\cdots,n\}$ and given $i\in \mathbb{Z}$, by $\bar{i}$ we denote the positive integer in $\underline{n}$ such that $i\equiv\bar{i}$ mod $n$. For $p\in \mathcal{P}$, we denote
by $p(i)$ the block which $i$ belongs to and we
assume that $p(i)=\{i, k_i, k_i^{(2)},\cdots,k_i^{(s_i-1)}\}$ such that
$i, k_i, k_i^{(2)},\cdots,k_i^{(s_i-1)}$ arrange anti-clockwisely on the associated circle (cf. Corollary \ref{anticlockwise}).

For any non-crossing partition $p\in \mathcal{P}$ and any $i\in \underline{n}$, we associate a subset $p'(i)$ of $\underline{n}$, where $p'(i)=\{i_{t_i},\cdots,i_1,i\}$ is defined as follows: $i=i_0=i_{t_{i}+1}$ and $i_m=\overline{k_{i_{m-1}}+1}$ for each $1\leq m\leq t_{i}+1$. Suppose that $t$ is the least number satisfying $i_s=i_t$ for some $0\leq s<t$. Then $i_t=i=i_0$, otherwise, by definition of $i_s$ and $i_t$, $\overline{k_{i_{s-1}}+1}=\overline{k_{i_{t-1}}+1}$, $k_{i_{s-1}}=k_{i_{t-1}}$, and therefore $i_{s-1}=i_{t-1}$, this is a contradiction. Thus $p'(i)$ is well-defined. Moreover, for the above $p$ and $i$, we associate another subset $p''(k_i)$ of $\underline{n}$, where $p''(k_i)=\{k_i,k_{i_1},\cdots,k_{i_{r_i}}\}$ is defined as follows: $i=i_0$, $k_i=k_{i_{r_i+1}}$ and $k_{i_n}=\overline{i_{n-1}-1}$ for each $1\leq n\leq r_i+1$. Similarly we can show that $p''(k_i)$ is well-defined.

\begin{Lem}\label{partition}
	For any $p\in \mathcal{P}$, $p'$ and $p''$ define two partitions of $\underline{n}$.
\end{Lem}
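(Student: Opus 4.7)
The plan is to reduce both assertions to the standard fact that a permutation of $\underline{n}$ decomposes its domain into disjoint orbits. First I introduce $\sigma\colon \underline{n}\to\underline{n}$, $\sigma(i):=k_i$, the ``next element'' anti-clockwise from $i$ inside the block $p(i)$. Since $p(i)=\{i,k_i,k_i^{(2)},\ldots,k_i^{(s_i-1)}\}$ is cyclically rotated by this rule, $\sigma$ restricts to a cyclic permutation on each block and is therefore a bijection of $\underline{n}$. Composing with the cyclic shift $j\mapsto\overline{j+1}$ yields a second permutation $\phi(i):=\overline{\sigma(i)+1}=\overline{k_i+1}$.

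For $p'$, the recursion $i_m=\overline{k_{i_{m-1}}+1}$ is exactly $i_m=\phi(i_{m-1})$, so the definition unwinds to $p'(i)=\{\phi^0(i),\phi^1(i),\ldots,\phi^{t-1}(i)\}$, where $t$ is the least positive integer with $\phi^t(i)=i$. The paragraph just before the lemma already verifies that the first coincidence $i_s=i_u$ with $s<u$ must have $s=0$: otherwise injectivity of $\phi$ applied to $i_{s-1}$ and $i_{u-1}$ would produce a smaller pair, contradicting minimality. Hence $p'(i)$ is precisely the $\phi$-orbit of $i$, and as $\phi$ is a permutation, these orbits form a partition of $\underline{n}$.

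For $p''$, I rewrite the recursion $k_{i_n}=\overline{i_{n-1}-1}$, that is $\sigma(i_n)=\overline{i_{n-1}-1}$, as $i_n=\sigma^{-1}(\overline{i_{n-1}-1})$. A direct check shows this map is $\phi^{-1}$: setting $\psi(x):=\sigma^{-1}(\overline{x-1})$, one has $\phi(\psi(x))=\overline{\sigma(\psi(x))+1}=\overline{\overline{x-1}+1}=x$. Consequently the sequence of subscripts appearing in the definition of $p''(k_i)$ is the $\phi$-orbit of $i$ traversed in reverse, and the well-definedness argument for $p'$ applies verbatim to $p''$. Therefore $p''(k_i)=\sigma(p'(i))$. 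Since $\sigma$ is a bijection and the $p'(i)$ partition $\underline{n}$, so do their images $\sigma(p'(i))$; moreover every element of $\underline{n}$ has the form $k_i=\sigma(i)$ for a unique $i$, so $p''$ is well-defined on $\underline{n}$ and yields a partition.

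The main ``obstacle'' is purely organizational: once $\phi$ has been isolated as a permutation, both halves of the lemma are immediate consequences of ``permutations have cycles''. The only genuine verification is that the first repetition in the defining sequence closes up at the starting index, but this is exactly the computation sketched in the paragraph preceding the lemma, reformulated as injectivity of $\phi$.
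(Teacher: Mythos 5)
Your proof is correct and takes essentially the same approach as the paper's terse one-line proof (``clear from the cyclic orderings''); you simply make the orbit structure explicit by isolating the permutation $\phi=\bigl(\,j\mapsto\overline{\sigma(j)+1}\,\bigr)$ whose orbits give $p'$ and, via $\sigma$-conjugation, $p''$. The observation $p''(k_i)=\sigma\bigl(p'(i)\bigr)$ is a nice economy that the paper leaves implicit.
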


\begin{proof} This is clear from the cyclic orderings in  $p'(i)$ and $p''(k_i)$.
\end{proof}

We illustrate the subsets $p'(i)$ and $p''(k_i)$ in the following two pictures:
\begin{table}[h]
	\vspace{-0.7cm}
	\begin{tiny}
	\begin{small}
		\captionsetup{
			justification=raggedright,singlelinecheck=false}
		\subcaptionbox*{}{\begin{tikzpicture}
			\draw(-180:1)arc(-180:180:1);
			\fill (90:1)circle(2pt);
			\fill (-90:1)circle(2pt);
			\fill (135:1)circle(2pt);
			\fill (-135:1)circle(2pt);
			\fill (-110:1)circle(2pt);
			\node[left] (i) at (135:1) {$i_{t_i}$};
			\node[above ] (i) at (90:1) {$i=i_0$};
			\node[below] (k_i) at (-90:1) {$k_i$};
			\node[left] (i) at (-110:1) {$i_1$};
			\node[left] (i) at (-135:1) {$i_2$};
			\end{tikzpicture},}
		\qquad \qquad \qquad
		\subcaptionbox*{}{
			\begin{tikzpicture}
			\draw(-180:1)arc(-180:180:1);
			\fill (90:1)circle(2pt);
			\fill (-90:1)circle(2pt);
			\fill (-135:1)circle(2pt);
			\fill (135:1)circle(2pt);
			\fill (110:1)circle(2pt);
			\node[left] (i) at (-135:1) {$k_{i_{r_i}}$};
			\node[above ] (i) at (90:1) {$i=i_0$};
			\node[below] (k_i) at (-90:1) {$k_i$};
			\node[left] (i) at (110:1) {$k_{i_1}$};
			\node[left] (i) at (135:1) {$k_{i_2}$};
			\end{tikzpicture}.}
	\end{small}	\end{tiny}
\end{table}

\begin{Lem}
	Let the partitions $p'$ and $p''$ be defined as above from the non-crossing partition $p$. Then $p'$ and $p''$ are non-crossing partitions.
\end{Lem}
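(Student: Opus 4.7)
The plan is to prove both assertions by contradiction, exploiting the fact that blocks of $p'$ are precisely the orbits of the permutation $\phi\colon i\mapsto\overline{k_i+1}$, which is immediate from the defining recursion $i_m=\overline{k_{i_{m-1}}+1}$. Suppose $p'$ has two crossing blocks; then there exist cyclically ordered elements $a,b,c,d\in\underline{n}$ with $a,c\in p'(a)$, $b,d\in p'(b)$ and $p'(a)\neq p'(b)$.

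First I would reduce to consecutive elements in the orbit. Joining $a$ to $c$ by a $\phi$-path $a=u_0,u_1,\ldots,u_m=c$, the endpoints $u_0,u_m$ lie on opposite arcs of the cyclic pair $\{b,d\}$, so some consecutive pair $u_j,u_{j+1}$ must also lie on opposite arcs. Replacing $(a,c)$ by this pair we may assume $c=\phi(a)$, hence $k_a=\overline{c-1}$ and $\gamma:=a\,\overline{c-1}$ is a genuine chord of $p$ (an edge of the polygon corresponding to the block $p(a)$).

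The core step: in the generic subcase $b\neq\overline{c-1}$, the chord $\gamma$ separates $b$ from $d$ in the disc. Along the $\phi$-orbit $b=v_0,v_1,\ldots,v_r=d$, each step decomposes as the $p$-chord $v_s\to k_{v_s}$ followed by the unit circle arc to $v_{s+1}=\overline{k_{v_s}+1}$. The non-crossing property of $p$ forbids the chord $v_s k_{v_s}$ from crossing $\gamma$, so it must stay on one side of $\gamma$ unless it shares an endpoint with $\gamma$. I would then enumerate the few possible scenarios in which the path actually switches sides of $\gamma$—namely when one of $v_s$, $k_{v_s}$, or $v_{s+1}$ coincides with $a$, $\overline{c-1}$, or $c$—and show that each scenario forces some element of $p'(b)$ to coincide with an element already known to lie in $p'(a)$ (most directly via $v_{s+1}=c$ when $k_{v_s}=\overline{c-1}$), contradicting $p'(a)\neq p'(b)$.

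The main obstacle is the careful bookkeeping of these boundary interactions, especially the tangent subcase $b=\overline{c-1}$ (so $b\in p(a)$) and the situations in which the orbit of $b$ grazes $\gamma$ at its endpoints; these are handled by combining the minimality achieved in the reduction step with the anti-clockwise cyclic order within $p(a)$ supplied by Corollary~\ref{anticlockwise}. The proof for $p''$ is completely parallel: its blocks are cycles of the analogous permutation $w\mapsto\overline{k^{-1}(w)-1}$ (where $k^{-1}(w)$ is the anti-clockwise predecessor of $w$ in $p(w)$), and the same chord-separation argument goes through with ``clockwise unit step'' replaced by ``counter-clockwise unit step''. Equivalently, $p''$ is obtained by applying the $p'$-construction to the partition $p$ read in the reverse cyclic order on $\underline{n}$, so its non-crossing property is immediate from that of $p'$.
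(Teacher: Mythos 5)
Your argument is essentially the paper's proof recast as a contradiction: both decompose the step $\phi(v)=\overline{k_v+1}$ into the $p$-chord $v\to k_v$ followed by a single unit step, and both invoke the non-crossing property of $p$ to confine the iterated $\phi$-orbit to the arc cut off by one $p$-chord (your $\gamma=a\,\overline{c-1}$ is exactly the paper's chord $i\to k_i$). The paper phrases this positively --- after the WLOG ``$j$ lies on $\wideparen{i\,i_1}$'' it simply iterates to conclude $p'(j)\subseteq\wideparen{i\,i_1}$ --- which absorbs your reduction to a consecutive $\phi$-pair and sidesteps most of the tangential subcases you flag; those subcases are indeed handleable via Corollary~\ref{anticlockwise}, but for the contradiction route to stand alone they do need to be written out rather than deferred.
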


\begin{proof}
	For any two different blocks $p'(i)$ and $p'(j)$, we assume that $p'(i)=\{i_{t_i},\cdots,i_1,i\}$, where $i=i_0=i_{t_{i}+1}$ and $i_m=\overline{k_{i_{m-1}}+1}$ for each $1\leq m\leq t_{i}+1$, and $p'(j)=\{j_{t_j},\cdots,j_1,j\}$, where $j=j_0=j_{t_{j}+1}$ and $j_m=\overline{k_{j_{m-1}}+1}$ for each $1\leq m\leq t_{j}+1$. Without loss of generality, we assume that $j$ is a vertex on the arc $\wideparen{ii_1}$, that is,
	$$
	\begin{tikzpicture}
	\vspace{-0.6cm}
	\draw(-180:1)arc(-180:180:1);
	\fill (90:1)circle(2pt);
	\fill (-90:1)circle(2pt);
	\fill (0:1)circle(2pt);
	\fill (135:1)circle(2pt);
	\fill (-180:1)circle(2pt);
	\fill (-135:1)circle(2pt);
	\node[right] (i) at (0:1) {$j$};
	\node[left] (i) at (135:1) {$i_{t_i}$};
	\node[above ] (i) at (90:1) {$i$};
	\node[below] (k_i) at (-90:1) {$i_1$};
	\end{tikzpicture}.$$
	
	Since $p$ is a non-crossing partition, we have that $k_j$ is a vertex on the arc $\wideparen{ik_i}$. Therefore $j_1=\overline{k_j+1}$ is a vertex on the arc from the vertex $i$ to the vertex $i_1$. Similarly, any vertex in $p'(j)$ is a vertex on the arc from the vertex $i$ to the vertex $i_1$. Thus $p'$ is a non-crossing partition.
	
	The proof for $p''$ is similar.
\end{proof}

From the above lemma, we get two non-crossing partitions $p'$ and $p''$ from any non-crossing partition $p$ in $\mathcal{P}$. This suggests the following two self-maps over the set $\mathcal{P}$:

\begin{gather*}
\mathcal{P}\xrightarrow{m_1}\mathcal{P}, \qquad \mathcal{P}\xrightarrow{m_2}\mathcal{P}.\\
p\longrightarrow p',\qquad p\longrightarrow p''.
\end{gather*}
It is easy to check that $m_1m_2=id$ and $m_2m_1=id$, and therefore $m_1$ and $m_2$ are inverse bijections over $\mathcal{P}$.

\begin{Ex} Consider the non-crossing partition $p=\{\{1,6,4\},\{2,3\},\{5\}\}$ of $\underline{6}$. By a direct computation, we have the following:
\begin{gather*}
p\xrightarrow{m_1}p'\xrightarrow{m_2}p\xrightarrow{m_2}p''\xrightarrow{m_1}p,
\end{gather*}

\noindent where $p'=\{\{1\},\{4,2\},\{3\},\{6,5\}\}$ and $p''=\{\{1,3\},\{2\},\{4,5\},\{6\}\}$.
	\end{Ex}

\subsection{The behaviors of sms's under $\Omega$ and $\Omega^{-1}$}

Recall that for any indecomposable $A_n^{dn}$-module $M$, if $top(M)\cong S_i$, $soc(M)\cong S_j$ and the multiplicity of $S_i$ in $M$ is $k+1$, then we denote $M$ by $M^i_{j,k}$. For any $A_n^{dn}$-module $M^i_{k_i,l_i}$, we have the following lemma about $\Omega(M^i_{k_i,l_i})$ and $\Omega^{-1}(M^i_{k_i,l_i})$, where $\Omega,~ \Omega^{-1}$ denote the syzygy and cosyzygy functors respectively.

\begin{Lem}\label{syzygy-top-socle}
	Let $A_n^{dn}$ be a symmetric Nakayama algebra and $M^i_{k_i,l_i}$ an indecomposable $A_n^{dn}$-module. Then $\Omega(M^i_{k_i,l_i})\cong M^{\overline{k_i+1}}_{i,d-l_i-1}$ and $\Omega^{-1}(M^i_{k_i,l_i})\cong M^{k_i}_{\overline{i-1},d-l_i-1}$, where $\Omega$, $\Omega^{-1}$ are the syzygy and cosyzygy functors respectively.
\end{Lem}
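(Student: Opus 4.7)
The plan is to exploit that $A_n^{dn}$ is a self-injective Nakayama algebra, so all indecomposables are uniserial, all indecomposable projectives have Loewy length $dn+1$, and for a symmetric Nakayama algebra the indecomposable projective $P_j$ has both top and socle isomorphic to $S_j$. Consequently every syzygy and cosyzygy computation reduces to bookkeeping of lengths, tops, and socles along the radical series of a single projective.

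First I would handle $\Omega(M^i_{k_i,l_i})$. Since $M=M^i_{k_i,l_i}$ has top $S_i$, its projective cover is $P_i$, and $\Omega(M)=\mathrm{rad}^{\ell(M)}(P_i)$. Using the length formula recalled in Section~2, $\ell(M)=nl_i+[k_i-i)+1$. Because $P_i$ is uniserial with top $S_i$ and socle $S_i$, the submodule $\mathrm{rad}^{\ell(M)}(P_i)$ has top $S_{\overline{i+\ell(M)}}=S_{\overline{k_i+1}}$ (the $n l_i$ in $\ell(M)$ disappears modulo $n$), while its socle remains the socle of $P_i$, namely $S_i$. Its length is $\ell(P_i)-\ell(M)=n(d-l_i)-[k_i-i)$. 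A direct modular check shows $[k_i-i)+[i-k_i-1)+1=n$ in all three cases $k_i=i$, $k_i>i$, $k_i<i$, so $n(d-l_i)-[k_i-i)=n(d-l_i-1)+[i-k_i-1)+1$, which is precisely the length of $M^{\overline{k_i+1}}_{i,\,d-l_i-1}$. Since an indecomposable $A_n^{dn}$-module is determined by its top, socle, and length, this forces $\Omega(M)\cong M^{\overline{k_i+1}}_{i,\,d-l_i-1}$.

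Next I would treat $\Omega^{-1}(M)$ by the dual argument. In a symmetric algebra the injective envelope of a module with socle $S_{k_i}$ is $P_{k_i}$, and $M$ embeds into $P_{k_i}$ as the unique submodule of length $\ell(M)$, namely $\mathrm{rad}^{\,dn+1-\ell(M)}(P_{k_i})$. Therefore $\Omega^{-1}(M)\cong P_{k_i}/\mathrm{rad}^{\,dn+1-\ell(M)}(P_{k_i})$, which is uniserial of length $dn+1-\ell(M)$ with top $S_{k_i}$ and socle $S_{\overline{k_i+(dn-\ell(M))}}=S_{\overline{i-1}}$ after reducing modulo $n$. The same type of arithmetic check, $[i-1-k_i)+[k_i-i)=n-1$, confirms the length equals that of $M^{k_i}_{\overline{i-1},\,d-l_i-1}$, and the uniseriality argument concludes.

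The only nontrivial step is the modular arithmetic identifying $[k_i-i)$ with its complementary quantities; I would separate the verification into the three cases $k_i=i$, $k_i>i$, $k_i<i$ to make it routine. No new representation-theoretic ingredient beyond uniseriality of indecomposables and the self-duality of $P_j$ in the symmetric setting is needed.
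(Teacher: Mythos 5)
Your proof is correct and follows essentially the same route as the paper's: realize $\Omega(M)$ as $\mathrm{rad}^{\ell(M)}(P_i)$, read off top and socle from uniseriality and symmetry, and then pin down the remaining parameter. The only (cosmetic) difference is that the paper determines the multiplicity $d-l_i$ of $S_{\overline{k_i+1}}$ by subtracting multiplicities in $M$ from those in $P_i$ (which forces a small case split on whether $\overline{k_i+1}=\overline i$), whereas you instead match total Loewy lengths via the identity $[k_i-i)+[i-k_i-1)=n-1$; the paper also dispatches $\Omega^{-1}$ by duality while you carry out the cosyzygy computation explicitly through the injective envelope $P_{k_i}$, which is a sound and slightly more self-contained variant of the same argument.
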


\begin{proof} We only prove $\Omega(M^i_{k_i,l_i})\cong M^{\overline{k_i+1}}_{i,d-l_i-1}$, since the other one is dual. There is a short exact sequence as follows:	
$$0\rightarrow \Omega(M^i_{k_i,l_i})\rightarrow P_i\rightarrow M^i_{k_i,l_i} \rightarrow 0,$$
	
\noindent where $P_i\longrightarrow M^i_{k_i,l_i}$ is the projective cover of $ M^i_{k_i,l_i}$.
	
We have $\Omega(M^i_{k_i,l_i})\cong rad^{nl_i+[k_i-i)+1}(P_i)$, where $[k_i-i)$ is the smallest non-negative integer with $[k_i-i)$=$(k_i-i)$ mod $n$. Therefore, $top(\Omega(M^i_{k_i,l_i}))\cong top(rad^{nl_i+[k_i-i)+1}(P_i))\cong S_{\overline{k_i+1}}$ and $soc(\Omega(M^i_{k_i,l_i}))\cong soc(P_i)\cong S_i$. Moreover, if $\overline{k_i+1}\neq\overline{i}$ (respectively $\overline{k_i+1}=\overline{i}$), then the multiplicity of $S_{\overline{k_i+1}}$ in $P_i$ is $d$ (respectively $d+1$) and the multiplicity of $S_{\overline{k_i+1}}$ in $M^i_{k_i,l_i}$ is $l_i$ (respectively $l_i+1$). Therefore the multiplicity of $S_{\overline{k_i+1}}$ in  $\Omega(M^i_{k_i,l_i})$ is $d-l_i$ and $\Omega(M^i_{k_i,l_i})\cong M^{\overline{k_i+1}}_{i,d-l_i-1}$. 	
\end{proof}

For $\mathcal{S}_{p,k}$, we define $\Omega(\mathcal{S}_{p,k})=\{\Omega(M^i_{k_i,l_i})|M^i_{k_i,l_i}\in \mathcal{S}_{p,k} \}$ and $\Omega^{-1}(\mathcal{S}_{p,k})=\{\Omega^{-1}(M^i_{k_i,l_i})|M^i_{k_i,l_i}\in \mathcal{S}_{p,k} \}$. Similarly, for $\mathcal{L}_{p,k}$, we can define $\Omega(\mathcal{L}_{p,k})$ and $\Omega^{-1}(\mathcal{L}_{p,k})$.
From the above lemma and notations, we have the following theorem.

\begin{Thm}\label{syzygy}
	Let $A_n^{dn}$ be a symmetric Nakayama algebra and
	$\mathcal{P}$ the set of non-crossing partitions of the set $\underline{n}$. For $p\in \mathcal{P}$ and $1\leq i\leq n$, let
	$\mathcal{L}_{p,i}$ and $\mathcal{S}_{p,i}$ be defined as in Definition \ref{two-type-def}. Moreover, let $m_1$ and $m_2$ be permutations over $\mathcal{P}$ defined as in Subsection 5.1, and let $\Omega,~\Omega^{-1}$ denote the syzygy and cosyzygy functors respectively.  Then we have the following.
	
	\begin{enumerate}[$(1)$]
		\item $\Omega(\mathcal{S}_{p,i})=\mathcal{L}_{p',i}$, where $p'=m_1(p)$.
		\item $\Omega^{-1}(\mathcal{L}_{p,i})=\mathcal{S}_{p'',i}$, where $p''=m_2(p)$.
		\item $\Omega^{-1}(\mathcal{S}_{p,i})=\mathcal{L}_{p'',k_i}$, where $k_i$ is defined as in Subsection 5.1 and $p''=m_2(p)$.
		\item $\Omega(\mathcal{L}_{p,i})=\mathcal{S}_{p',i_1}$, where $i_1=\overline{k_i+1}$ is defined as in Subsection 5.1 and $p'=m_1(p)$.
	\end{enumerate}
\end{Thm}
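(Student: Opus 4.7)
The plan is to apply Lemma \ref{syzygy-top-socle} module-by-module and match the transformed family against the combinatorial recipe for $\mathcal{L}_{p',i}$ or $\mathcal{S}_{p'',i}$, reading the partition data off the definitions of $m_1$ and $m_2$. Since parts $(1)$--$(4)$ are structurally parallel, I would outline $(1)$ in detail and indicate the pattern for the rest.

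For $(1)$, take an arbitrary $M^j_{k_j,l_j}\in\mathcal{S}_{p,i}$. Lemma \ref{syzygy-top-socle} gives $\Omega(M^j_{k_j,l_j})=M^{\overline{k_j+1}}_{\,j,\,d-l_j-1}$. As $j$ runs through $\underline{n}$, the assignment $j\mapsto\overline{k_j+1}$ is a bijection, so every new top $S_{j'}$ appears exactly once in $\Omega(\mathcal{S}_{p,i})$. The key combinatorial input, immediate from the definition of $m_1$, is that in the partition $p'=m_1(p)$ the anti-clockwise successor of $i_1=\overline{k_j+1}$ inside its block $p'(j)=\{j,i_1,\ldots,i_{t_j}\}$ is $j$ itself; equivalently $k'_{\overline{k_j+1}}=j$. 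This matches the socle $S_j$ of $\Omega(M^j_{k_j,l_j})$ with that prescribed by $\mathcal{L}_{p',i}$ at top $S_{\overline{k_j+1}}$.

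It remains to identify the Loewy parameters, that is to prove $d-l_j-1=l'_{\overline{k_j+1}}$, where $l'_{\cdot}$ is the value assigned by Definition \ref{two-type-def}$(1)$. Unwinding the definitions, this reduces to the equivalence
\[
\bigl(\widehat{j}\cap p(i)\neq\emptyset\ \text{and}\ j\neq i\bigr)\iff \widehat{i_1}\cap p'(i)=\emptyset,
\]
where on the right the arc $\widehat{i_1}$ is understood with respect to $p'$. Since $\widehat{j}\cup\widehat{i_1}=\underline{n}$ and $\widehat{j}\cap\widehat{i_1}=\{j\}$, and since the blocks of $p'$ are assembled from those of $p$ by the recursive recipe of $m_1$, this equivalence can be verified by a short case analysis on whether $j\in p(i)$ and, in that case, whether $j=i$; the non-crossing property of $p$ rules out interference from other blocks.

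Parts $(2)$--$(4)$ follow the same template with only bookkeeping changes. For $(2)$ one starts from $\Omega^{-1}(M^j_{k_j,l_j})=M^{k_j}_{\,\overline{j-1},\,d-l_j-1}$; by the definition of $m_2$, the anti-clockwise successor of $k_j$ inside its $p''$-block is $\overline{j-1}$, so the structural match proceeds verbatim. In $(3)$ and $(4)$ the second parameter of the target family shifts, and this is explained by tracking the module with top $S_i$: in $(3)$ the source module $M^i_{k_i,0}\in\mathcal{S}_{p,i}$ is sent by $\Omega^{-1}$ to $M^{k_i}_{\,\overline{i-1},\,d-1}$, which is the module of $\mathcal{L}_{p'',k_i}$ with top $S_{k_i}$; in $(4)$ the source module $M^i_{k_i,d-1}\in\mathcal{L}_{p,i}$ is sent by $\Omega$ to $M^{i_1}_{\,i,\,0}$, which is the distinguished module of $\mathcal{S}_{p',i_1}$. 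The main obstacle throughout is the Loewy verification of the third paragraph; arranging the case analysis so that the non-crossing property and the recursive definitions of $m_1$, $m_2$ close every case is where the substance of the proof lies.
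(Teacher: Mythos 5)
Your route is genuinely different from the paper's, and the difference is structural. The paper's proof of $(1)$ first observes that $\Omega(\mathcal{S}_{p,i})$ is again an sms (since $\Omega$ is a stable self-equivalence and sms's are preserved under stable equivalences), then uses Lemma \ref{syzygy-top-socle} to identify its associated non-crossing partition as $p'=m_1(p)$, and from there only needs to check the single block $p'(i)$: it verifies that $\widehat{j}\cap p(i)=\emptyset$ for $j\neq i$ in $p'(i)$, so every module in that block has Loewy parameter $d-1$, and then Remark \ref{basic-facts} together with the classification (Theorem \ref{sms-description}(b)) forces $\Omega(\mathcal{S}_{p,i})=\mathcal{L}_{p',i}$. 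Parts $(2)$ and $(4)$ are obtained for free by applying the inverse functor to $(1)$ and $(3)$. You instead do a direct module-by-module match against the recipe for $\mathcal{L}_{p',i}$ over \emph{all} blocks, which avoids invoking the classification but requires the full two-sided equivalence $\bigl(\widehat{j}\cap p(i)\neq\emptyset\ \text{and}\ j\neq i\bigr)\iff \widehat{\overline{k_j+1}}\cap p'(i)=\emptyset$. That equivalence is true and is the honest content of your proof, but you leave it at the level of a claim: the case split you indicate (``whether $j\in p(i)$ and, in that case, whether $j=i$'') does not actually partition the problem, since for $j\notin p(i)$ both $\widehat{j}\cap p(i)=\emptyset$ and $\widehat{j}\cap p(i)\neq\emptyset$ occur and must be handled separately against the recursive definition of $p'$. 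So the plan is sound and self-contained in a way the paper's is not, but it shifts all the weight onto a combinatorial lemma that is only sketched; the paper's shortcut (one block plus the classification machinery) sidesteps precisely that burden.
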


\begin{proof}
	$(1)$ Since $\Omega\colon A_n^{dn}\!-\!\stmod\rightarrow A_n^{dn}\!-\!\stmod$ is a stable equivalence, we have that $\Omega(\mathcal{S}_{p,i})$ is also an sms. By Lemma \ref{syzygy-top-socle}, the non-crossing partition corresponding to $\Omega(\mathcal{S}_{p,i})$ is exactly $p'=m_1(p)$. For any vertex $j$, we denote by $p(j)$ the block of the non-crossing partition $p$ that the vertex $j$ belongs to, let $p(j)=\{j, k_j, k_j^{(2)},\cdots, k_j^{(s_j-1)}\}$ and $\widehat{j}=\{j,\overline{j+1},\cdots,k_j\}$ as before. Notice that when $j$ is an element different from $i$ in $p'(i)$, we have $\widehat{j}\cap p(i)=\emptyset$. Let $M^j_{k_j,l_j}$ be an element in  $\mathcal{S}_{p,i}$. Since $top(M^j_{k_j,l_j})\cong S_j$ and $soc(M^j_{k_j,l_j})\cong S_{k_j}$, $top(\Omega(M^j_{k_j,l_j}))\cong S_{\overline{k_j+1}}$ by Lemma \ref{syzygy-top-socle}.
By the definition of $\mathcal{S}_{p,i}$, we have that if $j$ is in $p'(i)$, then $l_j=0$ and therefore the multiplicity of $S_{\overline{k_j+1}}$ in $\Omega(M^j_{k_j,l_j})$ is $d$. By Remark \ref{basic-facts}, we have $\Omega(\mathcal{S}_{p,i})=\mathcal{L}_{p',i}$.
	
	$(2)$ Applying the functor $\Omega^{-1}$ to the equation in $(1)$, we get our desired result.
	
	$(3)$ Since $\Omega^{-1}\colon A_n^{dn}\!-\!\stmod\rightarrow A_n^{dn}\!-\!\stmod$ is a stable equivalence, we have that $\Omega^{-1}(\mathcal{S}_{p,i})$ is also an sms. By Lemma \ref{syzygy-top-socle}, the non-crossing partition corresponding to $\Omega^{-1}(\mathcal{S}_{p,i})$ is exactly $p''=m_2(p)$. We denote by $p(j)$ the block of the non-crossing partition $p$ that the vertex $j$ belongs to for any vertex $j$. Let $p(i)=\{i, k_i, k_i^{(2)},\cdots, k_i^{(s_i-1)}\}$ and $\widehat{j}=\{j,\overline{j+1},\cdots,k_j\}$ for any vertex $j$. Notice that when $k_j$ is an element different from $k_i$ in $p''(k_i)$, we have $\widehat{j}\cap p(i)=\emptyset$. Let $M^j_{k_j,l_j}$ be an element in  $\mathcal{S}_{p,i}$, since
	$soc(M^j_{k_j,l_j})\cong S_{k_j}$,
	$top(\Omega^{-1}(M^j_{k_j,l_j}))\cong S_{k_j}$ by Lemma \ref{syzygy-top-socle}. By the definition of $\mathcal{S}_{p,i}$, we have that if $k_j$ is in $p''(k_i)$, then $l_j=0$ and therefore the multiplicity of $S_{k_j}$ in  $\Omega^{-1}(M^j_{k_j,l_j})$ is $d$. By Remark \ref{basic-facts}, we have $\Omega^{-1}(\mathcal{S}_{p,i})=\mathcal{L}_{p'',k_i}$.
		
$(4)$ Applying the functor $\Omega$ to the equation in $(3)$, we get our desired result.
\end{proof}

\begin{Rem}\label{an-equivalent-formulation}
	Notice that for $p,p'\in \mathcal{P}$ and $1\leq k,k'\leq n$, we have that $\mathcal{S}_{p,k}=\mathcal{S}_{p',k'}$ if and only if $\Omega(\mathcal{S}_{p,k})=\Omega(\mathcal{S}_{p',k'})$, and by Theorem \ref{syzygy}, if and only if $\mathcal{L}_{m_1(p),k}=\mathcal{L}_{m_1(p'),k'}$. By Theorem \ref{sms-description}, for $A_n^{dn}$ and $d\geq2$, $\mathcal{S}_{p,k}=\mathcal{S}_{p',k'}$ if and only if $p=p'$ and $m_1(p)(k)=m_1(p)(k')$.
\end{Rem}

\begin{Ex}
	Consider the symmetric Nakayama algebra $A_2^6$ and the set $\mathcal{P}$ of non-crossing
	partitions of $\{1,2\}$, where $\mathcal{P}=\{p_1,p_2\}$ and $p_1=\{\{1\},\{2\}\}$,
	$p_2=\{\{1,2\}\}$.
	
	By the definitions of $m_1$ and $m_2$, we have $m_1=m_2\colon p_1\mapsto p_2, p_2\mapsto p_1$. For example,
	
	$$\mathcal{S}_{p_2,1}=\left\{\begin{matrix}1\\2\end{matrix} ,~~\begin{matrix}2\\1\\2\\1\\2\\1 \end{matrix} \right \} ,\qquad \mathcal{L}_{p_1,1}=\left \{\begin{matrix}1 \\2\\1\\2\\1\end{matrix},~~2 \right\}.$$
	Obviously, $\Omega(\mathcal{S}_{p_2,1})=\mathcal{L}_{p_1,1}=\Omega^{-1}(\mathcal{S}_{p_2,2}), \Omega^{-1}(\mathcal{L}_{p_1,1})=\mathcal{S}_{p_2,1}=\Omega(\mathcal{L}_{p_1,2})$. Similarly, we have the following:
	
	\begin{gather*}\vspace{-1cm}
	\Omega(\mathcal{S}_{p_2,2})=\mathcal{L}_{p_1,2}=\Omega^{-1}(\mathcal{S}_{p_2,1}), \quad  \Omega(\mathcal{S}_{p_1,1})=\Omega(\mathcal{S}_{p_1,2})=\mathcal{L}_{p_2,2}=\mathcal{L}_{p_2,1}=\Omega^{-1}(\mathcal{S}_{p_1,1}),\\
	\Omega^{-1}(\mathcal{L}_{p_1,2})=\mathcal{S}_{p_2,2}=\Omega(\mathcal{L}_{p_1,1}), \quad \Omega^{-1}(\mathcal{L}_{p_2,1})=\Omega^{-1}(\mathcal{L}_{p_2,2})=\mathcal{S}_{p_1,1}=\mathcal{S}_{p_1,2}=\Omega(\mathcal{L}_{p_2,1}).
	\end{gather*}
\end{Ex}

Similarly, for any self-injective Nakayama algebra $A_n^{\ell}$ and $\mathcal{L}'_{p,k}$ and $\mathcal{S}'_{p,k}$ over $A_n^\ell$, by Theorem \ref{self-sms-description} and Theorem \ref{syzygy}, we have the following result.

\begin{Thm}
	Let $A_n^\ell$ be a self-injective Nakayama algebra and
	$\mathcal{P}$ the set of non-crossing partitions of the set $\underline{e}$, where $e$ is the greatest
	common divisor of $n$ and $\ell$. For $p\in \mathcal{P}$ and $1\leq i\leq e$, let $\mathcal{L}'_{p,i}$ and $\mathcal{S}'_{p,i}$  be defined as in Subsection 4.2. Moreover, let $m_1$ and $m_2$ be bijections over $\mathcal{P}$ defined as in Subsection 5.1, and let $\Omega,~ \Omega^{-1}$ denote the syzygy and cosyzygy functors respectively. Then we have the following.
	
	\begin{enumerate}[$(1)$]
		\item $\Omega(\mathcal{S'}_{p,i})=\mathcal{L'}_{p',i}$, where $p'=m_1(p)$.
		\item $\Omega^{-1}(\mathcal{L'}_{p,i})=\mathcal{S'}_{p'',i}$, where $p''=m_2(p)$.
		\item $\Omega^{-1}(\mathcal{S}'_{p,i})=\mathcal{L}'_{p'',k_i}$, where $k_i$ is defined as in Subsection 5.1 and $p''=m_2(p)$.
		\item $\Omega(\mathcal{L}'_{p,i})=\mathcal{S}'_{p',i_1}$, where $i_1=\overline{k_i+1}$ is defined as in Subsection 5.1 and $p'=m_1(p)$.
	\end{enumerate}
\end{Thm}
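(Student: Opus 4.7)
The plan is to reduce the statement to its symmetric-case analogue (Theorem \ref{syzygy}) via the covering functor $F\colon A_n^\ell\text{-}\stmod\longrightarrow A_e^\ell\text{-}\stmod$ supplied by Lemma \ref{covering-A_{n}}, using the very definitions $\mathcal{L}'_{p,i}=F^{-1}(\mathcal{L}_{p,i})$ and $\mathcal{S}'_{p,i}=F^{-1}(\mathcal{S}_{p,i})$. Since $\Omega$ is a stable auto-equivalence on both sides, everything boils down to a compatibility statement between $\Omega$ and $F$.

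\textbf{Step 1 (key commutation lemma).} First I would verify that $F$ intertwines syzygies, in the form: for every indecomposable non-projective $X$ in $A_n^\ell\text{-}\stmod$,
\[
F(\Omega X)\cong \Omega\,F(X),\qquad F(\Omega^{-1}X)\cong \Omega^{-1}F(X).
\]
This is essentially a statement on the level of stable Auslander--Reiten quivers. The covering $\pi\colon {}_s\Gamma_{A_n^\ell}\to {}_s\Gamma_{A_e^\ell}\cong {}_s\Gamma_{A_n^\ell}/\langle\nu\rangle$ identifies $A_e^\ell\text{-}\stind$ with the orbit mesh category under the Nakayama automorphism $\nu$; since $\Omega$ commutes with $\nu$ (both are stable auto-equivalences, and $\nu$ is a Morita self-equivalence), the $\Omega$-action on ${}_s\Gamma_{A_n^\ell}$ descends to the quotient and coincides with $\Omega$ on $A_e^\ell\text{-}\stmod$. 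Concretely, this can also be checked module-theoretically: an indecomposable $X=M^a_{b,t}$ over $A_n^\ell$ has $\Omega X$ determined by the same top/socle recipe as in Lemma \ref{syzygy-top-socle}, and the covering functor simply reduces the vertex labels modulo $e$.

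\textbf{Step 2 (set-theoretic consequence).} From Step 1, for any Nakayama-stable set $\mathcal{T}$ of objects in $A_e^\ell\text{-}\stmod$ one has $\Omega\bigl(F^{-1}(\mathcal{T})\bigr)=F^{-1}\bigl(\Omega\,\mathcal{T}\bigr)$ and the analogous identity for $\Omega^{-1}$; indeed, if $F(M)\in\mathcal{T}$ then $F(\Omega M)=\Omega F(M)\in\Omega\mathcal{T}$, and conversely every $M'\in F^{-1}(\Omega\mathcal{T})$ is of the form $\Omega M$ for $M=\Omega^{-1}M'\in F^{-1}(\mathcal{T})$.

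\textbf{Step 3 (deduction of (1)--(4)).} Applying the identity of Step 2 together with Theorem \ref{syzygy} then gives each item directly. For instance,
\[
\Omega(\mathcal{S}'_{p,i})=\Omega\bigl(F^{-1}(\mathcal{S}_{p,i})\bigr)=F^{-1}\bigl(\Omega\,\mathcal{S}_{p,i}\bigr)=F^{-1}(\mathcal{L}_{p',i})=\mathcal{L}'_{p',i}
\]
with $p'=m_1(p)$, which is (1); the other three identities follow by the same chain of equalities, using parts (2), (3), (4) of Theorem \ref{syzygy} respectively, with the indices $i_1=\overline{k_i+1}$ and $k_i$ defined from $p$ as in Subsection~5.1 (noting that these indices are attributes of the non-crossing partition $p$ of $\underline{e}$, so they transport unambiguously through $F$).

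\textbf{Main obstacle.} The only non-routine point is Step 1: establishing $F\Omega\cong\Omega F$. Once this commutation is in place, everything else is a formal transport through $F^{-1}$ of the already-proved Theorem \ref{syzygy}. I would expect the cleanest justification to go through the mesh-category / covering-of-translation-quivers picture used in the proof of Lemma \ref{covering-A_{n}}, rather than through direct module calculations with $\Omega$.
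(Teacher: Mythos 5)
Your proposal is correct and takes essentially the same approach as the paper: the paper states the theorem immediately after the sentence ``by Theorem \ref{self-sms-description} and Theorem \ref{syzygy}, we have the following result,'' leaving the transport through the covering functor implicit, while you spell out the needed commutation $F\Omega\cong\Omega F$ and the resulting identity $\Omega(F^{-1}(\mathcal{T}))=F^{-1}(\Omega\mathcal{T})$, which is exactly the mechanism behind the paper's one-line justification.
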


\begin{Ex}
	Consider the self-injective Nakayama algebra $A_4^6$. From the last example, we have $m_1=m_2\colon p_1\mapsto p_2, p_2\mapsto p_1$, where $p_1=\{\{1\},\{2\}\}$ and $p_2=\{\{1,2\}\}$. Similarly, we have the following:
	
	\begin{gather*}
	\Omega(\mathcal{S}'_{p_2,1})=\mathcal{L}'_{p_1,1}=\Omega^{-1}(\mathcal{S}'_{p_2,2}),
	\quad \Omega(\mathcal{S}'_{p_2,2})=\mathcal{L}'_{p_1,2}=\Omega^{-1}(\mathcal{S}'_{p_2,1}). \\
	\Omega^{-1}(\mathcal{L}'_{p_1,1})=\mathcal{S}'_{p_2,1}=\Omega(\mathcal{L}'_{p_1,2}),
	\quad \Omega^{-1}(\mathcal{L}'_{p_1,2})=\mathcal{S}'_{p_2,2}=\Omega(\mathcal{L}'_{p_1,1}).\\
	\Omega(\mathcal{S}'_{p_1,1})=\Omega(\mathcal{S'}_{p_1,2})=\mathcal{L}'_{p_2,2}=\mathcal{L}'_{p_2,1}=\Omega^{-1}(\mathcal{S}'_{p_1,1}),
	\quad
	\Omega^{-1}(\mathcal{L}'_{p_2,1})=\Omega^{-1}(\mathcal{L}'_{p_2,2})=\mathcal{S}'_{p_1,1}=\mathcal{S}'_{p_1,2}=\Omega(\mathcal{L}'_{p_2,1}).
	\end{gather*}
\end{Ex}

\subsection{The number of sms's of Brauer tree algebras}

Let $A_n^{dn}$ be a symmetric Nakayama algebra and $\mathcal{P}$ the set of non-crossing partitions of the set $\underline{n}=\{1, 2,\cdots,n\}$, where $n, d$ are positive integers. We have that the number of sms's in $\{\mathcal{S}_{p,k}|p\in \mathcal{P}, k\in \underline{n}\}$ is equal to the number of sms's in $\{\mathcal{L}_{p,k}|p\in \mathcal{P}, k\in \underline{n}\}$ by Theorem \ref{syzygy}. Since the number of non-crossing partitions of $\underline{n}$ with $k$ blocks is the Narayana number $N(n,k)$ (cf. \cite[Corollary 4.1]{K}), where $N(n,k)=\frac{1}{n}\binom{n}{k}\binom {n}{k-1}$. By Theorem \ref{sms-description}, if $d\geqslant 2$, then the number of sms's in $\{\mathcal{L}_{p,k}|p\in \mathcal{P},  k\in \underline{n}\}$ is equal to $\sum\limits_{k=1}^n kN(n,k)$. Otherwise, the number of elements in $\{\mathcal{L}_{p,k}|p\in \mathcal{P},  k\in \underline{n}\}$ is equal to $\sum\limits_{k=1}^nN(n,k)=C_n$, where $C_n=\frac{1}{n+1}\binom{2n}{n}$ is the Catalan number and it is also the number of non-crossing partitions of the set $\underline{n}$. Now we can easily calculate the number of sms's over any self-injective Nakayama algebra. Actually, it was  already calculated by Riedtmann in \cite{Riedtmann2} and Chan in \cite{Chan}  using their classifications respectively.

\begin{Prop}
	Let $A_n^\ell$ be a self-injective Nakayama algebra and
	$\mathcal{P}$ the set of non-crossing partitions of the set $\underline{e}$, where $e$ is the greatest
	common divisor of $n$ and $\ell$. If $\ell=e$, then the number of sms's over $A_n^\ell$ is the Catalan number $C_e$. If $\ell>e$, then the number of sms's over $A_n^\ell$ is $(e+1)C_e$, where $C_e=\frac{1}{e+1}\binom{2e}{e}$. 		
\end{Prop}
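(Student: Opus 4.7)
The plan is to reduce to the symmetric case via Lemma~\ref{covering-A_{n}} and then count the two types of sms's separately, using the rigidity statements of Theorem~\ref{sms-description}(c).

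First, by Lemma~\ref{covering-A_{n}}, the sms's of $A_n^\ell$ are in bijection with the sms's of $A_e^\ell$. Since $e\mid\ell$, the algebra $A_e^\ell=A_e^{de}$ with $d=\ell/e$ is symmetric, and by Theorem~\ref{self-sms-description}(b) (or equivalently Theorem~\ref{sms-description}(b) for the symmetric case) every sms of $A_e^{de}$ is of the form $\mathcal{L}_{p,k}$ or $\mathcal{S}_{p,k}$ for some $p\in\mathcal{P}$ and some $1\leq k\leq e$. Hence it suffices to count the distinct sms's $\mathcal{L}_{p,k}$ and $\mathcal{S}_{p,k}$ over $A_e^{de}$.

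For the case $\ell=e$, i.e., $d=1$, Remark~\ref{special}(3) implies that $\mathcal{L}_{p,k}=\mathcal{S}_{p,k'}$ is independent of $k,k'$. Thus the distinct sms's are parametrized by $p\in\mathcal{P}$ alone, giving exactly $|\mathcal{P}|=C_e$ sms's.

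For the case $\ell>e$, i.e., $d\geq 2$, I would proceed in three steps. Step 1: Count the $\mathcal{L}_{p,k}$'s. By Theorem~\ref{sms-description}(c)(2), $\mathcal{L}_{p,k}=\mathcal{L}_{p',k'}$ iff $p=p'$ and $p(k)=p(k')$, so for each fixed non-crossing partition $p$ the number of distinct sms's of the form $\mathcal{L}_{p,k}$ is exactly the number of blocks of $p$. Recalling that the number of non-crossing partitions of $\underline{e}$ with exactly $k$ blocks is the Narayana number $N(e,k)=\frac{1}{e}\binom{e}{k}\binom{e}{k-1}$, the total number of $\mathcal{L}_{p,k}$'s is
\[
\sum_{k=1}^{e} k\,N(e,k).
\]
Step 2: Show that this sum equals $\frac{e+1}{2}C_e$. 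I would use the standard identity $\sum_{k=1}^{e}k\,N(e,k)=\binom{2e-1}{e-1}$, combined with $\binom{2e-1}{e-1}=\tfrac{1}{2}\binom{2e}{e}=\tfrac{e+1}{2}C_e$. Step 3: Count the $\mathcal{S}_{p,k}$'s. By Theorem~\ref{syzygy}(1), the syzygy functor induces a bijection $\mathcal{S}_{p,k}\mapsto\Omega(\mathcal{S}_{p,k})=\mathcal{L}_{m_1(p),k}$ between short-type and long-type sms's (since $\Omega$ is a stable equivalence and $m_1$ is a bijection on $\mathcal{P}$), hence the number of distinct $\mathcal{S}_{p,k}$'s is also $\frac{e+1}{2}C_e$. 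Finally, Theorem~\ref{sms-description}(c)(1) gives $\mathcal{L}_{p,k}\neq\mathcal{S}_{p',k'}$ for any choice of parameters, so the total count is
\[
2\cdot\frac{e+1}{2}C_e=(e+1)C_e.
\]

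The main obstacle I anticipate is step 2, the combinatorial identity $\sum_{k=1}^{e}k\,N(e,k)=\tfrac{e+1}{2}C_e$. Everything else is essentially bookkeeping that directly invokes the results already proved in the paper. The identity itself is classical and can be verified either by a routine manipulation of binomial coefficients, by a generating-function argument, or by checking small cases $e=1,2,3$ and giving a combinatorial proof via pointing at a block.
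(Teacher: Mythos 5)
Your proposal is correct and follows essentially the same route as the paper: reduce to the symmetric algebra $A_e^{de}$ via the covering functor, handle $d=1$ by Remark \ref{special}(3), count the long-type sms's as $\sum_{k=1}^e k\,N(e,k)$ via Theorem \ref{sms-description}(c)(2), use the syzygy bijection of Theorem \ref{syzygy} to transfer the count to the short-type sms's, and add the two (using part (c)(1) to avoid overlap). The only cosmetic difference is in the final combinatorial step: the paper pairs terms using the symmetry $N(e,k)=N(e,e-k+1)$ (splitting into $e$ even/odd) to get $2\sum kN(e,k)=(e+1)C_e$ directly, whereas you quote the equivalent closed form $\sum kN(e,k)=\binom{2e-1}{e-1}=\tfrac{1}{2}\binom{2e}{e}=\tfrac{e+1}{2}C_e$; both are routine and interchangeable.
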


\begin{proof}
	Since the number of sms's over  $A_n^\ell$ is equal to the number of sms's over $A_e^\ell$, we just consider it for symmetric Nakayama algebra  $A_e^\ell$.
	
	If $\ell=e$, then by Theorem \ref{sms-description} and Remark \ref{special} we have that the number of sms's over $A_e^\ell$ is equal to the number of non-crossing partitions of the set $\underline{e}$, that is, the Catalan number $C_e$.
	
	If $\ell>e$, then by Theorem \ref{sms-description}, \ref{self-sms-description} and the consideration above we have that the number of sms's over $A_e^\ell$ is $2\sum\limits_{k=1}^e kN(e,k)$, where $N(e,k)$ is the Narayana number. Notice that $N(e,k)=N(e,e-k+1)$.
	
	When $e$ is even, we have the following equation:
	
	\begin{align*}
	2\sum\limits_{k=1}^e kN(e,k)&=2\sum\limits_{k=1}^{\frac{e}{2}} \{kN(e,k)+(e-k+1)N(e,e-k+1)\} \\
	&=2(e+1)\sum\limits_{k=1}^{\frac{e}{2}}N(e,k) \\
	&=(e+1)C_e.
	\end{align*}
	
	When $e$ is odd, we have the following equation:

\begin{align*}
2\sum\limits_{k=1}^e kN(e,k)&=2\sum\limits_{k=1}^{\frac{e-1}{2}} \{kN(e,k)+(e-k+1)N(e,e-k+1)\}+(e+1)N(e,\frac{e+1}{2}) \\
&=2(e+1)\sum\limits_{k=1}^{\frac{e-1}{2}}N(e,k)+(e+1)N(e,\frac{e+1}{2}) \\
&=(e+1)\{C_e-N(e,\frac{e+1}{2})\}+(e+1)N(e,\frac{e+1}{2})\\
&=(e+1)C_e.
\end{align*}

The result follows directly from the above.
\end{proof}

\begin{Cor} Let $B=B(T)$ be a Brauer tree algebra defined by a Brauer tree $T$ with $n$ edges such that the multiplicity of the exceptional vertex of $T$ is $m_0$ (For the definition of Brauer tree algebra, we refer to \cite[Section 2]{S}). Let
	$\mathcal{P}$ be the set of non-crossing partitions of the set $\underline{n}$. If $m_0=1$, then the number of sms's over $B$ is the Catalan number $C_n$. If $m_0>1$, then the number of sms's over $B$ is $(n+1)C_n$, where $C_n=\frac{1}{n+1}\binom{2n}{n}$.
\end{Cor}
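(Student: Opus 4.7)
The plan is to reduce the statement to the previous proposition via a stable equivalence. It is a classical result of Rickard that any two Brauer tree algebras with the same number of edges and the same multiplicity at the exceptional vertex are derived equivalent. In particular, the Brauer tree algebra $B=B(T)$ with $n$ edges and exceptional multiplicity $m_0$ is derived equivalent to the Brauer star algebra with $n$ edges and the same multiplicity, which as a basic algebra is exactly the symmetric Nakayama algebra $A_n^{m_0 n}$. Since both algebras are symmetric, this derived equivalence induces a stable equivalence of Morita type between $B$ and $A_n^{m_0 n}$.

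Next I would invoke the fact that stable equivalences of Morita type preserve simple-minded systems: such a functor sends an orthogonal system to an orthogonal system, preserves the cardinality of a family and, for symmetric algebras, commutes (up to isomorphism) with the Nakayama functor, so via the characterization of Theorem \ref{main-thm} the induced bijection on isomorphism classes of non-projective indecomposables restricts to a bijection between sms's of $B$-$\stmod$ and sms's of $A_n^{m_0 n}$-$\stmod$. Hence it suffices to count sms's over the symmetric Nakayama algebra $A_n^{m_0 n}$.

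Finally I apply the preceding proposition with $\ell=m_0 n$ and $e=\gcd(n,m_0 n)=n$. In the case $m_0=1$ we have $\ell=e=n$, so the number of sms's over $A_n^n$ equals the Catalan number $C_n$. In the case $m_0>1$ we have $\ell=m_0 n>n=e$, so the number of sms's over $A_n^{m_0 n}$ equals $(n+1)C_n$. Transporting these counts back along the bijection of the previous paragraph yields the claimed formulas for $B(T)$.

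The only nontrivial step is the appeal to the invariance of sms's under stable equivalences of Morita type between symmetric algebras; everything else is a direct application of Rickard's derived equivalence classification of Brauer tree algebras together with the enumeration established in the previous proposition. I expect the main (essentially bookkeeping) obstacle to be verifying carefully that the stable equivalence respects the three conditions in Theorem \ref{main-thm}, but this is standard for stable equivalences of Morita type between self-injective algebras.
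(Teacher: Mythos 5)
Your proposal is correct and takes essentially the same route as the paper: it reduces the count over $B(T)$ to the count over the symmetric Nakayama algebra $A_n^{m_0 n}$ via a stable equivalence and then invokes the preceding proposition. The one small simplification you could make is that you do not need stable equivalences of Morita type or the characterization in Theorem \ref{main-thm}: since an sms is defined intrinsically in terms of the stable module category (Hom spaces and the generation condition), any equivalence of stable module categories sends sms's to sms's, which is all the paper appeals to.
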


\begin{proof} This is a direct consequence of the fact that the Brauer tree algebra $B$ is stably equivalent to the symmetric Nakayama algebra $A_n^{nm_0}$ and the fact that sms's are invariant under a stable equivalence.
\end{proof}

Our short-type and long-type sms's correspond exactly with Chan's bottom-type and top-type configurations defined in \cite{Chan}. This can be observed by Chan's cutting-off procedure on configurations and by the fact that the (co)syzygy functors interchange the types of sms's, and we leave the details to the interested reader.

\bigskip

\end{document}